\documentclass[11pt]{article}

\newtheorem{theorem}{Theorem}[section]
\newtheorem{lemma}[theorem]{Lemma}
\newtheorem{proposition}[theorem]{Proposition}
\newtheorem{corollary}[theorem]{Corollary}

\newenvironment{proof}[1][Proof]{\begin{trivlist}
		\item[\hskip \labelsep {\bfseries #1}]}{\end{trivlist}}
\newenvironment{definition}[1][Definition]{\begin{trivlist}
		\item[\hskip \labelsep {\bfseries #1}]}{\end{trivlist}}

\newenvironment{remark}[1][Remark]{\begin{trivlist}
		\item[\hskip \labelsep {\bfseries #1}]}{\end{trivlist}}

\newcommand{\qed}{\nobreak \ifvmode \relax \else
	\ifdim\lastskip<1.5em \hskip-\lastskip
	\hskip1.5em plus0em minus0.5em \fi \nobreak
	\vrule height0.75em width0.5em depth0.25em\fi}
\usepackage[margin=1in,includefoot]{geometry}

\usepackage{setspace}
\usepackage{comment}
\usepackage{tikz}
\usetikzlibrary{calc,patterns}
\usepackage{tikz-network}

\usepackage{cancel}

\usepackage{placeins}

\usepackage[utf8]{inputenc}

\usepackage{blindtext}
\usepackage{todonotes}

\usepackage{graphicx}
\usepackage{float}

\usepackage{xfrac}

\usepackage{fancyhdr}

\usepackage{mathtools}
\usepackage{nccmath}

\usepackage{relsize}

\usepackage{bm}

\usepackage{titling}
\pretitle{\begin{center}\Huge\bfseries}
	\posttitle{\par\end{center}\vskip 0.5em}
\preauthor{\begin{center}\Large\ttfamily}
	\postauthor{\end{center}}
\predate{\par\large\centering}
\postdate{\par}

\title{A Structural Characterization of Determinantally Equivalent Functions}
\author{
	Harry Sapranidis Mantelos\\ [1mm]
	\small The Hebrew University of Jerusalem\\
	\small \texttt{Charalampos.Sapranidis-mantelos@mail.huji.ac.il}
}
\date{\today}

\usepackage{bbm}

\usepackage{relsize}

\usepackage{mathtools}
\usepackage{csquotes}
\usepackage{mathrsfs}
\usepackage{amsbsy}

\usepackage{amssymb}

\usepackage{enumerate}
\usepackage{url}

\newcommand{\vertiii}[1]{{\left\vert\kern-0.25ex\left\vert\kern-0.25ex\left\vert #1 
		\right\vert\kern-0.25ex\right\vert\kern-0.25ex\right\vert}}

\allowdisplaybreaks
\begin{document}
	
	\maketitle
	\thispagestyle{empty}

	\begin{abstract}
		Let $\Lambda$ be a set and $\mathbb{F}$ a field. Suppose that $K,Q:\Lambda^2\to\mathbb{F}$ are two functions such that for any $n\in\mathbb{N}$ and $x_1,x_2,\ldots,x_n\in\Lambda$, the determinants of matrices $(K(x_i,x_j))_{1\leq i,j\leq n}$ and $(Q(x_i,x_j))_{1\leq i,j\leq n}$ agree. We study to what extent $K$ and $Q$ must be related by two canonical transformations corresponding to diagonal similarity and transposition.
		
		In the symmetric case, this relation holds without further assumptions (see Stevens (2021), \cite{equiv_symm_kernels_for_dpps}), while in general it fails. In Mantelos (2026), \cite{mantelos2026determinantally}, it was shown that the relation remains valid under a natural $2\times 2$ determinantal condition (property $\mathcal{D}$), together with the additional assumption that both functions are nowhere vanishing.
		
		We prove that the `nowhere vanishing' assumption can be removed entirely, and that property $\mathcal{D}$ alone provides the correct and complete structural mechanism governing the problem. In particular, this shows that the nowhere-zero assumption is not intrinsic to the problem, but rather an artefact of the specific method used in \cite{mantelos2026determinantally}.
		
		The proof is entirely combinatorial and avoids linear algebra, relying on an analysis of permutations in the definition of a determinant as cycles in a graph; in particular, it requires new arguments to handle the breakdown of the identities used in \cite{mantelos2026determinantally}, which are crucial to the method therein. In the `finite $\Lambda$' case, this also yields a new approach to the classical matrix problem of Loewy (1986), \cite{loewy1986principal}, thereby revealing an underlying combinatorial structure.
	\end{abstract}
	\tableofcontents
	\section{Introduction and Main Result}
	
	Let $\Lambda$ be an abstract set (of arbitrary cardinality) and let $\mathbb{F}$ be an arbitrary field. Suppose that $K,Q:\Lambda^2\to\mathbb{F}$ are two \textit{determinantally equivalent} functions, that is,
	\begin{equation}
		\label{equal_determinants_relation}
		\det(Q(x_i,x_j))_{i,j=1}^n=\det(K(x_i,x_j))_{i,j=1}^n\qquad \forall x_1,\ldots,x_n\in\Lambda\enspace\forall n\in\mathbb{N}.
	\end{equation}
	We study to what extent such functions must be related by the following two canonical transformations:
	\begin{enumerate}
		\item \textbf{Conjugation transformations}, where
		\begin{equation}
			\label{conjugation_transformation}
			Q(x,y) = g(x) K(x,y) g(y)^{-1}\qquad \forall x,y\in\Lambda,
		\end{equation}
		for some nowhere zero function $g:\Lambda\to\mathbb{F}$; and
		\item \textbf{Transposition transformations}, where
		\begin{equation}
			\label{transposition_transformation}
			Q(x,y)= K(y,x)\qquad \forall x,y\in\Lambda.
		\end{equation}
	\end{enumerate}
	It is clear that both of the above transformations preserve \eqref{equal_determinants_relation}; the paper therefore focuses on the extent to which \eqref{equal_determinants_relation} implies \eqref{conjugation_transformation} and/or \eqref{transposition_transformation}. In the case where the underlying `set $\Lambda$' is finite, this problem actually reduces to the classical question in linear algebra concerning principal minors and diagonal similarity of matrices; cf. \cite{loewy1986principal}. In this setting, the present paper provides a new combinatorial approach to this classical matrix problem. The functional formulation was first introduced in \cite{equiv_symm_kernels_for_dpps}, where it was motivated by determinantal point processes. However, both in that work and in the present paper, the problem is formulated and studied in a purely deterministic setting, independent of any probabilistic interpretation.
	
	In \cite{equiv_symm_kernels_for_dpps}, the problem was studied under the additional assumption that $K$ and $Q$ are symmetric, in which case a transposition transformation plays no role. More precisely, it was shown that determinantally equivalent symmetric functions are necessarily related by conjugation transformations. This extends, in the finite setting (i.e., in the setting where the underlying `set $\Lambda$' is finite), a result for (real or complex) symmetric matrices with equal corresponding principal minors; cf. \cite{kulesza2012learning}. Related extensions to (complex) Hermitian matrices can be found in \cite{launay2021determinantal}.
	
	The more general -- not necessarily symmetric -- setting is more subtle. As explained in \cite{mantelos2026determinantally}, without further assumptions on $K$ and $Q$ the conclusion fails in general, as demonstrated by standard counterexamples involving block matrices where only certain blocks get transposed. To preclude such pathologies, a natural structural condition -- referred to as property $\mathcal{D}$ -- was introduced:
	
	\begin{definition}
		We say that a function $h:\Lambda^2\to\mathbb{F}$ is of \textit{class $\mathcal{D}$} if for every pairwise distinct $x,y,z,w\in\Lambda$,
		\begin{equation}
			\label{property_D}
			\begin{vmatrix}
				h(x,y) & h(x,z) \\
				h(w,y) & h(w,z)
			\end{vmatrix} \neq 0.
		\end{equation}
	\end{definition}
	
	Roughly speaking, property $\mathcal{D}$ imposes a non-degeneracy condition that prevents the kind of local symmetries that give rise to counterexamples; e.g., $\Lambda=\{1,2,\ldots,n\}$ and functions/matrices $K,Q:\Lambda^2\to\mathbb{F}$ such that
	
	\[(K(x,y))_{1\leq x,y\leq n}=
	\renewcommand\arraystretch{1.5}
	\left[\begin{array}{@{}c|c@{}}
		\mbox{\Large C} & \mbox{\Large 0} \\
		\hline
		\mbox{\Large 0} & \mbox{\Large D}
	\end{array}\right], \qquad (Q(x,y))_{1\leq x,y\leq n}=\left[\begin{array}{@{}c|c@{}}
		\mbox{\Large $C^{T}$} & \mbox{\Large $0$} \\
		\hline
		\mbox{\Large $0$} & \mbox{\Large $D$}
	\end{array}\right],
	\]
	
	in which case $K$ and $Q$ \textit{are} determinantally equivalent but are \textit{not} related by either conjugation transformations or transposition transformations. Instead, $K$ and $Q$ are related through this type of `\textit{partial transposition transformation}' where only one block gets transposed. Condition $\mathcal{D}$ rules out such counterexamples as it requires non-zero determinants in the upper-right and lower-left sub-matrix regions. 
	
	Under condition $\mathcal{D}$, it was shown in \cite{mantelos2026determinantally} that if $K$ and $Q$ are determinantally equivalent and nowhere-vanishing up to the set $\{(x,x):x\in\Lambda\}$, then they are in fact related by conjugation and transposition transformations.
	
	The presence of the nowhere-zero assumption raises a natural question: is it an essential part of the phenomenon, or merely a technical device used to facilitate/enable the specific argument in \cite{mantelos2026determinantally}? The main result of this paper shows that it is not essential -- property $\mathcal{D}$ alone provides the underlying structural mechanism.
	
	%Under condition $\mathcal{D}$, it was proven in \cite{mantelos2026determinantally} that if $K$ and $Q$ are determinantally equivalent and nowhere-vanishing up to the set $\{(x,x):x\in\Lambda\}$, then they are related by conjugation and transposition transformations. We remark that property $\mathcal{D}$ was not fully exploited in \cite{mantelos2026determinantally}, where it played a role only at the very final stage of the argument. This suggests that its true structural significance is not fully captured by that paper and goes far beyond its original use. The purpose of the present paper is to make this precise: we show that the nowhere-zero assumption is not essential, and that property $\mathcal{D}$ alone provides the correct structural framework. More precisely, our main result is thus.
	
	\begin{theorem}
		\label{main_thm}
		Let $\Lambda$ be a set, $\mathbb{F}$ a field, and let $K,Q:\Lambda^2\to\mathbb{F}$ be two functions. Suppose further that $K$ and $Q$ are of class $\mathcal{D}$. If $K$ and $Q$ are determinantally equivalent, then $Q$ can be transformed into $K$ through only conjugation and transposition transformations.
	\end{theorem}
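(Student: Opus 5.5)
We pass to cycle data and then reduce to small subsets of $\Lambda$. Expanding $\det(K(x_i,x_j))$ as a sum over permutations, and decomposing each permutation into cycles, we compare the two sides by induction on $n$. From $n=1$ we get $K(x,x)=Q(x,x)$. From $n=2$ we get $K(x,y)K(y,x)=Q(x,y)Q(y,x)$ for all distinct $x,y$. From $n=3$, after cancelling the lower-order terms, we get the cycle-sum identity
\[
K(x,y)K(y,z)K(z,x)+K(x,z)K(z,y)K(y,x)=Q(x,y)Q(y,z)Q(z,x)+Q(x,z)Q(z,y)Q(y,x).
\]
Together with the $2$-cycle products, this says that for each triple the unordered pair $\{C_K(x,y,z),C_K(x,z,y)\}$ of directed $3$-cycle values equals the corresponding pair for $Q$, since both pairs have the same sum and the same product. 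More generally, induction on $n$ yields identities relating the sums of directed cycle values on $k$ points for $k=4$ as well.

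The target is a global \emph{orientation} statement. Either $C_Q(x,y,z)=C_K(x,y,z)$ for every triple, or $C_Q(x,y,z)=C_K(x,z,y)$ for every triple. Moreover, the same choice must also hold for all longer cycles and for the ``zero patterns''. Granting this, we replace $K$ by $K^T$ in the second case, so that $Q$ and $K$ have the same value on every directed cycle. We then construct $g$ by a spanning-tree argument on the graph whose edges are the pairs $\{x,y\}$ with $K(x,y)\neq0$ (equivalently $Q(x,y)\ne0$, by the $2$-cycle identity when both directions are nonzero). Fix a root, and define $g$ along tree paths so that $Q(x,y)=g(x)K(x,y)g(y)^{-1}$ on tree edges. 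Equality of all directed cycle values then propagates this relation to every nonzero entry, and entries that are zero are zero for both $K$ and $Q$. For infinite $\Lambda$ we do this on each component (via Zorn, or simply by choosing a root and a path in each component), and set $g=1$ on isolated points.

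The main obstacle is twofold. First, we need the local choice of orientation at a triple to be consistent across overlapping triples and quadruples. Second, we need to handle zeros, since the cited paper's identities break down when entries vanish. My plan is to use property $\mathcal{D}$ as follows. For four distinct points $x,y,z,w$, $\mathcal{D}$ forbids $K(x,y)=K(x,z)=0$ with $w$ arbitrary; more generally, each $2\times2$ off-diagonal block has a nonzero entry in every row and column pattern that matters. This guarantees that in any $4$-set there are enough nonzero edges that the $4$-cycle identity is non-degenerate. Concretely, I would first show that whenever a triple is \emph{ambiguous}, i.e.\ $C_K(x,y,z)=C_K(x,z,y)$ (possibly both zero), the choice there is irrelevant. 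For non-ambiguous triples sharing two points, I would show the choices agree. To do so, I would suppose they disagree, expand the $4\times4$ determinant identity on $\{x,y,z,w\}$, and derive that some $2\times2$ minor $K(x,y)K(w,z)-K(x,z)K(w,y)$ vanishes, contradicting $\mathcal{D}$; this mirrors the block-transposition counterexample that $\mathcal{D}$ is designed to exclude. Connectivity of the ``triples sharing two points'' relation on $\Lambda$ (at least $4$ points; small $\Lambda$ is checked directly) then gives a global orientation.

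Finally, from consistent $3$-cycles I would deduce equality of longer cycle values. The route is to write a $k$-cycle value as a ratio of products of $3$-cycle and $2$-cycle values along a fan triangulation, which is valid whenever the relevant chords are nonzero. Property $\mathcal{D}$ is used again here to choose a fan apex with nonzero chords. This is the most delicate bookkeeping, and the place where I expect the zero entries to require a separate combinatorial case analysis.
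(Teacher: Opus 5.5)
Your outline follows the paper's architecture. The $n\le 3$ identities put every $3$-cycle in Case~1 or Case~2, property $\mathcal{D}$ makes the Case uniform on $4$-point sets and then globally, and finally a conjugating $g$ is built. However, the steps that carry the content are only announced, and as described they would not go through.

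For local consistency, ``expand the $4\times4$ identity and extract a vanishing $2\times2$ minor'' assumes that the $4$-cycle terms of that identity can be rewritten through $3$- and $2$-cycle values. That needs nonzero chords, which is exactly what fails once zeros are allowed. The paper first extracts the zero structure from $\mathcal{D}$:
\begin{itemize}
\item at most one off-diagonal zero per row and per column (Lemma~\ref{prop_one_zero_implies_many_nonzeros});
\item a zero edge of a triangle in only one Case forces $Q$ to vanish on the same edge (Case~1) or on its reverse (Case~2);
\item a triangle in both Cases with $K[p]=0$ has exactly one doubly-zero edge;
\item a Case-1-only and a Case-2-only triangle cannot share a zero edge (Corollary~\ref{cor_impossible_zeroedge_shared_w_Case1_Case2}).
\end{itemize}
Only then does a case analysis close Lemmas~\ref{prop_step_2.1_lambda_4_elements} and~\ref{prop_step_2.1_lambda_4_elements2}, using the zero-tolerant identities of Lemma~\ref{graph_lemma_1} and the $4$-point identities of the earlier paper. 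A $\mathcal{D}$-violation appears only in some branches; the others end in a contradiction with non-ambiguity, or show directly that the remaining triangle has the required Case. You supply none of this.

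Your connectivity step also transmits the orientation only through non-ambiguous triples. In a chain $T_1,R,T_2$ with $R$ in both Cases, the $4$-point statement alone does not force $T_1$ and $T_2$ to agree, so an extra argument is needed. The paper's ``transitivity'' step in Section~\ref{sec_strategy} passes over the same point.

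The passage to $g$ has two further problems.
\begin{itemize}
\item Along an undirected spanning tree, some edges have $K(x,y)\neq0=K(y,x)$. Fundamental cycles then run against zero entries, where equality of directed cycle values carries no information. The paper instead extends the ratio to all pairs: the reversed ratio $(Q(y,x)/K(y,x))^{-1}$ on one-way zeros, and a detour through a third point on two-way zeros. It then checks only $3$-cycles. Well-definedness of the detour (Lemma~\ref{lemma_consistency_of_expressions}) is itself a $4$-cycle statement.
\item More seriously, ``consistent $3$-cycles imply equal longer cycles via a fan with nonzero chords'' is false under $\mathcal{D}$. Your remark that the choice at an ambiguous triple is irrelevant is exactly what breaks. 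On $\Lambda=\{1,2,3,4\}$ let $K(1,2)=K(2,1)=K(3,4)=K(4,3)=0$, $K(2,4)=2$, $K(4,2)=3$, all other off-diagonal entries $1$, and $Q=K^{T}$. Both are of class $\mathcal{D}$ and determinantally equivalent. Every triangle contains a doubly-zero pair, so all triangles lie in both Cases. Yet $K[(1,3,2,4,1)]=2\neq3=Q[(1,3,2,4,1)]$, so $Q$ is a conjugate of $K^{T}$ and not of $K$. Since both chords of this $4$-cycle vanish, no triangle detects this, and the orientation must be read off the $4$-cycle itself. The paper handles this configuration with a fifth point $\xi$ (Lemma~\ref{graph_lemma_2}), so $|\Lambda|=4$ needs separate treatment there as well.
\end{itemize}

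Finally, ``small $\Lambda$ is checked directly'' cannot work. For $2\le|\Lambda|\le3$ property $\mathcal{D}$ is vacuous and the statement fails: take $K(1,2)=1$, all other values of $K$ zero, and $Q\equiv0$. So $|\Lambda|\ge4$ must be assumed, as the paper tacitly does.
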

	
	We remark further that property $\mathcal{D}$ was not fully exploited in \cite{mantelos2026determinantally}, where it appeared only at the final stage of the argument. This suggests that its true structural significance is not fully captured by that paper and goes far beyond its original use. The present work makes its structural role precise.
	
	While the overall strategy is inspired by \cite{mantelos2026determinantally}, the techniques developed here are substantially different. In particular, the identities used in that work -- central to the analysis -- are no longer applicable in the presence of zero values. As in that paper, and in contrast to classical approaches to Loewy’s problem \cite{loewy1986principal} relying on linear-algebraic machinery, our approach is entirely combinatorial: the proof is based on analyzing permutations in the definition of the determinant as cycles in a graph. From this cycle structure, we naturally construct two explicit bivariate functions $S$ and $\tilde{S}$ (given in \eqref{eq:def_S} and \eqref{eq:def_S_tilde}, respectively), which capture the dichotomy between the two structural cases of the problem. The main step is to establish that all $3$-cycles fall into a single category, namely either Case 1 or Case 2 in the terminology of \cite{mantelos2026determinantally}. Under this classification, the argument splits according to the two cases: In the Case 1 situation, we show that $S$ satisfies the cocycle property (defined precisely in Section~\ref{sec_notations_graph_id}), thereby yielding conjugation equivalence \eqref{conjugation_transformation}; and in the Case 2 situation, $\tilde{S}$ is shown to be a cocycle function, thereby yielding transposition equivalence \eqref{transposition_transformation}, up to conjugation. A central part of the argument is a careful analysis of the constraints imposed by property $\mathcal{D}$ on $3$-cycles, which ultimately reveals it as the genuine structural condition governing the problem.

	The paper is organized as follows. In Section~\ref{sec_notations_graph_id}, besides recalling the notations and terminology from \cite{mantelos2026determinantally}, we introduce several more together with a graphical framework adapted to the presence of vanishing values. Section~\ref{sec_basic_results} recalls background results from \cite{equiv_symm_kernels_for_dpps} and \cite{mantelos2026determinantally}. Section~\ref{sec_strategy} outlines the strategy of our proof of Theorem~\ref{main_thm}. Section~\ref{sec_further_identities} develops new cycle-based identities capable of handling vanishing values, which may be viewed as extensions of certain identities from Section 5 of \cite{mantelos2026determinantally}, to the setting with zeros. Section~\ref{sec_implications_on_D_class_assumption} analyzes the implications of the `class $\mathcal{D}$' assumption, establishing several structural properties crucial to our arguments later on. Finally, utilizing all the tools developed in the previous sections, Section~\ref{sec_proof_of_main_result} contains our proof of Theorem~\ref{main_thm}.
	
	\section*{Acknowledgements}
	This research was supported by the Israel Science Foundation grant 305/25.
	
	\section{Preliminaries}
	\subsection{Notation \& Graphical Framework}
	\label{sec_notations_graph_id}
	
	Throughout the paper, we adopt the notation and terminology introduced in \cite{mantelos2026determinantally}. For the convenience of the reader, we briefly recall here the main definitions and conventions that will be used. Afterwards, we introduce some additional notations adapted to accommodate the presence of zero values in the codomain of the functions under consideration. 
	
	This additional level of generality necessitates new tools for tracking structural behaviour. To this end, we also introduce a graphical framework that allows us to represent diagrammatically the vanishing and non-vanishing of $K(x,y)$ across pairs $(x,y)\in\Lambda^2$. This new graphical identification will play a key role in clarifying and guiding several of the arguments developed later in the paper.
	
	\begin{definition}
		Let $\mathcal{M}$ be a (possibly infinite) set. For an integer $1\leq n\leq |\mathcal{M}|$, we define a \textit{cycle of length $n$ in $\mathcal{M}$} (or an \textit{$n$-cycle in $\mathcal{M}$}, for short) to be an $(n+1)$-tuple of the form $p=(p_0,p_1,\ldots,p_n)\in\mathcal{M}^{n+1}$, where each of the $p_i$ are distinct except for $p_0=p_n$. We will refer to the $p_i$ as the \textit{vertices of the cycle $p$}.
	\end{definition}
	
	\begin{definition}
		Let $\mathcal{M}$ be a (possibly infinite) set and fix an integer $1\leq n\leq |\mathcal{M}|$. Given an $n$-cycle $p=(p_0,p_1,\ldots,p_n)$ in $\mathcal{M}$, we denote by $p'\coloneqq (p_n,p_{n-1},\ldots,p_0)$ the \textit{cycle $p$ in reverse}.
	\end{definition}
	
	\begin{definition}
		Let $\Lambda$ be a (possibly infinite) set and let $\mathbb{F}$ be a field. Fix an integer $1\leq n\leq|\Lambda|$. For a bivariate function $h:\Lambda^2\to\mathbb{F}$ and an $n$-cycle, $p=(p_0,\ldots,p_n)$, in $\Lambda$, we denote by $h[p]$ the product
		\begin{equation*}
			h[p]\coloneqq\prod_{i=1}^n h(p_{i-1},p_i);
		\end{equation*}
		and by $h'[p]$ the analogous product with respect to the reverse cycle $p'$:
		\begin{equation*}
			h'[p]\coloneqq\prod_{i=1}^n h(p_i,p_{i-1}) \enspace(=h[p']).
		\end{equation*}
	\end{definition}
	
	Next, we recall a central definition regarding $3$-cycles that was introduced in \cite{mantelos2026determinantally}.
	
	\begin{definition}
		Let $\Lambda$ be a set, $\mathbb{F}$ a field, and let $K,Q:\Lambda^2\to\mathbb{F}$ be two functions. We say that a $3$-cycle, $p=(p_i)_{i=0}^3$, in $\Lambda$ belongs to Case 1 or Case 2 if
		\begin{equation*}
			\text{\textbf{Case 1:} } K[p]=Q[p] \text{ and } K'[p]=Q'[p];
		\end{equation*}
		or
		\begin{equation*}
			\text{\textbf{Case 2:} } K[p]=Q'[p] \text{ and } K'[p]=Q[p],
		\end{equation*}
		respectively.
	\end{definition}
	
	\begin{comment}	
		\begin{definition}
			Let $\Lambda$ be a set, $\mathbb{F}$ a field, and let $K,Q:\Lambda^2\to\mathbb{F}$ be two functions. Let $p=(p_i)_{i=0}^3$ be a $3$-cycle in $\Lambda$.
			\begin{enumerate}[\itshape(i)]
				\item Suppose $p$ belongs to Case 1. We refer to an edge $(p_{j-1},p_j)$ of $p$ as a \textit{zero-edge} if $K(p_{j-1},p_j)=0$; and we refer to it as a \textit{nonzero-edge} if and only if $K(p_{j-1},p_j)\neq0$.
				\item Suppose $p$ belongs to Case 2. We refer to an edge $(p_{j-1},p_j)$ of $p$ as a \textit{zero-edge} if and only if $K(p_j,p_{j-1})=0$; and we refer to it as a \textit{nonzero-edge} if $K(p_j,p_{j-1})\neq0$.
			\end{enumerate}
		\end{definition}
		
		\begin{remark}
			The above definition is well-defined. Indeed, let $p$ and $q$ be two distinct $3$-cycles in $\Lambda$ that share a common edge, say $(x,y)\in\Lambda^2$. This means that $p=(p_*,x,y,p_*)$ and $q=(q_*,x,y,q_*)$ for some $p_*,q_*\in\Lambda$ distinct. Now suppose that $K(x,y)=0$ and that $p$ belongs to Case 1 and $q$ to Case 2, in which case $(x,y)$ is a zero-edge according to the previous definition.  
		\end{remark}
	\end{comment}
	
	Finally, we introduce and explain the graphical framework mentioned earlier that aids in keeping track of the zero and non-zero values of the function $K$: Fix $(x,y)\in\Lambda^2$.
	\begin{itemize}
		\item If $K(x,y)=0$, we label the edge $(x,y)$ with `$0$'.
		\item If $K(x,y)\neq 0$, we label the edge $(x,y)$ with `$\neg 0$'.
	\end{itemize}
	For instance, suppose we were analyzing the $3$-cycle $p=(p_i)_{i=0}^3$ in $\Lambda$ and it were the case that $K(p_0,p_1)=0$, $K(p_1,p_2)=0$ and $K(p_2,p_0)\neq 0$. Diagrammatically we would represent this setup as follows:
	\begin{figure}[H]
		\centering
		\begin{tikzpicture}
			\Vertex[label=$p_2$]{A} \Vertex[x=6,label=$p_1$]{B}, \Vertex[x=3,y=3,label=$p_0$]{C} 
			\Edge[Direct,color=black,label=$\neg 0$](A)(C)
			\Edge[Direct,color=black,label=0](C)(B)
			\Edge[Direct,color=black,label=0](B)(A)
		\end{tikzpicture}
	\end{figure}
	
	Finally, we recall the labelling of the $4$-cycles and $3$-cycles in $\mathcal{M}\coloneqq\{1,2,3,4\}$ from \cite{mantelos2026determinantally}:
	\begin{equation}
		\label{four_cycle_labellings}
		q^{[1]}\coloneqq (1,2,3,4,1),\qquad q^{[2]}\coloneqq (1,2,4,3,1),\qquad q^{[3]}\coloneqq (1,3,2,4,1).
	\end{equation}
	\begin{equation}
		\label{three_cycle_labellings}
		p^{(1)}\coloneqq (1,2,3,1),\qquad p^{(2)}\coloneqq (1,2,4,1),\qquad p^{(3)}\coloneqq (1,4,3,1),\qquad p^{(4)}\coloneqq (2,3,4,2).
	\end{equation}
	We will be consistent with the above labelling of cycles throughout the paper.
	
	\subsection{Background results}
	\label{sec_basic_results}
	
	In this section, we recall the necessary background results that are used throughout the paper, taken from \cite{mantelos2026determinantally} and \cite{equiv_symm_kernels_for_dpps}.
	
	There are two simple -- but very useful -- relations between determinantally equivalent functions arising from equation (\ref{equal_determinants_relation}) with $n=1,2$, that were used extensively in \cite{mantelos2026determinantally}. We collect them in the following lemma. 
	
	\begin{lemma}
		\label{lemma_det_equivalence_w_easy_consequence}
		Let $K,Q:\Lambda^2\to\mathbb{F}$ be a pair of determinantally equivalent functions. Then,
		\begin{enumerate}[(i)]
			\item $K(x,x)=Q(x,x)$ for every $x\in\Lambda$.
			\item $K(x,y)K(y,x)=Q(x,y)Q(y,x)$ for every $x,y\in\Lambda$.
		\end{enumerate}
	\end{lemma}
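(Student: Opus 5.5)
The plan is to specialize the determinantal identity \eqref{equal_determinants_relation} to $n=1$ and $n=2$ and expand each determinant directly; no structural assumption on $K$ or $Q$ (in particular neither property $\mathcal{D}$ nor any nonvanishing condition) is needed.

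For (i), fix $x\in\Lambda$ and take $n=1$ with $x_1=x$. The $1\times 1$ matrices are $(K(x,x))$ and $(Q(x,x))$, whose determinants are their single entries. Hence \eqref{equal_determinants_relation} reads $Q(x,x)=K(x,x)$ directly.

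For (ii), fix $x,y\in\Lambda$. If $x=y$, the claim reduces to $K(x,x)^2=Q(x,x)^2$, which follows from (i). Otherwise, take $n=2$ with $x_1=x$ and $x_2=y$, and expand the $2\times 2$ determinants. Equation \eqref{equal_determinants_relation} then gives
\[
Q(x,x)Q(y,y)-Q(x,y)Q(y,x)=K(x,x)K(y,y)-K(x,y)K(y,x).
\]
By (i), the diagonal products $Q(x,x)Q(y,y)$ and $K(x,x)K(y,y)$ coincide. Cancelling them leaves $Q(x,y)Q(y,x)=K(x,y)K(y,x)$.

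The only point needing any care is that the cancellation is pure additive subtraction in $\mathbb{F}$, so no division by possibly-zero values occurs. There is no genuine obstacle; the statement is an immediate consequence of the $n=1,2$ cases of \eqref{equal_determinants_relation}.
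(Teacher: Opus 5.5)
Your proposal is correct and is the paper's proof: the paper simply says both parts follow directly from \eqref{equal_determinants_relation} with $n=1$ and $n=2$, which is exactly the expansion and cancellation you carry out. Treating $x=y$ separately via (i) is a harmless extra detail.
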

	\begin{proof}
		This is an easy and direct consequence of equation (\ref{equal_determinants_relation}) with $n=1$ and $n=2$. \qed
	\end{proof}
	
	Next, we recall the result that establishes the Case 1/Case 2 classification for $3$-cycles in $\Lambda$ (recall the respective definition from Section~\ref{sec_notations_graph_id}):
	
	\begin{lemma}[Lemma 6.2, \cite{mantelos2026determinantally}]
		\label{prop_about_case1_case2_for_3_cycles}
		Let $\Lambda$ be a set, and $\mathbb{F}$ a field. If functions $K,Q:\Lambda^2\to\mathbb{F}$ are determinantally equivalent, then every $3$-cycle, $p=(p_i)_{i=0}^3$, in $\Lambda$ belongs to at least one of Case 1 or Case 2.
	\end{lemma}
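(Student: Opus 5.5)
Fix a $3$-cycle $p=(x,y,z,x)$ with $x,y,z$ distinct, write $a=K[p]$, $b=K'[p]$, $c=Q[p]$, $d=Q'[p]$, and compare the $3\times 3$ principal minors of $K$ and $Q$ on $\{x,y,z\}$. The identity $\det=\sum_\sigma\operatorname{sgn}(\sigma)\prod_i h(i,\sigma(i))$ groups the permutations of a $3$-element set into the identity, the three transpositions and the two $3$-cycles. The identity term is $K(x,x)K(y,y)K(z,z)$, and it agrees with the corresponding $Q$-term by Lemma~\ref{lemma_det_equivalence_w_easy_consequence}(i). A transposition fixing $x$ and swapping $y,z$ contributes $-K(x,x)K(y,z)K(z,y)$, and this agrees with its $Q$-analogue by parts (i) and (ii) of the same lemma. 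The two $3$-cycle permutations contribute $K[p]$ and $K'[p]$ with sign $+1$. Equality of the $3\times 3$ determinants therefore gives
\[
a+b=c+d .
\]

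Next I obtain the product relation. By Lemma~\ref{lemma_det_equivalence_w_easy_consequence}(ii), the quantity $ab=K(x,y)K(y,x)\cdot K(y,z)K(z,y)\cdot K(z,x)K(x,z)$ is the product of the three symmetric pair-products. Each of these is preserved, so
\[
ab=cd .
\]

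The two relations say that $\{a,b\}$ and $\{c,d\}$ are the root multisets of the same monic quadratic $T^2-(a+b)T+ab=(T-c)(T-d)$ over $\mathbb{F}$. Unique factorization in $\mathbb{F}[T]$ then forces either $(a,b)=(c,d)$ or $(a,b)=(d,c)$. The first is Case 1 and the second is Case 2, which proves the lemma.

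No step is a real obstacle; the only point needing care is the bookkeeping that the transposition terms match. The pair-product identity alone does not suffice for this. One needs the diagonal agreement from part (i) as well, so that $K(x,x)K(y,z)K(z,y)=Q(x,x)Q(y,z)Q(z,y)$. This argument works over an arbitrary field and does not need any non-vanishing assumption.
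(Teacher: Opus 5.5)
Your proposal is correct and is essentially the paper's proof. You expand the $3\times 3$ minors by Leibniz, cancel the identity and transposition terms using both parts of Lemma~\ref{lemma_det_equivalence_w_easy_consequence} to get $K[p]+K'[p]=Q[p]+Q'[p]$, combine this with $K[p]K'[p]=Q[p]Q'[p]$ from part (ii), and conclude that $\{K[p],K'[p]\}$ and $\{Q[p],Q'[p]\}$ coincide as multisets, spelling out the final quadratic step that the paper leaves implicit.
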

	\begin{proof}
		Equation (\ref{equal_determinants_relation}) with $n=3$ in conjunction with the Leibniz formula for determinants yields
		\begin{equation*}
			\sum_{\sigma\in S_3} \text{sgn}(\sigma) \prod_{i=1}^3 K(p_i,p_{\sigma(i)}) = \sum_{\sigma\in S_3} \text{sgn}(\sigma) \prod_{i=1}^3 Q(p_i,p_{\sigma(i)}).
		\end{equation*}
		By utilizing Lemma~\ref{lemma_det_equivalence_w_easy_consequence}, one then arrives at
		\begin{equation}
			\label{key_trick_mythm_eq1}
			K[p]+K'[p]=Q[p]+Q'[p].
		\end{equation}
		But, thanks to the second part of Lemma~\ref{lemma_det_equivalence_w_easy_consequence}, we also have
		\begin{equation}
			\label{key_trick_mythm_eq2}
			K[p]K'[p]=Q[p]Q'[p].
		\end{equation}
		Equations (\ref{key_trick_mythm_eq1}) and (\ref{key_trick_mythm_eq2}) together prove the claim of the lemma. \qed
	\end{proof}
	
	\begin{remark}
		Notice how in the definition for Case 1/Case 2 cycles from Section~\ref{sec_notations_graph_id}, if $K$ and $Q$ are determinantally equivalent, then the first equality implies the second and vice versa. It is important to note that this holds regardless of whether $K$ or $Q$ is nowhere zero, and follows immediately from \eqref{key_trick_mythm_eq1}. Therefore, establishing just the first (or the second) equality for a $3$-cycle $p$ is sufficient; and this fact is implicitly used throughout the paper.
	\end{remark}
	
	We recall a basic result for $3$-cycles that was used implicitly in \cite{mantelos2026determinantally}.
	
	\begin{lemma}
		\label{lemma_3cycle_in_both_cases_condition}
		Let $\Lambda$ be a set, and $\mathbb{F}$ a field. Let $p$ be a $3$-cycle in $\Lambda$. If functions $K,Q:\Lambda^2\to\mathbb{F}$ are determinantally equivalent, then
		\begin{equation*}
			\text{$p$ belongs to both Case 1 and Case 2}\iff K[p]=K'[p]\iff Q[p]=Q'[p].
		\end{equation*} 
	\end{lemma}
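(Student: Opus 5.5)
The plan is to prove the two equivalences separately, using only the two scalar identities obtained in the proof of Lemma~\ref{prop_about_case1_case2_for_3_cycles}, namely \eqref{key_trick_mythm_eq1} and \eqref{key_trick_mythm_eq2}:
\[
K[p]+K'[p]=Q[p]+Q'[p], \qquad K[p]K'[p]=Q[p]Q'[p].
\]
These hold for every $3$-cycle $p$ whenever $K$ and $Q$ are determinantally equivalent, with no non-vanishing assumption. No further structure is needed, so the argument is purely algebraic on the four scalars $K[p],K'[p],Q[p],Q'[p]$.

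\textbf{Second equivalence.} I will show $K[p]=K'[p]\iff Q[p]=Q'[p]$ by observing that $\{K[p],K'[p]\}$ and $\{Q[p],Q'[p]\}$ are the two roots, with multiplicity, of the same monic quadratic over $\mathbb{F}$:
\[
t^2-(K[p]+K'[p])\,t+K[p]K'[p] = t^2-(Q[p]+Q'[p])\,t+Q[p]Q'[p].
\]
This equality of polynomials is exactly \eqref{key_trick_mythm_eq1} together with \eqref{key_trick_mythm_eq2}. Since $\mathbb{F}[t]$ is a UFD, the multisets of roots coincide, so one pair consists of a repeated value if and only if the other does. The same multiset statement also recovers Lemma~\ref{prop_about_case1_case2_for_3_cycles}: either $(Q[p],Q'[p])=(K[p],K'[p])$ or $(Q[p],Q'[p])=(K'[p],K[p])$.

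\textbf{First equivalence, ($\Leftarrow$).} Suppose $K[p]=K'[p]$. Then the multisets give $Q[p]=Q'[p]=K[p]$. Hence $K[p]=Q[p]$ and $K'[p]=Q'[p]$, which is Case~1, and also $K[p]=Q'[p]$ and $K'[p]=Q[p]$, which is Case~2.

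\textbf{First equivalence, ($\Rightarrow$).} Suppose $p$ is in both cases. Then $K[p]=Q[p]$ from Case~1 and $Q[p]=K'[p]$ from Case~2. Hence $K[p]=K'[p]$. This step is immediate and does not need determinantal equivalence at all.

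The only point requiring a little care is the root-multiset argument in characteristic~$2$ or when some products vanish. Factorization uniqueness handles both: if $(t-a)(t-b)=(t-c)(t-d)$, evaluating at $t=c$ gives $(c-a)(c-b)=0$, so $c\in\{a,b\}$, and then $d$ is forced by comparing the linear coefficients. So I do not expect a genuine obstacle; the argument works over any field and with zero values allowed.
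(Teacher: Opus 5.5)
Your proof is correct and follows essentially the paper's route. The paper deduces this lemma immediately from Lemma~\ref{prop_about_case1_case2_for_3_cycles}, whose content is exactly the root-multiset equality $\{K[p],K'[p]\}=\{Q[p],Q'[p]\}$ that you extract from \eqref{key_trick_mythm_eq1} and \eqref{key_trick_mythm_eq2}; your explicit factorization check, valid over any field and with zero values allowed, just makes that step self-contained.
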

	\begin{proof}
		This is an immediate consequence of Lemma~\ref{prop_about_case1_case2_for_3_cycles}. \qed
	\end{proof}
	
	Next, we recall the definition of another important -- for our purposes -- type of function and transformation.
	\begin{definition}
		Let $\Lambda$ be a (possibly infinite) set and let $\mathbb{F}$ be a field. Fix an integer $1\leq n\leq|\Lambda|$. We say that a two-variable function $c:\Lambda^2\to\mathbb{F}$ satisfies the \textit{cocycle property for $n$-cycles} if for every $n$-cycle $p$ in $\Lambda$, $c[p]=1$. 
		
		If $c$ satisfies the cocycle property for cycles in $\Lambda$ of every length $n$, where $1\leq n \leq |\Lambda|$, we say that $c$ is a (full) \textit{cocycle function}. Equivalently, $c$ is a cocycle function if for all $z,w\in\Lambda$, $c(z,z)=1$ and $c(z,w)c(w,z)=1$, and for all integers $r>2$ and every $r$-tuple $(z_1,\ldots,z_r)\in\Lambda^r$,
		\begin{equation}
			\label{cocycle_property}
			c(z_1,z_2)c(z_2,z_3)\cdots c(z_{r-1},z_r)c(z_r,z_1)=1.
		\end{equation}
	\end{definition}
	
	\begin{definition}
		If $K,Q:\Lambda^2\to\mathbb{F}$ are functions that satisfy
		\begin{equation*}
			Q(x,y)=c(x,y)K(x,y)\qquad\forall x,y\in\Lambda
		\end{equation*}
		for some cocycle function $c:\Lambda^2\to\mathbb{F}$, we say that $Q$ is a \textit{cocycle transformation} of $K$.
	\end{definition}
	
	\begin{remark}
		In Proposition~4.3 from \cite{mantelos2026determinantally} it was shown that to prove that a function $c:\Lambda^2\to\mathbb{F}$ is a cocycle function, it suffices to establish the cocycle property just for cycles in $\Lambda$ of lengths $1$, $2$ and $3$, that is, it suffices to prove 
		\begin{enumerate}[\itshape(i)]
			\item{$c(x,x)=1$ for every $x\in\Lambda$;} \label{property_i_of_suff_cocycle}
			\item{$c(x,y)c(y,x)=1$ for every $x,y\in\Lambda$;} \label{property_ii_of_suff_cocycle}
			\item{$c(x,y)c(y,z)c(z,x)=1$ for every $x,y,z\in\Lambda$,} \label{property_iii_of_suff_cocycle}
		\end{enumerate}
	\end{remark}
	
	\begin{remark}
		We end this section by recalling the following fact, taken from Proposition~4.2 of \cite{mantelos2026determinantally}, regarding two functions $K,Q:\Lambda^2\to\mathbb{F}$: 
		
		$Q$ is a conjugation transformation of $K$ $\iff$ $Q$ is a cocycle transformation of $K$.
	\end{remark}
	
	\section{Proof Strategy}
	\label{sec_strategy}
	This section outlines the strategy of our derivation of Theorem~\ref{main_thm}.
	
	Thanks to the final remark from the previous section, to prove Theorem~\ref{main_thm}, it suffices to construct cocycle functions $S,\tilde{S}:\Lambda^2\to\mathbb{F}$ such that either
	\begin{equation}
		\label{eq_impossible_1}
		Q(z,w)=S(z,w)K(z,w)\qquad\forall z,w\in\Lambda,
	\end{equation} 
	or
	\begin{equation}
		\label{eq_impossible_2}
		Q(z,w)=\tilde{S}(z,w)K(w,z)\qquad\forall z,w\in\Lambda.
	\end{equation} 
	holds.
	
	We first note that thanks to Lemma~\ref{lemma_det_equivalence_w_easy_consequence} (i), we can choose for every $x\in\Lambda$, $$S(x,x)=1=\tilde{S}(x,x).$$
	
	Now, in order for either \eqref{eq_impossible_1} or \eqref{eq_impossible_2} to hold, we would first need to guarantee that either
	\begin{equation}
		\label{case1_relation}
		Q(z,w)=0\iff K(z,w)=0\qquad\forall z,w\in\Lambda\text{ distinct}
	\end{equation}
	or
	\begin{equation}
		\label{case2_relation}
		Q(z,w)=0\iff K(w,z)=0\qquad\forall z,w\in\Lambda\text{ distinct}
	\end{equation}
	is true, respectively. Provided that all $3$-cycles in $\Lambda$ belong to the same Case (which we establish in Section~\ref{sec_proof_of_main_result}), we will see in Section~\ref{sec_implications_on_D_class_assumption} -- and more specifically Proposition~\ref{prop_guarantee_nonexistence_of_problematic_pairs} -- that under our `class $\mathcal{D}$' hypothesis on $K$ and $Q$, one of \eqref{case1_relation} or \eqref{case2_relation} must indeed hold.
	
	Of course, in the situation where for every $z,w\in\Lambda$ distinct $K(z,w)\neq 0$ (equivalently, $Q(z,w)\neq 0$, by Lemma~\ref{lemma_det_equivalence_w_easy_consequence} (ii)), the problem reduces to that of \cite{mantelos2026determinantally}, where it was proven that at least one of 
	$$S(x,y)=\begin{cases*}
		\frac{Q(x,y)}{K(x,y)}, &\text{if $x\neq y$} \\
		1, &\text{if $x=y$}
	\end{cases*}\qquad\text{or}\qquad\tilde{S}(x,y)=\begin{cases*}
		\frac{Q(x,y)}{K(y,x)}, &\text{if $x\neq y$} \\
		1, &\text{if $x=y$}
	\end{cases*},\qquad x,y\in\Lambda,$$
	is a cocycle function.
	
	In the presence of zero values of $K$ and $Q$, the above definitions of $S$ and $\tilde{S}$ of course need to be modified. In particular, in Section~\ref{sec_proof_of_main_result} we prove that at least one of the following two functions is a cocycle function under the `class $\mathcal{D}$' hypothesis.
	\begin{equation}
		\label{eq:def_S}
		S(x,y)=
		\begin{cases*}
			1, & if $x=y$,\\
			\dfrac{Q(x,y)}{K(x,y)}, & if $K(x,y)\neq 0$,\\		
			\left(\dfrac{Q(y,x)}{K(y,x)}\right)^{-1},
			& \parbox[t]{.55\textwidth}{if $K(x,y)=0$ and $(x,y)$ is an edge of a $3$-cycle belonging only to Case 1,}\\
			\dfrac{Q(x,z)Q(z,y)}{K(x,z)K(z,y)},
			& \parbox[t]{.55\textwidth}{if $K(x,y)=0$ and $(x,y)$ is an edge of a $3$-cycle $(z,x,y,z)$ belonging to both Cases 1 and 2}
		\end{cases*}
	\end{equation}
	and
	\begin{equation}
		\label{eq:def_S_tilde}
		\tilde{S}(x,y)=
		\begin{cases*}
			1, & if $x=y$,\\
			\dfrac{Q(x,y)}{K(y,x)}, & if $K(y,x)\neq 0$,\\
			\left(\dfrac{Q(y,x)}{K(x,y)}\right)^{-1},
			& \parbox[t]{.55\textwidth}{if $K(y,x)=0$ and $(x,y)$ is an edge of a $3$-cycle belonging only to Case 2,}\\
			\dfrac{Q(x,z)Q(z,y)}{K(z,x)K(y,z)},
			& \parbox[t]{.55\textwidth}{if $K(y,x)=0$ and $(y,x)$ is an edge of a $3$-cycle $(z,y,x,z)$ belonging to both Cases 1 and 2}
		\end{cases*}
	\end{equation}
	We will delay answering questions pertaining to the well-definedness of the above two functions once we have established several key results to do with the effect that the `class $\mathcal{D}$' hypothesis has on $3$-cycles in $\Lambda$ (cf. Section~\ref{sec_implications_on_D_class_assumption}). Specifically, we will address the well-definedness of the two functions in Lemma~\ref{lemma_welldefined}.
	
	By combining the key structural results established in Proposition~\ref{prop_guarantee_nonexistence_of_problematic_pairs}, Lemma~\ref{lemma_welldefined} and Proposition~\ref{prop_cocycle_property_for_S_tildeS}, we obtain the following key result: if every $3$-cycle in $\Lambda$ belongs to Case 1 (resp. Case 2), then \eqref{case1_relation} and \eqref{eq_impossible_1} (resp. \eqref{case2_relation} and \eqref{eq_impossible_2}) hold and $S$ (resp. $\tilde{S}$) is a cocycle function. 
	
	Consequently, the proof of Theorem~\ref{main_thm} then reduces to showing that all $3$-cycles in $\Lambda$ belong to Case 1, or all to Case 2. Establishing this is the main objective of Section~\ref{sec_proof_of_main_result}, and in particular of Proposition~\ref{prop_step_2_lambda_4_elements} together with Lemmas~\ref{prop_step_2.1_lambda_4_elements} and \ref{prop_step_2.1_lambda_4_elements2}. 
	
	We remark that to establish this classification it suffices to prove it for subsets $\mathcal{M}\subseteq \Lambda$ with $|\mathcal{M}|=4$. Equivalently, it is enough to show that the four $3$-cycles listed in \eqref{three_cycle_labellings} belong to the same Case. Indeed, as observed in \cite{mantelos2026determinantally}, if this is established, it will follow that any two arbitrarily-chosen $3$-cycles in $\Lambda$, $p\coloneqq (p_i)_{i=0}^3$ and $q\coloneqq (q_i)_{i=0}^3$, must belong to the same Case:
	\begin{enumerate}
		\item \textit{If the cycles $p$ and $q$ share the same vertices,} then $p$ and $q$ are obviously the same $3$-cycle up to a possible reversion, in which case $p$ and $q$ belong to the same Case trivially by definition.
		\item \textit{If the cycles $p$ and $q$ share exactly two vertices,} then define $\mathcal{M}$ to be the set of vertices of $p$ and $q$. It is clear that $|\mathcal{M}|=4$ and that $p$ and $q$ are $3$-cycles in $\mathcal{M}$, and so we are done.
		\item \textit{If the cycles $p$ and $q$ share exactly one vertex,} then there is a unique index $i$ such that $q_i$ is a vertex of $p$. Denote by $q_{l_1}$ and $q_{l_2}$ the other two distinct vertices of $q$; and by $p_{m_1}$ and $p_{m_2}$ the other two distinct vertices of $p$. Note how the $3$-cycles $p$ and $(q_i,q_{l_1},p_{m_1},q_i)$ share exactly two vertices; namely, $q_i$ and $p_{m_1}$. By the conclusion of the above scenario, we must then have that the $3$-cycles $p$ and $(q_i,q_{l_1},p_{m_1},q_i)$ belong to the same Case. Furthermore, the $3$-cycles $q$ and $(q_i,q_{l_1},p_{m_1},q_i)$ also share exactly two vertices; namely, $q_i$ and $q_{l_1}$. Hence, for the same reason, it must be the case that the $3$-cycles $q$ and $(q_i,q_{l_1},p_{m_1},q_i)$ belong to the same Case as well. We are done by transitivity.
		\item \textit{If the cycles $p$ and $q$ have completely different vertices,} then since the $3$-cycles $p$ and $(q_0,q_1,p_1,q_0)$ share exactly one vertex, it follows, thanks to the conclusion of the above scenario, that the $3$-cycles $p$ and $(q_0,q_1,p_1,q_0)$ belong to the same Case. Additionally, the $3$-cycles $q$ and $(q_0,q_1,p_1,q_0)$ share exactly two vertices; and so by the conclusion of the second scenario, we then have that the $3$-cycles $q$ and $(q_0,q_1,p_1,q_0)$ belong to the same Case. We are done by transitivity.
	\end{enumerate}

	\section{Further Cycle Identities}
	\label{sec_further_identities}
	
	As noted earlier, many of the identities involving $3$-cycles and $4$-cycles established in \cite{mantelos2026determinantally} are no longer directly applicable in the present setting due to the possibility of vanishing values. In this section, we derive two new identities that are specifically adapted to this more general framework. Although simple in form, these identities prove to be powerful tools later on in our analysis.
	
	Throughout this section $\mathcal{M}\coloneqq\{1,2,3,4\}$, and we abide to the notations for $4$-cycles and $3$-cycles given in \eqref{four_cycle_labellings} and \eqref{three_cycle_labellings}, respectively.
	
	The following lemma supplements Lemma 5.2 of \cite{mantelos2026determinantally} by exhibiting a similar structural connection between $3$-cycles and $4$-cycles, using a slightly weaker hypothesis.
	
	\begin{lemma}
		\label{graph_lemma_1}
		Let $\mathbb{F}$ be a field, and let $h:\mathcal{M}^2\to\mathbb{F}$ be a function such that $h\neq 0$ on $$\{(x,y)\in\mathcal{M}:x\neq y\}\setminus\{(1,3),(3,1),(2,4),(4,2)\}.$$
		\begin{enumerate}[\itshape(i)]
			\item If $h(1,3)=0\text{ and } h(3,1)\neq0,$ then
			\begin{equation*}
				h[q^{[1]}]=h(3,4)h(4,3)\cdot h(4,1)h(1,4)\cdot\frac{h[p^{(1)}]}{h[p^{(3)}]};
			\end{equation*}
			and if instead $h(1,3)\neq0\text{ and } h(3,1)=0,$ then
			\begin{equation*}
				h'[q^{[1]}]=h(3,4)h(4,3)\cdot h(4,1)h(1,4)\cdot\frac{h'[p^{(1)}]}{h'[p^{(3)}]}.
			\end{equation*}
			\item If $h(2,4)=0$ and $h(4,2)\neq 0$, then
			\begin{equation*}
				h[q^{[1]}]=h(4,1)h(1,4)\cdot h(1,2)h(2,1)\cdot \frac{h[p^{(4)}]}{h'[p^{(2)}]};
			\end{equation*}
			and if instead $h(2,4)\neq0$ and $h(4,2)= 0$, then
			\begin{equation*}
				h'[q^{[1]}]=h(4,1)h(1,4)\cdot h(1,2)h(2,1)\cdot \frac{h'[p^{(4)}]}{h[p^{(2)}]}.
			\end{equation*}
		\end{enumerate}	
	\end{lemma}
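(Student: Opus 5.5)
The plan is a direct verification: expand both sides as products of values of $h$ and cancel common factors. The hypotheses are there only to ensure that every denominator is a nonzero element of $\mathbb{F}$. By assumption, $h$ is nonzero at every off-diagonal pair except possibly $(1,3),(3,1),(2,4),(4,2)$. So a denominator can vanish only through one of these four entries, and in each of the four sub-statements the relevant entry is exactly the one assumed to be nonzero.

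For the first identity in (i), expand from the labellings \eqref{four_cycle_labellings} and \eqref{three_cycle_labellings}:
\[
h[q^{[1]}]=h(1,2)h(2,3)h(3,4)h(4,1),\quad h[p^{(1)}]=h(1,2)h(2,3)h(3,1),\quad h[p^{(3)}]=h(1,4)h(4,3)h(3,1).
\]
\begin{itemize}
\item The denominator $h[p^{(3)}]$ involves only $h(1,4)$, $h(4,3)$ and $h(3,1)$. The first two are nonzero by hypothesis and the third by the assumption $h(3,1)\neq0$.
\item The factor $h(3,1)$ cancels, giving $h[p^{(1)}]/h[p^{(3)}]=h(1,2)h(2,3)/(h(1,4)h(4,3))$.
\item Multiplying by $h(3,4)h(4,3)h(4,1)h(1,4)$ cancels $h(1,4)h(4,3)$ and leaves $h(1,2)h(2,3)h(3,4)h(4,1)=h[q^{[1]}]$.
\end{itemize}
The reversed identity works the same way. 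Here $h'[p^{(1)}]=h(2,1)h(3,2)h(1,3)$ and $h'[p^{(3)}]=h(4,1)h(3,4)h(1,3)$. The shared factor $h(1,3)$ is nonzero by assumption, so it cancels. Multiplying the resulting ratio by the prefactor yields $h(2,1)h(3,2)h(4,3)h(1,4)=h'[q^{[1]}]$.

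For (ii), we have $h[p^{(4)}]=h(2,3)h(3,4)h(4,2)$ and $h'[p^{(2)}]=h(2,1)h(4,2)h(1,4)$.
\begin{itemize}
\item The common factor is $h(4,2)\neq0$, so it cancels and $h[p^{(4)}]/h'[p^{(2)}]=h(2,3)h(3,4)/(h(2,1)h(1,4))$.
\item Multiplying by $h(4,1)h(1,4)h(1,2)h(2,1)$ gives $h(1,2)h(2,3)h(3,4)h(4,1)=h[q^{[1]}]$.
\end{itemize}
In the reversed case, $h'[p^{(4)}]=h(3,2)h(4,3)h(2,4)$ and $h[p^{(2)}]=h(1,2)h(2,4)h(4,1)$. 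These share the nonzero factor $h(2,4)$, and the same multiplication produces $h(2,1)h(3,2)h(4,3)h(1,4)=h'[q^{[1]}]$.

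There is no genuine obstacle beyond bookkeeping. The only care needed is to check, in each of the four cases, which entry the 3-cycles in numerator and denominator share: it is always the possibly-vanishing diagonal-of-the-square entry that the hypothesis makes nonzero. The other assumed value (e.g.\ $h(1,3)=0$ in the first case) plays no role in the algebra. It is recorded only to distinguish which of the two reversed versions applies.
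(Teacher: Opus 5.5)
Your proposal is correct and is essentially the paper's argument: a direct algebraic verification in which every denominator is nonzero, because the only factor in it outside the assumed-nonzero set is the one entry (e.g.\ $h(3,1)$) that the relevant hypothesis makes nonzero. The paper multiplies and divides $h[q^{[1]}]$ by $h(3,1)$, $h(4,3)$, $h(1,4)$ (and the analogous factors in the other cases), whereas you expand the right-hand side and cancel; this is the same computation read in the opposite direction.
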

	
	\begin{proof}
		If 	$h(1,3)=0\text{ and } h(3,1)\neq0,$ then
		\begin{align*}
			h[q^{[1]}]&= h(1,2)h(2,3)h(3,4)h(4,1)\\
			&= \frac{h(1,2)h(2,3)h(3,1)}{h(3,1)}\cdot \frac{h(3,4)h(4,3)}{h(4,3)}\cdot \frac{h(4,1)h(1,4)}{h(1,4)}\\
			&= h(3,4)h(4,3)\cdot h(4,1)h(1,4)\cdot\frac{h[p^{(1)}]}{h[p^{(3)}]}.
		\end{align*}
		One proceeds analogously for the case where $h(1,3)\neq0\text{ and } h(3,1)=0$.
		
		Similarly, if $h(2,4)=0$ and $h(4,2)\neq 0$, then
		\begin{align*}
			h[q^{[1]}]&= h(2,3)h(3,4)h(4,1)h(1,2)\\
			&= \frac{h(2,3)h(3,4)h(4,2)}{h(4,2)}\cdot \frac{h(4,1)h(1,4)}{h(1,4)}\cdot\frac{h(1,2)h(2,1)}{h(2,1)}\\
			&=h(4,1)h(1,4)\cdot h(1,2)h(2,1)\cdot \frac{h[p^{(4)}]}{h'[p^{(2)}]}.
		\end{align*}
		One proceeds analogously for the case where $h(2,4)\neq0$ and $h(4,2)= 0$. \qed
	\end{proof}
	
	The next lemma is of the same flavour as Lemma~5.3 of \cite{mantelos2026determinantally}, but instead of `decomposing' a $3$-cycle in terms of three other distinct $3$-cycles, it does a $4$-cycle in terms of four distinct $3$-cycles.
	
	\begin{lemma}
		\label{graph_lemma_2}
		Let $\mathbb{F}$ be a field, and let $h:\left( \mathcal{M}\cup\{5\}\right)^2\to\mathbb{F}$ be a function of two variables such that for every $x\in\mathcal{M}$, 
		\begin{equation*}
			h(x,5)\neq 0\text{ and }h(5,x)\neq 0.
		\end{equation*}
		%\begin{equation*}
		%	h(1,5)\neq 0, h(5,1)\neq 0, h(2,5)\neq 0, h(5,2)\neq 0, h(3,5)\neq 0, h(5,3)\neq 0, h(4,5)\neq 0, h(5,4)\neq 0. 
		%	\end{equation*}
	Then,
	\begin{equation*}
		h[q^{[1]}]=\frac{h[r^{(1)}]h[r^{(2)}]h[r^{(3)}]h[r^{(4)}]}{h(1,5)h(5,1)\cdot h(2,5)h(5,2)\cdot h(3,5)h(5,3)\cdot h(4,5)h(5,4)},
	\end{equation*}
	where $r^{(1)}$, $r^{(2)}$, $r^{(3)}$, $r^{(4)}$ are the $3$-cycles
	\begin{equation*}
		r^{(1)}\coloneqq (1,2,5,1),\enspace r^{(2)}\coloneqq (2,3,5,2),\enspace r^{(3)}\coloneqq (3,4,5,3),\enspace r^{(4)}\coloneqq (4,1,5,4).
	\end{equation*}
\end{lemma}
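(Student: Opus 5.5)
The identity is a direct telescoping computation: multiply out the four $3$-cycle products through the auxiliary vertex $5$ and cancel the factors involving $5$ against the denominator. Dividing by the denominator is legitimate because, by hypothesis, $h(x,5)\neq0$ and $h(5,x)\neq0$ for every $x\in\mathcal{M}$. So the denominator
\[
D\coloneqq\prod_{x=1}^{4}h(x,5)h(5,x)
\]
is a nonzero element of $\mathbb{F}$. It therefore suffices to prove the equivalent polynomial identity $h[q^{[1]}]\cdot D=h[r^{(1)}]h[r^{(2)}]h[r^{(3)}]h[r^{(4)}]$.

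First I will write out each $3$-cycle product using the definition of $h[p]$:
\[
h[r^{(1)}]=h(1,2)h(2,5)h(5,1),\qquad h[r^{(2)}]=h(2,3)h(3,5)h(5,2),
\]
\[
h[r^{(3)}]=h(3,4)h(4,5)h(5,3),\qquad h[r^{(4)}]=h(4,1)h(1,5)h(5,4).
\]
Then I will regroup the twelve factors. The four edges not touching $5$ are $h(1,2)h(2,3)h(3,4)h(4,1)$, which is exactly $h[q^{[1]}]$ for $q^{[1]}=(1,2,3,4,1)$. The remaining eight factors are $h(2,5),h(5,1),h(3,5),h(5,2),h(4,5),h(5,3),h(1,5),h(5,4)$. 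Each $x\in\mathcal{M}$ contributes exactly one outgoing edge $h(x,5)$ (from the cycle in which $x$ precedes $5$) and one incoming edge $h(5,x)$ (from the cycle in which $x$ follows $5$). Hence these eight factors multiply to exactly $D$.

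This gives $\prod_i h[r^{(i)}]=h[q^{[1]}]\cdot D$, and dividing by $D\neq0$ yields the claim. There is no real obstacle. The only point needing care is the bookkeeping that every vertex $x\in\mathcal{M}$ appears once as the tail and once as the head of an edge through $5$, so that the pairings $h(x,5)h(5,x)$ in the denominator match precisely. Note also that no nonvanishing assumption is needed on the edges within $\mathcal{M}$; this is what makes the lemma usable in the presence of zeros.
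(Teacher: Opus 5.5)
Your proof is correct and follows the same approach as the paper. The paper's proof is a figure showing the same decomposition of $q^{[1]}$ into $r^{(1)},\dots,r^{(4)}$, in which the eight spoke factors $h(x,5)$, $h(5,x)$ cancel against the (nonzero) denominator; your explicit expansion writes out the bookkeeping that the figure leaves to the reader.
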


\begin{proof}
	In the below figure we provide a pictorial proof of the lemma.
	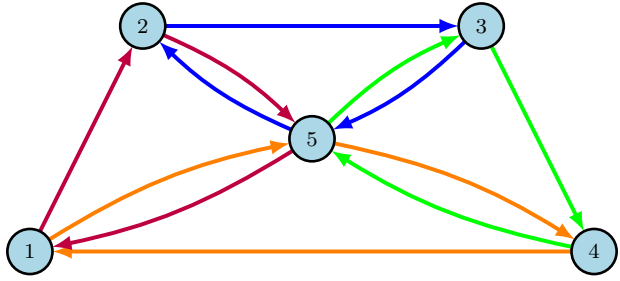
\begin{figure}[H]
		\centering
		\begin{tikzpicture}[scale=1.5]
			\Vertex[label=$1$]{A} \Vertex[x=5,label=$4$]{D} \Vertex[x=1,y=2, label=$2$]{B} \Vertex[x=4,y=2, label=$3$]{C} \Vertex[x=2.5,y=1,label=$5$]{E}
			\Edge[Direct,color=purple](A)(B)
			\Edge[Direct,color=blue](B)(C)
			\Edge[Direct,color=green](C)(D)
			\Edge[Direct,color=orange](D)(A)
			\Edge[Direct,bend=10,color=orange](A)(E)
			\Edge[Direct,bend=10,color=purple](E)(A)
			\Edge[Direct,bend=10,color=purple](B)(E)
			\Edge[Direct,bend=10,color=blue](E)(B)
			\Edge[Direct,bend=10,color=blue](C)(E)
			\Edge[Direct,bend=10,color=green](E)(C)
			\Edge[Direct,bend=10,color=green](D)(E)
			\Edge[Direct,bend=10,color=orange](E)(D)
			%\Edge[Direct,bend=10,color=blue](A)(C)
			%\Edge[Direct,bend=10,color=red](C)(A)	
		\end{tikzpicture}
		\caption{The $4$-cycle $q^{[1]}=(1,2,3,4,1)$ decomposed into four $3$-cycles: in red is the $3$-cycle {\color{purple}$r^{(1)}$}, in blue is the $3$-cycle {\color{blue}$r^{(2)}$}, in green is the $3$-cycle {\color{green}$r^{(3)}$}, and in orange is the $3$-cycle {\color{orange}$r^{(4)}$}. \qed}
	\end{figure}
\end{proof}

\section{Structural Consequences of Property $\mathcal{D}$}
\label{sec_implications_on_D_class_assumption}

This section is devoted to a detailed analysis of the `class $\mathcal{D}$' assumption, which plays a central role in our work. We investigate its structural consequences in the presence of vanishing values and show that it imposes strong constraints on the behaviour of determinantally equivalent functions. Several new results are established that demonstrate how this assumption ensures sufficient regularity for our methods to apply, ultimately enabling the proof of the main theorem.

There are several problematic scenarios arising from (ii) of Lemma~\ref{lemma_det_equivalence_w_easy_consequence} that we need to deal with first before proceeding with our analysis. More specifically, we need to first guarantee, under our hypothesis that the pair of determinantally equivalent functions $K,Q:\Lambda^2\to\mathbb{F}$ are of class $\mathcal{D}$, the non-existence of configurations $(x,y)\in\Lambda^2$ with $x\neq y$ such that 
\begin{equation}
	\label{problematic_scenario_1}
	K(x,y)=0,\enspace K(y,x)=0\text{ and } Q(x,y)=0,\enspace Q(y,x)\neq0
\end{equation}
or
\begin{equation}
	\label{problematic_scenario_2}
	K(x,y)=0,\enspace K(y,x)\neq 0\text{ and } Q(x,y)=0,\enspace Q(y,x)=0
\end{equation}
or
\begin{equation}
	\label{problematic_scenario_3}
	K(x,y)=0,\enspace K(y,x)=0\text{ and } Q(x,y)\neq0,\enspace Q(y,x)\neq0
\end{equation}
or
\begin{equation}
	\label{problematic_scenario_4}
	K(x,y)\neq0,\enspace K(y,x)=0\text{ and } Q(x,y)=0,\enspace Q(y,x)=0.
\end{equation}

The configurations \eqref{problematic_scenario_1}--\eqref{problematic_scenario_4} are precisely problematic because the occurrence of any one of them clearly precludes the existence of functions $S,\tilde{S}: \Lambda^2\to \mathbb{F}$ satisfying \eqref{eq_impossible_1} or \eqref{eq_impossible_2}.

%Indeed, the fact that $Q(y,x)\neq 0$ and $K(y,x)=0$ makes \eqref{eq_impossible_1} impossible, and the fact that $K(x,y)=0$ makes \eqref{eq_impossible_2} impossible. But if neither \eqref{eq_impossible_1} or \eqref{eq_impossible_2} is possible, then by Proposition~4.2 of \cite{mantelos2026determinantally} $Q$ and $K$ cannot possibly be transformed into one another through conjugation and/or transposition transformations!

\begin{remark}
	In \cite{equiv_symm_kernels_for_dpps}, where the pair of determinantally equivalent functions $K,Q:\Lambda^2\to\mathbb{F}$ were taken to be symmetric (i.e., $K(x,y)=K(y,x)$ and $Q(x,y)=Q(y,x)$ for every $x,y\in\Lambda$), the second part of Lemma~\ref{lemma_det_equivalence_w_easy_consequence} ensured
	\begin{equation*}
		K(x,y)=0\iff Q(x,y)=0,\qquad x,y\in\Lambda,
	\end{equation*}
	which automatically rules out all of \eqref{problematic_scenario_1}--\eqref{problematic_scenario_4}.
	
	Likewise, even in the non-symmetric setting of \cite{mantelos2026determinantally}, the additional assumption that $K$ and $Q$ are non-vanishing on the set $\{(x,y)\in\Lambda^2:x\neq y\}$, together with the second part of Lemma~\ref{lemma_det_equivalence_w_easy_consequence}, immediately excludes configurations \eqref{problematic_scenario_1}--\eqref{problematic_scenario_4}.
\end{remark}
In general, for an arbitrary pair of determinantally equivalent functions $K$ and $Q$ there is nothing preventing any of \eqref{problematic_scenario_1}--\eqref{problematic_scenario_4} from occurring. Under the `class $\mathcal{D}$' condition, however, we show that none of these problematic scenarios can ever occur. In particular, we obtain the following result.

\begin{proposition}
	\label{prop_guarantee_nonexistence_of_problematic_pairs}
	Let $\Lambda$ be a set, $\mathbb{F}$ a field, and let $K,Q:\Lambda^2\to\mathbb{F}$ be a pair of determinantally equivalent functions of class $\mathcal{D}$. Then, for every $x,y\in\Lambda$ distinct, either
	\begin{enumerate}[\itshape(i)]
		\item $K(x,y)=K(y,x)=Q(x,y)=Q(y,x)=0$, or 
		\item $K(x,y)\neq 0$, $K(y,x)\neq 0$, $Q(x,y)\neq 0$ and $Q(y,x)\neq 0$, or
		\item $K(x,y)=0=Q(x,y)$ and $K(y,x)\neq 0$, $Q(y,x)\neq 0$, or
		\item $K(x,y)\neq 0$, $Q(x,y)\neq 0$, and $K(y,x)=0=Q(y,x)$, or 
		\item $K(x,y)=0=Q(y,x)$ and $K(y,x)\neq 0$, $Q(x,y)\neq 0$, or
		\item $K(x,y)\neq 0$, $Q(y,x)\neq 0$, and $K(y,x)=0=Q(x,y)$.
	\end{enumerate}
	Specifically, cases (iii) and (iv) pertain to the situation where $(x,y)$ is an edge of a $3$-cycle in $\Lambda$ belonging only to Case 1, and cases (v) and (vi) to where $(x,y)$ is an edge of a $3$-cycle in $\Lambda$ belonging only to Case 2.
\end{proposition}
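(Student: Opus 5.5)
The plan is to read off the zero pattern from Lemma~\ref{lemma_det_equivalence_w_easy_consequence}(ii) and then use property $\mathcal{D}$ to rule out the patterns not on the list. We assume $|\Lambda|\geq 4$. Otherwise property $\mathcal{D}$ is vacuous and the statement can fail: for $\Lambda=\{x,y\}$, take $K$ diagonal and let $Q$ agree with $K$ except $Q(y,x)=1$.

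\textbf{Step 1: the product identity.} Fix distinct $x,y$. By Lemma~\ref{lemma_det_equivalence_w_easy_consequence}(ii), $K(x,y)K(y,x)=Q(x,y)Q(y,x)$. If this common product is nonzero we are in (ii). Otherwise at least one of $K(x,y),K(y,x)$ vanishes, and at least one of $Q(x,y),Q(y,x)$ vanishes. Say the $K$-pair is "both zero" or "exactly one zero", and likewise for the $Q$-pair.
\begin{itemize}
\item Both zero for $K$ and both zero for $Q$ is (i).
\item Exactly one zero for both $K$ and $Q$ gives four combinations, which are exactly (iii)--(vi).
\end{itemize}
So it suffices to exclude two mixed patterns: $K$ both zero with $Q$ exactly one zero, and the same with the roles of $K$ and $Q$ swapped. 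These are \eqref{problematic_scenario_1}, \eqref{problematic_scenario_2}, \eqref{problematic_scenario_4} and their mirror images under $x\leftrightarrow y$. Scenario \eqref{problematic_scenario_3} is already impossible, since it would make $Q(x,y)Q(y,x)\neq 0=K(x,y)K(y,x)$.

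\textbf{Step 2: the key lemma.} By the symmetry of the hypotheses in $K$ and $Q$, and in $x$ and $y$, we may assume
\[K(x,y)=K(y,x)=0,\qquad Q(x,y)=0\neq Q(y,x).\]

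First use the $3$-cycle $p=(z,x,y,z)$ for an arbitrary $z\neq x,y$. Both $K[p]$ and $K'[p]$ contain a factor $K(x,y)$ or $K(y,x)$, so both vanish. Then \eqref{key_trick_mythm_eq1} and \eqref{key_trick_mythm_eq2} give $Q[p]+Q'[p]=0$ and $Q[p]Q'[p]=0$, hence $Q[p]=Q'[p]=0$. In particular, since $Q(y,x)\neq0$,
\[Q(x,z)\,Q(z,y)=0 \quad\text{for every } z\neq x,y.\]

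Now apply property $\mathcal{D}$ for $Q$ to distinct $x,y,z,w$. Because $Q(x,y)=0$,
\[0\neq \begin{vmatrix} Q(x,y) & Q(x,z)\\ Q(w,y) & Q(w,z)\end{vmatrix}=-Q(x,z)\,Q(w,y).\]
Since $|\Lambda|\geq4$, we can always choose such a $w$, so $Q(x,z)\neq0$ for every $z\neq x,y$. Symmetrically $Q(w,y)\neq 0$ for every $w\neq x,y$, so in particular $Q(z,y)\neq0$. This contradicts the displayed identity $Q(x,z)Q(z,y)=0$. The swapped pattern is handled by the same argument with $K$ in place of $Q$, using property $\mathcal{D}$ for $K$.

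\textbf{Step 3: the Case 1 / Case 2 addendum.} Take case (iii) and any $3$-cycle $p=(z,x,y,z)$. Then $K[p]=Q[p]=0$, because both contain the zero factor at the edge $(x,y)$. The determinant argument of Step 2, applied with $K(x,y)=0$ and with $Q(x,y)=0$, gives that $K(x,z)$, $K(z,y)$, $Q(x,z)$ and $Q(z,y)$ are all nonzero. Together with $K(y,x)\neq0\neq Q(y,x)$, this shows $K'[p],Q'[p]\neq0$. Hence $K[p]=0\neq Q'[p]$, so $p$ is not in Case 2 and belongs only to Case 1 by Lemma~\ref{prop_about_case1_case2_for_3_cycles}. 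Cases (iv)--(vi) are identical after reversing the cycle or exchanging which of $K[p],K'[p]$ vanishes.

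The only genuine idea is Step 2: vanishing on the $3$-cycles through the pair forces a zero of $Q$, while property $\mathcal{D}$ forbids it. The rest is bookkeeping; the main point to watch is the need for $|\Lambda|\geq4$ so that a fourth point $w$ exists.
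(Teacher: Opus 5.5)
Your proof is correct, and it is organized differently from the paper's. The paper fixes a $3$-cycle $p$ through $(x,y)$ and splits into six scenarios, according to which Case(s) $p$ lies in and which of $K[p],K'[p]$ vanishes. It then reads off the zero pattern of the pair from Lemmas~\ref{lemma_3cycle_only_one_Case} and~\ref{lemma_3cycle_both_Cases}.

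You instead enumerate the zero patterns allowed by $K(x,y)K(y,x)=Q(x,y)Q(y,x)$ and observe that only the two mixed patterns are missing from the list. You then exclude them with a single argument: both $K$-entries vanishing forces $Q[p]=Q'[p]=0$, while property $\mathcal{D}$ for $Q$ at the zero $Q(x,y)$, together with $Q(y,x)\neq 0$, forces $Q'[p]\neq 0$.

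The ingredients are the same (the $3$-cycle identities and Lemma~\ref{prop_one_zero_implies_many_nonzeros}). Your version is shorter, self-contained, and shows exactly where class $\mathcal{D}$ is needed. In the paper this step sits inside the proof of Lemma~\ref{lemma_3cycle_both_Cases} (``both quantities must be zero \dots{} by Lemma~\ref{lemma_3cycle_only_one_Case}''). Lemma~\ref{lemma_3cycle_only_one_Case} as stated requires $K(p_i,p_{i-1})\neq 0$, which fails there, so what is really being used at that point is your argument with property $\mathcal{D}$ applied to $Q$.

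The paper's route, in exchange, is organized around the Case classification of $3$-cycles and goes through lemmas that are reused later (Corollary~\ref{cor_impossible_zeroedge_shared_w_Case1_Case2}, Lemma~\ref{lemma_welldefined}). Your Step~3 proves directly the direction of the ``Specifically'' clause that Proposition~\ref{prop_cocycle_property_for_S_tildeS} actually invokes (configuration $\Rightarrow$ type of every $3$-cycle through $(x,y)$), and the converse follows by elimination.

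Finally, your restriction $|\Lambda|\geq 4$ is right, and your two-point counterexample is valid. The paper needs this hypothesis too, only silently, since Lemma~\ref{prop_one_zero_implies_many_nonzeros} requires a fourth point $w$.
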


We start by presenting the first immediate consequence of the `class $\mathcal{D}$' assumption.

%In \cite{mantelos2026determinantally} we saw how important a role $3$-cycles played in establishing the respective result regarding nowhere zero determinantally equivalent functions. While many of the graph theoretic tricks from that paper involving $3$-cycles are not applicable in the current setting with possibly zero valued determinantally equivalent functions, it is again a careful analysis of $3$-cycles that brings about Proposition~\ref{prop_guarantee_nonexistence_of_problematic_pairs}. 

\begin{lemma}
	\label{prop_one_zero_implies_many_nonzeros}
	Let $\Lambda$ be a set, and $\mathbb{F}$ a field. Suppose that the function $h:\Lambda^2\to\mathbb{F}$ is of class $\mathcal{D}$. If there exist $x,y\in\Lambda$ distinct such that $h(x,y)=0$, then for every $z\in\Lambda\setminus\{x,y\}$, $$h(x,z)\neq 0\text{ and }h(z,y)\neq 0.$$
\end{lemma}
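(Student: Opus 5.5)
The plan is to argue by contradiction and read off a vanishing $2\times2$ determinant directly from the definition of class $\mathcal{D}$. The structural observation is this. The determinant in \eqref{property_D} is built from rows indexed by $x,w$ and columns indexed by $y,z$. If two entries lying in the same row, or in the same column, vanish, then that row or column of the $2\times 2$ matrix is zero, so the determinant is zero. This contradicts \eqref{property_D}, provided the four indices involved are pairwise distinct.

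First I take $z\in\Lambda\setminus\{x,y\}$ and suppose $h(x,z)=0$. I choose any $w\in\Lambda\setminus\{x,y,z\}$, so that $x,y,z,w$ are pairwise distinct. Then
\begin{equation*}
\begin{vmatrix}
h(x,y) & h(x,z)\\
h(w,y) & h(w,z)
\end{vmatrix}
= 0\cdot h(w,z)-0\cdot h(w,y)=0,
\end{equation*}
since its first row is $(h(x,y),h(x,z))=(0,0)$. This contradicts \eqref{property_D}, so $h(x,z)\neq0$.

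Second, I suppose $h(z,y)=0$. With the same choice of $w$, I apply \eqref{property_D} to the quadruple whose rows are indexed by $x,z$ and whose columns are indexed by $y,w$:
\begin{equation*}
\begin{vmatrix}
h(x,y) & h(x,w)\\
h(z,y) & h(z,w)
\end{vmatrix}
= 0\cdot h(z,w)-h(x,w)\cdot 0=0.
\end{equation*}
Here the first column is zero, which again contradicts property $\mathcal{D}$. Hence $h(z,y)\neq 0$.

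The only genuine obstacle is the existence of the auxiliary fourth point $w$, that is, the requirement $|\Lambda|\geq 4$. Class $\mathcal{D}$ is vacuous when $|\Lambda|\leq 3$, so in that range the statement cannot be derived from property $\mathcal{D}$ alone. I would therefore state explicitly that the lemma is used under the standing assumption $|\Lambda|\geq4$, the regime in which property $\mathcal{D}$ has content. Alternatively, I would note that the small cases $|\Lambda|\le 3$ of Theorem~\ref{main_thm} are handled separately, without this lemma. For instance, when $|\Lambda|\le3$ all $3$-cycles share the same vertex set and so trivially lie in a single Case, and the remaining verification is direct. Apart from this caveat, the argument is immediate from the definition.
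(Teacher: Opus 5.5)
Your proof is correct and is essentially the paper's argument. The paper takes the determinant with rows $x,w$ and columns $y,z$, notes that when $h(x,y)=0$ it equals $-h(x,z)h(w,y)\neq 0$, and then swaps $z$ and $w$ to get $h(z,y)\neq 0$; your zero-row/zero-column contradictions are the same computation read contrapositively. Your observation that a fourth point $w$ is needed is also right: the paper assumes it tacitly, and for $|\Lambda|=3$ the lemma fails, since class $\mathcal{D}$ is then vacuous and $h\equiv 0$ is a counterexample.

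Keep only your first remedy (the standing assumption $|\Lambda|\ge 4$). Your alternative claim that Theorem~\ref{main_thm} can be verified directly for $|\Lambda|\le 3$ is false. On $\Lambda=\{1,2\}$ or $\{1,2,3\}$, take $Q\equiv 0$ and let $K$ have $K(1,2)=1$ and all other values $0$. These are determinantally equivalent, since every principal minor of a strictly triangular array vanishes, and they are vacuously of class $\mathcal{D}$. Yet conjugation and transposition cannot turn a function with a nonzero value into the zero function.
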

\begin{proof}
	Let $x,y,z,w\in\Lambda$ be pairwise distinct. Then, by property $\mathcal{D}$,
	\begin{equation}
		\begin{vmatrix}
			h(x,y) & h(x,z) \\
			h(w,z) & h(w,z)
		\end{vmatrix}=\begin{vmatrix}
			0 & h(x,z) \\
			h(w,z) & h(w,z)
		\end{vmatrix} \neq 0,
	\end{equation}
	in which case we necessarily have $h(x,z)\neq 0$ and $h(w,z)\neq 0$. By also interchanging $z$ and $w$, the result follows. \qed
\end{proof}

\begin{remark}
	Recalling the graphical framework from Section~\ref{sec_notations_graph_id}, Lemma~\ref{prop_one_zero_implies_many_nonzeros} shows that knowledge of an edge of a given $3$-cycle on which $K=0$ provides knowledge of several other edges arising from it on which $K\neq 0$. Consider the $3$-cycle $p=(p_i)_{i=0}^3$ from Section~\ref{sec_notations_graph_id}, for instance, and fix  $p_*\in\Lambda\setminus\{p_0,p_1,p_2\}$:
	\begin{figure}[H]
		\centering
		\begin{tikzpicture}[baseline=(current bounding box.center)]
			\Vertex[label=$p_2$]{A} \Vertex[x=6,label=$p_1$]{B}, \Vertex[x=3,y=3,label=$p_0$]{C} 
			\Edge[Direct,color=black,label=$\neg 0$](A)(C)
			\Edge[Direct,color=black,label=0,color=red](C)(B)
			\Edge[Direct,color=black,label=0,color=blue](B)(A)
		\end{tikzpicture}
		\hfill
		\begin{tikzpicture}[baseline=(current bounding box.center)]
			\Vertex[label=$p_2$]{A} \Vertex[x=6,label=$p_1$]{B}, \Vertex[x=3,y=3,label=$p_0$]{C} 
			\Vertex[x=3,y=1,label=$p_*$]{D}
			\Edge[Direct,color=black,label=$\neg 0$](A)(C)
			\Edge[Direct,color=black,label=0,color=red](C)(B)
			\Edge[Direct,color=black,label=0,color=blue](B)(A)
			\Edge[Direct,color=red!50!blue,label=$\neg 0$,bend=-30](C)(A)
			\Edge[Direct,color=blue,label=$\neg0$,bend=-30](B)(C)
			\Edge[Direct,color=red,label=$\neg0$,bend=-30](A)(B)
			\Edge[Direct, color=red, bend=-30, label=$\neg0$](C)(D)
			\Edge[Direct, color=blue, bend=-10, label=$\neg0$](B)(D)
		\end{tikzpicture}
		\caption{In the left graph, we depict the cycle $p$, with its two known edges on which $K=0$ highlighted in red and blue, respectively. In the right graph, we show the edges on which $K\neq0$ that are implied by Lemma~\ref{prop_one_zero_implies_many_nonzeros}. The coloring reflects the edge from which they arise, with purple indicating edges associated with both.} 
	\end{figure} 
\end{remark}

Lemma~\ref{prop_one_zero_implies_many_nonzeros} brings about two further interesting consequences regarding $3$-cycles, stated as lemmas below.
\begin{lemma}
	\label{lemma_3cycle_only_one_Case}
	Let $\Lambda$ be a set, $\mathbb{F}$ a field, and let $K,Q:\Lambda^2\to\mathbb{F}$ be a pair of determinantally equivalent functions of class $\mathcal{D}$. Fix a $3$-cycle $p=(p_0,p_1,p_2,p_0)$ in $\Lambda$ which belongs only to one Case. Then,
	\begin{enumerate}[\itshape(i)]
		\item $p$ belongs only to Case 1 and $K[p]=0$ $\iff$ $p$ has an edge $(p_{i-1},p_i)$	such that
		\begin{equation*}
			K(p_{i-1},p_i)=0=Q(p_{i-1},p_i),\text{ and }K(p_i,p_{i-1})\neq0,\enspace Q(p_i,p_{i-1})\neq0.
		\end{equation*}
		
		Moreover, if $(x,y)$ is an arbitrary edge of the above $3$-cycle $p$ such that $K(x,y)=0$, then it is necessarily of the same type as $(p_{i-1},p_i)$ in the sense that it also satisfies
		\begin{equation}
			\label{lemma_only_one_case_1_xy_zero}
			K(x,y)=0=Q(x,y),\text{ and }K(y,x)\neq0,\enspace Q(y,x)\neq0.
		\end{equation}
		And if $(x,y)$ instead satisfied $K(x,y)\neq0$, then
		\begin{equation}
			\label{lemma_only_one_case_1_xy_nonzero}
			K(x,y)\neq0,\enspace K(y,x)\neq 0,\enspace Q(x,y)\neq 0,\enspace Q(y,x)\neq 0.
		\end{equation}
		\item $p$ belongs only to Case 2 and $K[p]=0$ $\iff$ $p$ has an edge $(p_{i-1},p_i)$	such that 
		\begin{equation*}
			K(p_{i-1},p_i)=0=Q(p_i,p_{i-1}),\text{ and }K(p_i,p_{i-1})\neq0,\enspace Q(p_{i-1},p_i)\neq 0.
		\end{equation*}
		
		Moreover, if $(x,y)$ is an arbitrary edge of the above $3$-cycle $p$ such that $K(x,y)=0$, then it is necessarily of the same type as $(p_{i-1},p_i)$ in the sense that it also satisfies
		\begin{equation*}
			K(x,y)=0=Q(y,x),\text{ and }K(y,x)\neq0,\enspace Q(x,y)\neq0.
		\end{equation*}
		And if $(x,y)$ instead satisfied $K(x,y)\neq0$, then
		\begin{equation*}
			K(x,y)\neq0,\enspace K(y,x)\neq 0,\enspace Q(x,y)\neq 0,\enspace Q(y,x)\neq 0.
		\end{equation*}
	\end{enumerate}
\end{lemma}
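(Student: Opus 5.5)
We prove (i); (ii) then follows by applying (i) to the pair $K$ and $Q^T(x,y)\coloneqq Q(y,x)$. This pair is again determinantally equivalent and of class $\mathcal{D}$, since $\mathcal{D}$ is invariant under transposition: swap the roles of $(x,w)$ and $(y,z)$ in \eqref{property_D}. The transpose also swaps Case 1 and Case 2.

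The basic tool is Lemma~\ref{prop_one_zero_implies_many_nonzeros}. If $K(x,y)=0$ for an edge $(x,y)$ of $p=(x,y,z,x)$, then $K(x,z)\neq0$ and $K(z,y)\neq0$. These are exactly the two edges of the reverse cycle $p'=(x,z,y,x)$ other than $(y,x)$, so $K'[p]=K(y,x)K(x,z)K(z,y)$ vanishes if and only if $K(y,x)=0$. The same holds for $Q$.

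\textbf{Forward direction of (i).} Suppose $p$ is only in Case 1 and $K[p]=0$. Then $Q[p]=K[p]=0$. Moreover $K'[p]\neq 0$: otherwise $K[p]=0=K'[p]$, and Lemma~\ref{lemma_3cycle_in_both_cases_condition} would put $p$ in both Cases. Hence $Q'[p]=K'[p]\neq0$, so all three reverse edges are nonzero for both $K$ and $Q$.

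Now take any edge $(x,y)$ of $p$ with $K(x,y)=0$; one exists because $K[p]=0$. We already know $K(y,x)\neq0$ and $Q(y,x)\neq0$, and it remains to show $Q(x,y)=0$. Since $Q[p]=0$, some edge $(u,v)$ of $p$ has $Q(u,v)=0$. By Lemma~\ref{prop_one_zero_implies_many_nonzeros} applied to $Q$, every edge of $p$ other than $(u,v)$ is an edge of the form $(u,\cdot)$ or $(\cdot,v)$, so $Q$ is nonzero there. Thus $(u,v)$ is the unique $Q$-zero edge of $p$. Applying the same lemma to $K$, the edge $(x,y)$ is the unique $K$-zero edge.

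The main obstacle is showing $(u,v)=(x,y)$. Suppose instead that $(u,v)\neq(x,y)$. Then $Q(x,y)\neq0$, and Lemma~\ref{lemma_det_equivalence_w_easy_consequence}(ii) gives $Q(x,y)Q(y,x)=K(x,y)K(y,x)=0$, which contradicts $Q(y,x)\neq0$. Hence $Q(x,y)=0$. This argument establishes the existence claim and \eqref{lemma_only_one_case_1_xy_zero} at once.

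For an edge with $K(x,y)\neq0$: we have $K(y,x)\neq0$ because $K'[p]\neq0$, and $Q(y,x)\neq0$ because $Q'[p]\neq0$. Lemma~\ref{lemma_det_equivalence_w_easy_consequence}(ii) then gives $Q(x,y)Q(y,x)=K(x,y)K(y,x)\neq0$, so $Q(x,y)\neq0$. This proves \eqref{lemma_only_one_case_1_xy_nonzero}.

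\textbf{Converse of (i).} Suppose $p$ has an edge with $K(p_{i-1},p_i)=0=Q(p_{i-1},p_i)$ and $K(p_i,p_{i-1})\neq0\neq Q(p_i,p_{i-1})$. Then $K[p]=0=Q[p]$, so $p$ is in Case 1. By the basic tool above, $K'[p]\neq0$, so $K[p]\neq K'[p]$, and Lemma~\ref{lemma_3cycle_in_both_cases_condition} shows $p$ is not in both Cases. Therefore $p$ belongs only to Case 1, and $K[p]=0$ as required. The argument for (ii) is identical after the transposition described at the start.
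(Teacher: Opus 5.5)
Your overall structure is the paper's. In the forward direction you get $K'[p]\neq0$ from Lemma~\ref{lemma_3cycle_in_both_cases_condition}, hence $Q'[p]=K'[p]\neq0$, and then use Lemma~\ref{lemma_det_equivalence_w_easy_consequence}(ii). The converse uses Lemma~\ref{prop_one_zero_implies_many_nonzeros} to get $K'[p]\neq0$, as in the paper. Your reduction of (ii) to (i) via $Q^T$ is valid, and more explicit than the paper's ``analogous steps''.

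However, the forward direction contains a false step. You claim that if $Q(u,v)=0$ for an edge of $p=(u,v,w,u)$, then every other edge of $p$ has the form $(u,\cdot)$ or $(\cdot,v)$, so $(u,v)$ is the unique $Q$-zero edge (and likewise for $K$). In fact the other edges of $p$ are $(v,w)$ and $(w,u)$, of the forms $(v,\cdot)$ and $(\cdot,u)$. Lemma~\ref{prop_one_zero_implies_many_nonzeros} only gives $Q(u,w)\neq0$ and $Q(w,v)\neq0$, i.e.\ nonvanishing on edges of the \emph{reverse} cycle $p'$ --- exactly what you correctly observed in your ``basic tool'' paragraph.

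A $3$-cycle belonging only to Case 1 can have two or three zero edges. On $\{1,2,3,4\}$, take $Q=K$ with $K(1,2)=K(2,3)=K(3,1)=0$ and all other off-diagonal entries generic and nonzero. Each $2\times2$ determinant in \eqref{property_D} then has exactly one vanishing product term, so $K$ is of class $\mathcal{D}$. Meanwhile $K'[p^{(1)}]\neq0=K[p^{(1)}]$, so $p^{(1)}$ is only in Case 1, yet all three of its edges are zero. The paper handles such cycles explicitly (cases 4 and 5 in the proof of Proposition~\ref{prop_cocycle_property_for_S_tildeS}), and the lemma's phrasing ``if $(x,y)$ is an arbitrary edge \ldots\ such that $K(x,y)=0$'' anticipates several. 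So your inference ``$(u,v)\neq(x,y)\Rightarrow Q(x,y)\neq0$'' is unjustified.

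The repair is immediate, and you already have the ingredients: the detour through $(u,v)$ is unnecessary. Once you know $Q(y,x)\neq0$ (from $Q'[p]\neq0$), Lemma~\ref{lemma_det_equivalence_w_easy_consequence}(ii) gives $Q(x,y)Q(y,x)=K(x,y)K(y,x)=0$. Hence $Q(x,y)=0$ directly, for every $K$-zero edge $(x,y)$ of $p$. This is precisely the paper's argument. Delete the uniqueness claims and the rest of your proof goes through.
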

\begin{proof}
	We note that in both instances (i) and (ii) of the lemma $K'[p]\neq 0$. This follows from Lemma~\ref{lemma_3cycle_in_both_cases_condition} and the fact that $K[p]=0$ in both (i) and (ii). From this one realizes that the forward implications of (i) and (ii) are almost immediate -- it is the backward implications that are interesting, and where property $\mathcal{D}$ is extensively used. 
	
	Starting with the proof of the forward direction of instance (i), we note that $K[p]=0$ implies that $K(p_{i-1},p_i)=0$ for at least one index $i$, and since $K'[p]\neq 0$, it must also be that $K(p_{i},p_{i-1})\neq 0$. Since $p$ belongs to Case 1, we have $Q'[p]= K'[p]$ and hence $Q'[p]\neq 0$, in which case $Q(p_{i},p_{i-1})\neq 0$ as well. By the second part of Lemma~\ref{lemma_det_equivalence_w_easy_consequence},
	\begin{equation*}
		0=K(p_{i-1},p_i)K(p_{i},p_{i-1})=Q(p_{i-1},p_i)Q(p_{i},p_{i-1}).
	\end{equation*}
	Since $Q(p_{i},p_{i-1})\neq 0$, it must be that $Q(p_{i-1},p_i)=0$. 
	
	We now prove the more interesting backward direction of instance (i). It follows from $K(p_{i-1},p_i)=0$ that
	\begin{equation*}
		K[p]=K(p_{i-1},p_i)K(p_i,p_{i+1})K(p_{i+1},p_{i-1})=0.
	\end{equation*}
	By the second part of Lemma~\ref{lemma_det_equivalence_w_easy_consequence}, the fact that $K(p_{i-1},p_i)=0$ also brings about
	\begin{equation*}
		0=K(p_{i-1},p_i)K(p_i,p_{i-1})=Q(p_{i-1},p_i)Q(p_i,p_{i-1}).
	\end{equation*}
	Because $Q(p_i,p_{i-1})$ is assumed to be non-zero, it must be that $Q(p_{i-1},p_i)=0$, which of course implies $Q[p]=0$. Therefore,
	\begin{equation*}
		Q[p]=0=K[p],
	\end{equation*}
	and so $p$ indeed belongs to Case 1 and $K[p]=0$. It remains to prove that it does not belong to Case 2, which by Lemma~\ref{lemma_3cycle_in_both_cases_condition} is equivalent to showing that $K'[p]\neq 0$.
	
	Applying Lemma~\ref{prop_one_zero_implies_many_nonzeros} in conjunction with $K(p_{i-1},p_i)=0$ we get
	\begin{equation*}
		K(p_{i-1},p_{i+1})\neq0\text{ and } K(p_{i+1},p_i)\neq 0.
	\end{equation*}
	This, together with the assumed $K(p_{i},p_{i-1})\neq 0$, brings about
	\begin{equation*}
		K'[p]=K(p_{i-1},p_{i+1})K(p_{i+1},p_i)K(p_{i},p_{i-1})\neq 0,
	\end{equation*}
	as required.
	
	Now, if $(x,y)$ is any edge of the $3$-cycle $p$ from (i) of the lemma such that $K(x,y)=0$, then since $K(y,x)\neq 0$ (recall that $K'[p]\neq 0$) and $Q(y,x)\neq 0$ (indeed, if $Q(y,x)$ were zero instead, then $Q'[p]$ would be zero and hence $K[p]=Q'[p]$, contradicting the fact that $p$ only belongs to Case 1), we have by the second part of Lemma~\ref{lemma_det_equivalence_w_easy_consequence} that
	\begin{equation*}
		0=K(x,y)K(y,x)=Q(x,y)Q(y,x),
	\end{equation*}
	in which case $Q(x,y)$ necessarily equals zero. This is precisely \eqref{lemma_only_one_case_1_xy_zero}.
	
	If $(x,y)$ were instead an edge of the $3$-cycle $p$ from (i) of the lemma such that $K(x,y)\neq0$, then this in conjunction with the second part of  Lemma~\ref{lemma_det_equivalence_w_easy_consequence} and the already known $K(y,x)\neq 0$ brings about \eqref{lemma_only_one_case_1_xy_nonzero}.
	
	The proof of part (ii) of the lemma follows analogous steps and so we omit it. \qed
\end{proof}

\begin{lemma}
	\label{lemma_3cycle_both_Cases}
	Let $\Lambda$ be a set, $\mathbb{F}$ a field, and let $K,Q:\Lambda^2\to\mathbb{F}$ be a pair of determinantally equivalent functions of class $\mathcal{D}$. Fix a $3$-cycle $p=(p_0,p_1,p_2,p_0)$ in $\Lambda$ such that $K[p]=0$. Then, $p$ belongs to both Case 1 and Case 2 if and only if there is a unique edge $(p_{j-1},p_j)$ such that
	\begin{equation*}
		K(p_{j-1},p_j)=K(p_j,p_{j-1})=Q(p_{j-1},p_j)=Q(p_j,p_{j-1})=0,
	\end{equation*}
	and the two other edges of $p$, $(p_j,p_{j+1})$ and $(p_{j+1},p_{j-1})$, satisfy
	\begin{equation*}
		K(p_j,p_{j+1})\neq 0,\enspace K(p_{j+1},p_j)\neq 0,\enspace K(p_{j+1},p_{j-1})\neq 0,\enspace K(p_{j-1},p_{j+1})\neq 0,
	\end{equation*}
	and
	\begin{equation*}
		Q(p_j,p_{j+1})\neq 0,\enspace Q(p_{j+1},p_j)\neq 0,\enspace Q(p_{j+1},p_{j-1})\neq 0,\enspace Q(p_{j-1},p_{j+1})\neq 0.
	\end{equation*}
\end{lemma}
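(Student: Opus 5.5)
By Lemma~\ref{lemma_3cycle_in_both_cases_condition} and the hypothesis $K[p]=0$, the cycle $p$ belongs to both Cases if and only if $K'[p]=0$. The whole proof therefore reduces to analysing when $K[p]=0$ and $K'[p]=0$ hold together, using Lemma~\ref{prop_one_zero_implies_many_nonzeros} to control which entries can vanish.

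\emph{Backward direction.} This is immediate. The edge $(p_{j-1},p_j)$ has $K(p_{j-1},p_j)=0$, so $K[p]=0$; it also has $K(p_j,p_{j-1})=0$, so $K'[p]=0$. Lemma~\ref{lemma_3cycle_in_both_cases_condition} then puts $p$ in both Cases.

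\emph{Forward direction, Step 1: find a doubly vanishing pair.} Assume $K[p]=K'[p]=0$. Choose an index $i$ with $K(p_{i-1},p_i)=0$. Lemma~\ref{prop_one_zero_implies_many_nonzeros}, with $x=p_{i-1}$, $y=p_i$, $z=p_{i+1}$, gives $K(p_{i-1},p_{i+1})\neq0$ and $K(p_{i+1},p_i)\neq0$. These are two of the three factors of $K'[p]=K(p_{i-1},p_{i+1})K(p_{i+1},p_i)K(p_i,p_{i-1})$, so $K'[p]=0$ forces $K(p_i,p_{i-1})=0$. Set $j=i$.

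\emph{Step 2: the other four $K$-values are nonzero.} Apply Lemma~\ref{prop_one_zero_implies_many_nonzeros} to the zero $K(p_j,p_{j-1})=0$, with $x=p_j$, $y=p_{j-1}$, $z=p_{j+1}$. This gives $K(p_j,p_{j+1})\neq0$ and $K(p_{j+1},p_{j-1})\neq0$. Together with the two nonvanishing values from Step 1, all four $K$-entries on the other two edges are nonzero.

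\emph{Step 3: uniqueness.} Any other edge of $p$ carries a nonzero $K$-value in at least one direction, so $(p_{j-1},p_j)$ is the only edge with both $K$-directions zero.

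\emph{Step 4: the $Q$-values.} Lemma~\ref{lemma_det_equivalence_w_easy_consequence}(ii) gives $Q(p_{j-1},p_j)Q(p_j,p_{j-1})=0$, and it also shows that both $Q$-entries on each of the other two edges are nonzero. The remaining task is to show that \emph{both} $Q(p_{j-1},p_j)$ and $Q(p_j,p_{j-1})$ vanish.

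Suppose instead that $Q(p_j,p_{j-1})\neq0$. Then $Q(p_{j-1},p_j)=0$, so $Q[p]=0$. But $p$ is also in Case~2, so $Q'[p]=K[p]=0$. The first two factors of $Q'[p]$, namely $Q(p_{j-1},p_{j+1})$ and $Q(p_{j+1},p_j)$, are already known to be nonzero. Hence the third factor $Q(p_j,p_{j-1})$ must be $0$, a contradiction.

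The symmetric supposition $Q(p_{j-1},p_j)\neq0$ is handled the same way, using $Q[p]=K[p]=0$ from Case~1. Alternatively, one can apply Lemma~\ref{prop_one_zero_implies_many_nonzeros} directly to $Q$, since $Q$ is also of class $\mathcal{D}$.

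The main care point is Step 4. The pair condition $Q(x,y)Q(y,x)=0$ only says that one of the two entries vanishes, and it is the membership in both Cases, giving $Q[p]=Q'[p]=0$, that upgrades this to both entries vanishing.
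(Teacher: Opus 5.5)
Your proof is correct, and it reaches the conclusion more directly than the paper does. The paper starts from Lemma~\ref{lemma_3cycle_only_one_Case}. It rephrases membership in both Cases as the absence of an edge with $K(p_{i-1},p_i)=0\neq K(p_i,p_{i-1})$, sorts the edges of $p$ into three $K$-patterns, and excludes the pattern $K(p_{k-1},p_k)\neq0=K(p_k,p_{k-1})$ by two applications of Lemma~\ref{prop_one_zero_implies_many_nonzeros}. Only then does it conclude that some edge vanishes in both directions.

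You instead use Lemma~\ref{lemma_3cycle_in_both_cases_condition} to reduce to $K'[p]=0$. A single application of Lemma~\ref{prop_one_zero_implies_many_nonzeros} then shows that any edge with $K(p_{i-1},p_i)=0$ automatically has $K(p_i,p_{i-1})=0$. You also prove the backward direction explicitly, where the paper leaves it implicit in a chain of equivalences.

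The clearest gain is in the $Q$-step. The paper asserts that both $Q(p_{j-1},p_j)$ and $Q(p_j,p_{j-1})$ vanish ``for otherwise, by Lemma~\ref{lemma_3cycle_only_one_Case}, we would have $p$ belonging to only one Case.'' However, the characterization in that lemma requires $K(p_i,p_{i-1})\neq0$, which fails on a doubly vanishing edge. The citation therefore only works if read as rerunning that lemma's argument with $K$ and $Q$ interchanged. Your ordering makes the step explicit: you first get the four nonzero $Q$-values on the other two edges from Lemma~\ref{lemma_det_equivalence_w_easy_consequence}(ii), and then use $Q[p]=Q'[p]=0$ from membership in both Cases. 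What the paper's route buys is mainly reuse of its Case~1-only/Case~2-only/both taxonomy.

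One caveat applies equally to your argument and the paper's. The proof of Lemma~\ref{prop_one_zero_implies_many_nonzeros} needs a fourth point $w\in\Lambda\setminus\{x,y,z\}$, so $|\Lambda|\geq 4$ is tacitly assumed. For $|\Lambda|=3$, class $\mathcal{D}$ is vacuous, and taking $K=Q$ with two doubly vanishing edges on $p$ breaks the uniqueness claim.
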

\begin{proof}
	By Lemma~\ref{lemma_3cycle_only_one_Case}, the $3$-cycle $p$ from the statement of the current lemma belongs to both Case 1 and Case 2 if and only if there is \textit{no} edge $(p_{i-1},p_i)$ such that $K(p_{i-1},p_i)=0$ and $K(p_i,p_{i-1})\neq 0$, which happens if and only if for every edge $(p_{k-1},p_k)$ of $p$ either
	\begin{equation}
		\label{possibility_1}
		K(p_{k-1},p_k)=K(p_k,p_{k-1})=0
	\end{equation}
	or
	\begin{equation}
		\label{possibility_2}
		K(p_{k-1},p_k)\neq 0\text{ and } K(p_k,p_{k-1})\neq 0
	\end{equation}
	or
	\begin{equation}
		\label{want_show_not_true}
		K(p_{k-1},p_k)\neq 0\text{ and } K(p_k,p_{k-1})= 0.
	\end{equation}
	We first want to show that \eqref{want_show_not_true} never occurs, that is, there is no edge $(p_{k-1},p_k)$ of $p$ that satisfies \eqref{want_show_not_true}. Indeed, suppose for a contradiction that there was an edge $(p_{k-1},p_k)$ such that $K(p_{k-1},p_k)\neq 0$ and $K(p_k,p_{k-1})= 0$. The latter implies, by Lemma~\ref{prop_one_zero_implies_many_nonzeros}, that 
	\begin{equation*}
		K(p_k,p_{k+1})\neq 0.
	\end{equation*}
	But since
	\begin{equation*}
		K[p]=K(p_{k-1},p_k)K(p_k,p_{k+1})K(p_{k+1},p_{k-1})=0,
	\end{equation*}
	it must be that $K(p_{k+1},p_{k-1})=0$, which by Lemma~\ref{prop_one_zero_implies_many_nonzeros} implies
	\begin{equation*}
		K(p_k,p_{k-1})\neq 0,
	\end{equation*}
	a contradiction. Therefore, for an edge $(p_{k-1},p_k)$ of $p$, it is indeed only scenarios \eqref{possibility_1} and \eqref{possibility_2} that are possible. Since $K[p]=0$, there is most certainly at least one edge $(p_{j-1},p_j)$ that satisfies \eqref{possibility_1}, i.e., 
	\begin{equation}
		\label{eq_all_zeroz}
		K(p_{j-1},p_j)=K(p_j,p_{j-1})=0,
	\end{equation}
	in which case, by the second part of Lemma~\ref{lemma_det_equivalence_w_easy_consequence}, at least one of $Q(p_{j-1},p_j)$ or $Q(p_j,p_{j-1})$ must be zero. In fact, both quantities must be zero, that is,
	\begin{equation*}
		Q(p_{j-1},p_j)=Q(p_j,p_{j-1})=0,
	\end{equation*}
	for otherwise, by Lemma~\ref{lemma_3cycle_only_one_Case}, we would have $p$ belonging to only one Case.
	By Lemma~\ref{prop_one_zero_implies_many_nonzeros}, \eqref{eq_all_zeroz} also implies
	\begin{equation*}
		K(p_{j-1},p_{j+1})\neq 0,\enspace K(p_{j+1},p_{j-1})\neq 0,\enspace K(p_{j+1},p_j)\neq 0\text{ and } K(p_j,p_{j+1})\neq 0,
	\end{equation*}
	in which case, by the second part of Lemma~\ref{lemma_det_equivalence_w_easy_consequence},
	\begin{equation*}
		Q(p_{j-1},p_{j+1})\neq 0,\enspace Q(p_{j+1},p_{j-1})\neq 0,\enspace Q(p_{j+1},p_j)\neq 0\text{ and } Q(p_j,p_{j+1})\neq 0,
	\end{equation*}
	also, which proves the uniqueness claim of the lemma. \qed
\end{proof}

\begin{comment}
	\begin{remark}
		\todo{is this necessary?}There is an interesting remark to be made, namely that under the hypothesis that the pair of determinantally equivalent functions $K$ and $Q$ are of class $\mathcal{D}$, there does not exist a $3$-cycle $p=(p_0,p_1,p_2,p_0)$ in $\Lambda$ such that
		\begin{equation*}
			K(p_0,p_1)=K(p_1,p_2)=K(p_2,p_0)=Q(p_0,p_1)=Q(p_1,p_2)=Q(p_2,p_0)=0
		\end{equation*}
		and
		\begin{equation*}
			K(p_1,p_0)=K(p_2,p_1)=K(p_0,p_2)=Q(p_1,p_0)=Q(p_2,p_1)=Q(p_0,p_2)=0.
		\end{equation*}
	\end{remark}
\end{comment}

The two preceding lemmas imply that if two $3$-cycles $p$ and $q$ on the same four vertices belong exclusively to Case 1 and Case 2, respectively, then $K\neq 0$ on their shared edge. More precisely, we have the following corollary.

\begin{corollary}
	\label{cor_impossible_zeroedge_shared_w_Case1_Case2}
	Let $\Lambda$ be a set, $\mathbb{F}$ a field, and let $K,Q:\Lambda^2\to\mathbb{F}$ be a pair of determinantally equivalent functions of class $\mathcal{D}$. Let $p=(p_i)_{i=1}^4$ and $q=(q_i)_{i=1}^4$ be two distinct $3$-cycles in some subset $\mathcal{M}\subseteq\Lambda$ with $|\mathcal{M}|=4$, such that $p$ is not $q$ in reverse. Denote by $$\mathcal{I}\coloneqq\{p_0,p_1,p_2\}\cap\{q_0,q_1,q_2\}$$ the set of common vertices of $p$ and $q$.
	\begin{enumerate}[\itshape(i)]
		\item If $p$ and $q$ belong exclusively to Case 1 and Case 2, respectively, then $$K(x,y)\neq 0\quad\forall x,y\in\mathcal{I}\text{ distinct.}$$
		\item If $p$ and $q$ both belong exclusively to Case 1, or both belong exclusively to Case 2, then either $$K(x,y)\neq 0\quad\forall x,y\in\mathcal{I}\text{ distinct}$$ or $$K(x,y)=0\text{ and }K(y,x)\neq 0\text{ for some $x,y\in\mathcal{I}$ distinct.}$$ 
		\item If $p$ and $q$ each belongs to both Case 1 and Case 2, then either $$K(x,y)\neq 0\quad\forall x,y\in\mathcal{I}\text{ distinct}$$ or $$K(x,y)=0\quad\forall x,y\in\mathcal{I}\text{ distinct}.$$
	\end{enumerate}	
\end{corollary}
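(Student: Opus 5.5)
Since $p$ and $q$ are distinct $3$-cycles in a $4$-element set $\mathcal{M}$ and $p$ is not $q$ reversed, their vertex sets differ, so $|\mathcal{I}|=2$; write $\mathcal{I}=\{x,y\}$. The statement therefore only concerns the pair of values $K(x,y)$, $K(y,x)$ on the shared edge. The strategy is to classify this pair separately for each of $p$ and $q$ using Lemmas~\ref{lemma_3cycle_only_one_Case} and \ref{lemma_3cycle_both_Cases}, and then compare the two classifications.

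First we record the constraints coming from a single cycle. Let $r$ be a $3$-cycle containing $x,y$. If $r$ belongs only to Case 1 and $K(x,y)=0$, then Lemma~\ref{lemma_3cycle_only_one_Case}(i) gives $Q(x,y)=0$ and $Q(y,x)\neq 0$. If instead $r$ belongs only to Case 2 and $K(x,y)=0$, part (ii) gives $Q(x,y)\neq 0$ and $Q(y,x)=0$. The same statements hold with $x$ and $y$ swapped, since either $(x,y)$ or $(y,x)$ is an edge of $r$, and the lemma applies to whichever orientation is the zero edge. In both of these single-Case situations, $K(x,y)$ and $K(y,x)$ cannot both vanish, because the lemma forces $K\neq0$ on the reverse of any zero edge. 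If $r$ belongs to both Cases and $K$ vanishes somewhere on $\{x,y\}$, then Lemma~\ref{lemma_3cycle_both_Cases} applies, provided we first check that $K[r]=0$ or $K'[r]=0$; we can arrange this by reversing $r$ if necessary. That lemma gives $K(x,y)=K(y,x)=0$. Here one must be careful that the zero lies on $\{x,y\}$: the lemma says the doubly-zero edge is the unique edge where $K$ vanishes, so any zero on $\{x,y\}$ forces that edge to be $\{x,y\}$.

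Now we combine the two cycles. For (i), suppose $K(x,y)=0$ for some ordering. Applying the single-Case constraints to $p$ gives $Q(y,x)\neq0$, while applying them to $q$ gives $Q(y,x)=0$, a contradiction. Hence $K\neq 0$ on both orientations of the shared edge. For (ii), the single-Case constraints show that $K$ cannot vanish in both directions, so either $K$ is nonzero in both directions or it vanishes in exactly one, which is the stated dichotomy. For (iii), the both-Cases analysis shows that any zero forces zeros in both directions, which gives the dichotomy. The main subtlety is the bookkeeping of edge orientation: whether the shared pair appears in $p$ as $(x,y)$ or as $(y,x)$, and correspondingly whether one should apply the lemmas to $p$ or to its reverse $p'$. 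This is harmless, because belonging to Case 1 or Case 2 is invariant under reversal of the cycle.
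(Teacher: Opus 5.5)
Your proposal is correct and follows essentially the same route as the paper: you reduce to the single shared edge $\{x,y\}$, obtain (i) from the clash $Q(y,x)\neq 0$ versus $Q(y,x)=0$ given by parts (i) and (ii) of Lemma~\ref{lemma_3cycle_only_one_Case}, and read off (ii) and (iii) from Lemmas~\ref{lemma_3cycle_only_one_Case} and \ref{lemma_3cycle_both_Cases}. Your handling of orientation by passing to reversed cycles, and your check of the hypothesis $K[r]=0$ before applying Lemma~\ref{lemma_3cycle_both_Cases}, make explicit what the paper covers with ``the argument is symmetric''; for (ii) the paper instead notes directly that $K(x,y)=K(y,x)=0$ forces $K[p]=K'[p]$ and applies Lemma~\ref{lemma_3cycle_in_both_cases_condition}, which is the fact underlying the step you cite.
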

\begin{proof}
	For simplicity, set $\mathcal{M}\coloneqq \{1,2,3,4\}$ and set $p=p^{(1)}$ and $q=p^{(2)}$ from \eqref{three_cycle_labellings}. Thus, $$\mathcal{I}=\{1,2\}.$$
	We prove \textit{(i)}: The argument is symmetric, and so we need only consider the case where $K(1,2)=0$. Since $(1,2)$ is an edge of $p^{(1)}$, which we are assuming to belong exclusively to Case 1, the first part of Lemma~\ref{lemma_3cycle_only_one_Case} implies $Q(2,1)\neq 0$. 
	
	However, $(1,2)$ is also an edge of $p^{(2)}$, which we have assumed to belong exclusively to Case 2. Hence, $K(1,2)=0$ implies, by the second part of Lemma~\ref{lemma_3cycle_only_one_Case}, that $Q(2,1)=0$, a contradiction.
	
	We now prove \textit{(ii)}: Suppose that $K(x,y)=0$ for some $x,y\in\mathcal{I}$ distinct. If we also had $K(y,x)=0$, then we would have $$K[p]=K'[p].$$ By Proposition~\ref{lemma_3cycle_in_both_cases_condition}, this implies that $p$ belongs to both Cases, a contradiction.
	
	Claim \textit{(iii)} of the Corollary follows immediately from Lemma~\ref{lemma_3cycle_both_Cases}. \qed
\end{proof}

Using Lemma~\ref{lemma_3cycle_only_one_Case} and Lemma~\ref{lemma_3cycle_both_Cases} we now prove the main result of this section, Proposition~\ref{prop_guarantee_nonexistence_of_problematic_pairs}.

\begin{proof}[Proof of Proposition~\ref{prop_guarantee_nonexistence_of_problematic_pairs}]
	Fix $x,y\in\Lambda$ distinct. There are exactly six possibilities:
	\begin{enumerate}
		\item $(x,y)$ is an edge of a $3$-cycle $p$ in $\Lambda$ such that $K[p]\neq 0$ and $K'[p]\neq 0$. 
		\item $(x,y)$ is an edge of a $3$-cycle $p$ in $\Lambda$ belonging to both Case 1 and Case 2 such that $K[p]=0$. 
		\item $(x,y)$ is an edge of a $3$-cycle $p$ in $\Lambda$ belonging only to Case 1 such that $K[p]=0$.
		\item $(x,y)$ is an edge of a $3$-cycle $p$ in $\Lambda$ belonging only to Case 1 such that $K'[p]=0$.
		\item $(x,y)$ is an edge of a $3$-cycle $p$ in $\Lambda$ belonging only to Case 2 such that $K[p]=0$.
		\item $(x,y)$ is an edge of a $3$-cycle $p$ in $\Lambda$ belonging only to Case 2 such that $K'[p]=0$. 
	\end{enumerate}
	In the first scenario, $K(x,y)\neq 0$ and $K(y,x)\neq 0$ necessarily, which by Lemma~\ref{lemma_det_equivalence_w_easy_consequence} implies that $Q(x,y)\neq 0$ and $Q(y,x)\neq 0$ as well. Therefore, $(x,y)$ pertains to (i) of Proposition~\ref{prop_guarantee_nonexistence_of_problematic_pairs}.
	
	In the second scenario, since $p$ belongs to both Case 1 and Case 2, Lemma~\ref{lemma_3cycle_both_Cases} immediately tells us that $(x,y)$ pertains to either (i) or (ii) of Proposition~\ref{prop_guarantee_nonexistence_of_problematic_pairs}.
	
	Part (i) of Lemma~\ref{lemma_3cycle_only_one_Case} ensures that in the third scenario, $(x,y)$ pertains to either (iii) or (ii) of Proposition~\ref{prop_guarantee_nonexistence_of_problematic_pairs}; and in the fourth, that $(x,y)$ pertains to either (iv) or (ii) of Proposition~\ref{prop_guarantee_nonexistence_of_problematic_pairs}.
	
	In the same way, part (ii) of Lemma~\ref{lemma_3cycle_only_one_Case} ensures that in the fifth scenario, $(x,y)$ pertains to either (v) or (ii) of Proposition~\ref{prop_guarantee_nonexistence_of_problematic_pairs}; and in the sixth, that $(x,y)$ pertains to either (vi) or (ii) of Proposition~\ref{prop_guarantee_nonexistence_of_problematic_pairs}. 
	
	This exhausts all possibilities for $(x,y)\in\Lambda^2$ and so we are done. \qed
\end{proof}

\begin{remark}
	This result ensures for a pair of determinantally equivalent functions $K,Q:\Lambda^2\to\mathbb{F}$ of class $\mathcal{D}$ within the `all $3$-cycles in $\Lambda$ belong to Case 1' framework that for all $x,y\in\Lambda$,
	\begin{equation*}
		K(x,y)=0 \iff Q(x,y)=0.
	\end{equation*}
	It also ensures that within the `all $3$-cycles in $\Lambda$ belong to Case 2' framework, for all $x,y\in\Lambda$,
	\begin{equation*}
		K(y,x)=0 \iff Q(x,y)=0.
	\end{equation*}
	In the remainder of the paper, these two facts are going to be used implicitly and extensively, without any further explanation. For example, if we were working in the `all $3$-cycles in $\Lambda$ belong to Case 1' framework, and we were to assume that $K(x,y)=0$, then we are also implicitly assuming $Q(x,y)=0$ without any further reasoning or justification (provided $K$ and $Q$ is a pair of determinantally equivalent functions of class $\mathcal{D}$, of course).
\end{remark}

The following identity involving class $\mathcal{D}$ determinantally equivalent functions, stated as a lemma below, will prove useful in establishing the well-definedness of the functions $S$ and $\tilde{S}$ from \eqref{eq:def_S} and \eqref{eq:def_S_tilde}, respectively.

\begin{lemma}
	\label{lemma_consistency_of_expressions}
	Let $\Lambda$ be a set, $\mathbb{F}$ a field, and let $K,Q:\Lambda^2\to\mathbb{F}$ be a pair of determinantally equivalent functions of class $\mathcal{D}$. Fix $x,y\in\Lambda$ distinct.
	
	\begin{enumerate}[\itshape(i)]
		\item If $K(x,y)=0=K(y,x)$ and every $3$-cycle in $\Lambda$ belongs to Case 1, then
		\begin{equation*}
			\frac{Q(x,z)Q(z,y)}{K(x,z)K(z,y)}=\frac{Q(x,w)Q(w,y)}{K(x,w)K(w,y)},\qquad z,w\in\Lambda\setminus\{x,y\}\text{ distinct.}
		\end{equation*}
		If $K(x,y)=0$ and $K(y,x)\neq 0$, then for every $z,w\in\Lambda\setminus\{x,y\}$ distinct,
		\begin{equation*}
			\frac{Q(x,z)Q(z,y)}{K(x,z)K(z,y)}=\left( \frac{Q(y,x)}{K(y,x)} \right)^{-1}=\frac{Q(x,w)Q(w,y)}{K(x,w)K(w,y)}.
		\end{equation*}
		\item If $K(x,y)=0=K(y,x)$ and every $3$-cycle in $\Lambda$ belongs to Case 2, then 
		\begin{equation*}
			\frac{Q(x,z)Q(z,y)}{K(z,x)K(y,z)}=\frac{Q(x,w)Q(w,y)}{K(w,x)K(y,w)},\qquad z,w\in\Lambda\setminus\{x,y\}\text{ distinct.}
		\end{equation*}
		If $K(y,x)=0$ and $K(x,y)\neq 0$, then for every $z,w\in\Lambda\setminus\{x,y\}$ distinct,
		\begin{equation*}
			\frac{Q(x,z)Q(z,y)}{K(z,x)K(y,z)}=\left( \frac{Q(y,x)}{K(x,y)} \right)^{-1}=\frac{Q(x,w)Q(w,y)}{K(w,x)K(y,w)}.
		\end{equation*}
	\end{enumerate}
\end{lemma}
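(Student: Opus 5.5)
The plan is to reduce every identity in the statement to the Case~1 (resp.\ Case~2) equalities $K[p]=Q[p]$ (resp.\ $K[p]=Q'[p]$) for suitable $3$-cycles whose $K$-values along the relevant direction are nonzero. The nonvanishing is supplied by Lemma~\ref{prop_one_zero_implies_many_nonzeros}. Only part~(i) needs to be done; part~(ii) follows verbatim with $K(a,b)$ replaced by $K(b,a)$ and Case~1 by Case~2. Throughout, for distinct $a,b$ with $K(a,b)\neq0$ I write $R(a,b)\coloneqq Q(a,b)/K(a,b)$. For the second assertion of~(i) I read the standing hypothesis ``every $3$-cycle belongs to Case~1'' as also in force. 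This reading is forced: otherwise Proposition~\ref{prop_guarantee_nonexistence_of_problematic_pairs}(v) would allow $Q(y,x)=0$, and the middle expression would be undefined.

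\emph{Second assertion of (i).} Suppose $K(x,y)=0\neq K(y,x)$ and fix $z\notin\{x,y\}$. Lemma~\ref{prop_one_zero_implies_many_nonzeros} gives $K(x,z)\neq0$ and $K(z,y)\neq0$. Hence the $3$-cycle $p=(x,z,y,x)$ satisfies $K[p]=K(x,z)K(z,y)K(y,x)\neq0$, while $K'[p]=0$ because it contains the factor $K(x,y)$. Since $p$ is in Case~1, $Q[p]=K[p]\neq0$, so in particular $Q(y,x)\neq0$. Dividing then yields $R(x,z)R(z,y)=R(y,x)^{-1}$. The same argument with $w$ in place of $z$ gives the second equality.

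\emph{First assertion of (i).} Suppose $K(x,y)=K(y,x)=0$ and fix distinct $z,w\notin\{x,y\}$. Lemma~\ref{prop_one_zero_implies_many_nonzeros}, applied to both zero edges, makes all eight values $K(x,z),K(z,x),K(x,w),K(w,x),K(y,z),K(z,y),K(y,w),K(w,y)$ nonzero. By Lemma~\ref{lemma_det_equivalence_w_easy_consequence}(ii), $R(a,b)R(b,a)=1$ on each of these pairs. The goal is $R(x,z)R(z,y)R(y,w)R(w,x)=1$.

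If $K(z,w)\neq0$, I use the Case~1 identities for $(x,z,w,x)$ and $(z,w,y,z)$, both of which have all $K$-factors nonzero:
\[
R(x,z)R(z,w)R(w,x)=1,\qquad R(z,w)R(w,y)R(y,z)=1.
\]
Eliminating $R(z,w)$ and using $R(a,b)R(b,a)=1$ gives exactly the goal. If instead $K(w,z)\neq0$, the same computation with the cycles $(x,w,z,x)$ and $(w,z,y,w)$ works.

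The remaining subcase, $K(z,w)=K(w,z)=0$, is the main obstacle. Here every $3$-cycle on $\{x,y,z,w\}$ contains a doubly-zero pair, so all of them lie in both Cases and carry no information. The first tool is \eqref{equal_determinants_relation} with $n=4$ on $\{x,y,z,w\}$. Terms with fixed points and the $(2,2)$-products agree by the lower-order equalities. The only surviving $4$-cycles are $A=K[(x,z,y,w,x)]$ and $B=K[(x,w,y,z,x)]$, giving $A+B=A_Q+B_Q$. Moreover, $AB$ is a product of $2$-cycle terms, so $AB=A_QB_Q$. Hence $\{A_Q,B_Q\}=\{A,B\}$, and the goal is precisely $A_Q=A$.

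To exclude $A_Q=B\neq A$, I would bring in a fifth point $v\in\Lambda$ and apply Lemma~\ref{graph_lemma_2} with $1,2,3,4=x,z,y,w$ and $5=v$. This writes $A$ as a product of the four $3$-cycles $(x,z,v),(z,y,v),(y,w,v),(w,x,v)$ divided by $2$-cycle terms. Each of these $3$-cycles has $Q[\cdot]=K[\cdot]$ by Case~1, which would give $A_Q=A$. The delicate part is choosing $v$ so that $K(a,v)$ and $K(v,a)$ are nonzero for all $a\in\{x,y,z,w\}$. I would try to extract this from property~$\mathcal{D}$ applied to the $2\times2$ minors built from $x,y,z,w,v$, using the known zero pattern. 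If no such $v$ exists, for instance when $|\Lambda|=4$, the fallback is the following. The conclusion of (i) is only needed when $\Lambda\setminus\{x,y\}$ contains two points. In this degenerate configuration, $A_Q=B$ is exactly the transposed alternative, so I would check whether the Case~1 assumption on the $3$-cycles through a fifth point is what forbids it.
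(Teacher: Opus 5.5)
For $|\Lambda|\ge 5$ your proposal is correct, and in its two hard steps it matches the paper. The second assertion is proved exactly as in the paper: Case~1 is applied to $(x,z,y,x)$, whose $K$-product is nonzero, and your reading of the standing Case~1 hypothesis is the one the paper's proof uses. In the doubly degenerate subcase $K(z,w)=K(w,z)=0$, the paper likewise passes to a fifth point and uses Lemma~\ref{graph_lemma_2}.

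You genuinely differ in the subcase where $K(z,w)\neq0$ or $K(w,z)\neq0$. The paper rewrites the goal as $K[q]=Q[q]$ for $q=(x,z,y,w,x)$ and splits into three sub-subcases. It invokes Lemma~5.2 of \cite{mantelos2026determinantally} when $K(z,w)K(w,z)\neq0$, and Lemma~\ref{graph_lemma_1} when exactly one of the two vanishes. You instead eliminate $R(z,w)$ between the Case~1 triangles $(x,z,w,x)$ and $(z,w,y,z)$, or the mirrored pair. This needs only one direction of $\{z,w\}$ to be nonzero, so it settles all three sub-subcases at once. It also uses nothing beyond Lemma~\ref{lemma_det_equivalence_w_easy_consequence}(ii) and Lemma~\ref{prop_one_zero_implies_many_nonzeros}, which makes it a clean simplification.

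The choice of $v$ is not delicate: any $v\in\Lambda\setminus\{x,y,z,w\}$ works. Apply Lemma~\ref{prop_one_zero_implies_many_nonzeros} to the four zeros $K(x,y)$, $K(y,x)$, $K(z,w)$, $K(w,z)$. This gives $K(x,v)$, $K(v,y)$, $K(y,v)$, $K(v,x)$, $K(z,v)$, $K(v,w)$, $K(w,v)$, $K(v,z)$ all nonzero, and the matching $Q$-products are nonzero by Lemma~\ref{lemma_det_equivalence_w_easy_consequence}(ii). This is precisely the paper's argument.

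Your fallback for $|\Lambda|=4$ cannot work, since there is no fifth point whose triangles could be invoked. In fact no argument can work, because the first assertion of (i) is false there. Take $\mathbb{F}=\mathbb{Q}$ and $\Lambda=\{1,2,3,4\}$ with
\[
\bigl(K(a,b)\bigr)_{a,b=1}^{4}=\begin{pmatrix}1&0&1&2\\0&1&3&4\\1&1&1&0\\1&2&0&1\end{pmatrix},\qquad Q(a,b)\coloneqq K(b,a).
\]
This example satisfies every hypothesis:
\begin{itemize}
\item $K$ and $Q$ are determinantally equivalent, since $Q$ is a transposition of $K$.
\item The six $2\times2$ minors of $K$ with complementary row and column pairs are, up to sign, $2,1,2,3,2,4$. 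Hence $K$ is of class $\mathcal{D}$, and so is $Q$.
\item Each $3$-subset of $\Lambda$ contains $\{1,2\}$ or $\{3,4\}$, and $K,Q$ vanish in both directions on these pairs. So $K[p]=K'[p]=Q[p]=Q'[p]=0$ for every $3$-cycle $p$, and every $3$-cycle is in Case~1.
\end{itemize}
Yet for $x=1$, $y=2$, $z=3$, $w=4$ the two sides of the identity are $3$ and $1$. In your notation $A=4$ and $B=A_Q=12$, so the alternative $A_Q=B\neq A$ that you wanted to exclude really occurs. With $Q=K$ instead, the same $K$ also breaks the first assertion of (ii): the two sides are $1/3$ and $1$.

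The paper's proof has the same hole: it simply fixes $\xi\in\Lambda\setminus\{x,y,z,w\}$. So these assertions hold only for $|\Lambda|\ge5$. In the four-point configuration, your dichotomy $\{A_Q,B_Q\}=\{A,B\}$ is the right substitute. If $A_Q=A$, then $Q$ is a conjugation of $K$; if $A_Q=B$, it is a conjugation of the transpose. The main theorem therefore survives, but via this case distinction rather than via the lemma.
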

\begin{proof}
	\begin{comment}
		To prove the first part, simply note 
		\begin{align*}
			\frac{Q(x,z)Q(z,y)}{K(x,z)K(z,y)}=\frac{Q(x,z)Q(z,y)Q(y,x)}{K(x,z)K(z,y)K(y,x)}\cdot \left( \frac{Q(y,x)}{K(y,x)} \right)^{-1}&=\left( \frac{Q(y,x)}{K(y,x)} \right)^{-1}\\
			&=\frac{Q(x,w)Q(w,y)Q(y,x)}{K(x,w)K(w,y)K(y,x)}\cdot\left( \frac{Q(y,x)}{K(y,x)} \right)^{-1}\\
			&=\frac{Q(x,w)Q(w,y)}{K(x,w)K(w,y)},
		\end{align*}
		using the fact that $(x,z,y,x)$ and $(x,w,y,x)$ both belong to Case 1. One proves the second part analogously.
	\end{comment}
	For the first part, we begin by making the following observation: since $K(x,y)=0$, Lemma~\ref{prop_one_zero_implies_many_nonzeros} ensures
	\begin{equation}
		\label{eqn_allnonzero_implications}
		K(x,z)\neq 0,\enspace K(z,y)\neq 0,\enspace K(x,w)\neq 0,\enspace K(w,y)\neq 0,
	\end{equation}
	which, in turn, ensures that all the quantities appearing in the first part of the lemma are well-defined.
	
	Next, since we also have $K(y,x)=0$, Lemma~\ref{prop_one_zero_implies_many_nonzeros} further ensures
	\begin{equation*}
		K(y,z)\neq 0,\enspace K(z,x)\neq 0, \enspace K(y,w)\neq 0,\enspace K(w,x)\neq 0.
	\end{equation*}
	
	We can thus write
	
	\begin{align*}
		\frac{Q(x,z)Q(z,y)}{K(x,z)K(z,y)}&=\frac{Q(x,z)Q(z,y)Q(y,w)Q(w,x)}{K(x,z)K(z,y)K(y,w)K(w,x)}\cdot \left( \frac{Q(y,w)Q(w,x)}{K(y,w)K(w,x)} \right)^{-1}\\
		&=\frac{Q(x,z)Q(z,y)Q(y,w)Q(w,x)}{K(x,z)K(z,y)K(y,w)K(w,x)}\cdot\frac{Q(x,w)Q(w,y)}{K(x,w)K(w,y)} &&\text{(by Lemma~\ref{lemma_det_equivalence_w_easy_consequence})}.
	\end{align*}
	To simplify notation, we henceforth make use of the following identification:
	\begin{equation*}
		x=1,\enspace y=2,\enspace z=3,\enspace w=4,
	\end{equation*}
	in which case, by recalling \eqref{four_cycle_labellings} and \eqref{three_cycle_labellings}, the previous equation becomes
	\begin{equation}
		\label{eqn_immediate_follow}
		\frac{Q(1,3)Q(3,2)}{K(1,3)K(3,2)}=\frac{Q[q^{[3]}]}{K[q^{[3]}]}\cdot\frac{Q(1,4)Q(4,2)}{K(1,4)K(4,2)}.
	\end{equation}
	Now, if $K(3,4)\neq 0$ and $K(4,3)\neq 0$, then we can invoke Lemma~5.2 from \cite{mantelos2026determinantally} (with a simple relabelling of vertices) to get
	\begin{equation*}
		K[q^{[3]}]=\frac{K'[p^{(3)}]K'[p^{(4)}]}{K(3,4)K(4,3)}=\frac{Q'[p^{(3)}]Q'[p^{(4)}]}{Q(3,4)Q(4,3)}=Q[q^{[3]}],
	\end{equation*}
	by Lemma~\ref{lemma_det_equivalence_w_easy_consequence} and the fact that all $3$-cycles are assumed to belong to Case 1. Thus,
	\begin{equation}
		\label{eqn_wts_now}
		\frac{Q(x,z)Q(z,y)}{K(x,z)K(z,y)}\left( =\frac{Q(1,3)Q(3,2)}{K(1,3)K(3,2)}=\frac{Q(1,4)Q(4,2)}{K(1,4)K(4,2)}=\right)\frac{Q(x,w)Q(w,y)}{K(x,w)K(w,y)},
	\end{equation}
	as required.
	
	Let us now suppose we instead had $K(3,4)= 0$ and $K(4,3)\neq 0$. Using the fact that the $3$-cycles $p^{(4)}$ and $p^{(3)}$ both belong to Case 1, we obtain through an application of Lemma~\ref{graph_lemma_1} (with a simple relabelling of vertices)
	\begin{align*}
		K[q^{[3]}]&=K(1,3)K(3,1)\cdot K(4,1)K(1,4)\cdot \frac{K'[p^{(4)}]}{K[p^{(3)}]}\\
		&=Q(1,3)Q(3,1)\cdot Q(4,1)Q(1,4)\cdot \frac{Q'[p^{(4)}]}{Q[p^{(3)}]} &&\text{(by Lemma~\ref{lemma_det_equivalence_w_easy_consequence})}\\
		&=Q[q^{[3]}].
	\end{align*}
	Equation \eqref{eqn_wts_now} now immediately follows from \eqref{eqn_immediate_follow}.
	
	\begin{comment}
		Since we are assuming $K(1,2)=K(2,1)=0=Q(2,1)=Q(1,2)$, in which case
		\begin{equation}
			\label{eq_always_true}
			K[q^{[1]}]=K'[q^{[1]}]=0=Q[q^{[1]}]=Q'[q^{[1]}]
		\end{equation}
		and
		\begin{equation*}
			K[q^{[2]}]=K'[q^{[2]}]=0=Q[q^{[2]}]=Q'[q^{[2]}],
		\end{equation*}
		the first part of Lemma 6.6 in \cite{mantelos2026determinantally} yields
		\begin{equation*}
			K[q^{[3]}]+K'[q^{[3]}]=Q[q^{[3]}]+Q'[q^{[3]}].
		\end{equation*}
		Since, by Lemma~\ref{lemma_det_equivalence_w_easy_consequence}, we also have
		\begin{equation*}
			K[q^{[3]}]\cdot K'[q^{[3]}]=Q[q^{[3]}]\cdot Q'[q^{[3]}],
		\end{equation*}
		it follows that
		\begin{equation*}
			K[q^{[3]}]=Q[q^{[3]}]\text{ and } K'[q^{[3]}]=Q'[q^{[3]}],\text{ or, } K[q^{[3]}]=Q'[q^{[3]}]\text{ and } K'[q^{[3]}]=Q[q^{[3]}].
		\end{equation*}
		If the former is true, then \eqref{eqn_wts_now} holds immediately, and we are done. Suppose now the latter is true. 
	\end{comment}
	
	Let us now suppose that $K(3,4)\neq 0$ and $K(4,3)= 0$. Applying Lemma~\ref{graph_lemma_1} again in the same fashion, we get
	\begin{equation*}
		K'[q^{[3]}]=Q'[q^{[3]}].
	\end{equation*}
	Since, by Lemma~\ref{lemma_det_equivalence_w_easy_consequence},
	\begin{equation*}
		K[q^{[3]}]K'[q^{[3]}]=Q[q^{[3]}]Q'[q^{[3]}],
	\end{equation*}
	it follows that $K[q^{[3]}]=Q[q^{[3]}]$. Equation \eqref{eqn_wts_now} once again immediately follows from \eqref{eqn_immediate_follow}.
	
	Finally, let us suppose that $K(3,4)= 0$ and $K(4,3)= 0$. Fix an element $\xi\in\Lambda\setminus\{x,y,z,w\}$. Lemma~\ref{prop_one_zero_implies_many_nonzeros} then ensures that
	\begin{equation*}
		K(3,\xi)\neq 0,\enspace K(\xi,3)\neq 0,\enspace K(4,\xi)\neq 0,\enspace K(\xi,4)\neq 0.
	\end{equation*}
	Since we also have $K(1,2)=0=K(2,1)$, it also follows, again by Lemma~\ref{prop_one_zero_implies_many_nonzeros}, that
	\begin{equation*}
		K(2,\xi)\neq 0,\enspace K(\xi,2)\neq 0,\enspace K(1,\xi)\neq 0,\enspace K(\xi,1)\neq 0.
	\end{equation*}
	With these non-vanishing terms in our disposal, and using the fact that all $3$-cycles in $\Lambda$ belong to Case 1, we can apply Lemma~\ref{graph_lemma_2} (with a simple relabelling of vertices) in conjunction with the second part of Lemma~\ref{lemma_det_equivalence_w_easy_consequence} to get
	\begin{equation*}
		K[q^{[3]}]=Q[q^{[3]}].
	\end{equation*}
	
	Once again, equation \eqref{eqn_wts_now} follows from \eqref{eqn_immediate_follow}.
	
	Finally, suppose we had $K(x,y)=0$ (which we saw earlier implies \eqref{eqn_allnonzero_implications}) and $K(y,x)\neq 0$, then
	\begin{equation*}
		\frac{Q(x,z)Q(z,y)Q(y,x)}{K(x,z)K(z,y)K(y,x)}=1 
	\end{equation*}
	and
	\begin{equation*}
		\frac{Q(x,w)Q(w,y)Q(y,x)}{K(x,w)K(w,y)K(y,x)}=1,
	\end{equation*}
	since every $3$-cycle in $\Lambda$ is assumed to belong to Case 1. The required equations of the lemma immediately follow.
	
	Analogous steps can be taken to prove the second part of the lemma. \qed
\end{proof}

\begin{remark}
	Let $p$ and $q$ be two distinct $3$-cycles in $\Lambda$, each belonging to both Case 1 and Case 2 and satisfying $K[p]=0=K[q]$. By Lemma~\ref{lemma_3cycle_both_Cases}, each has a unique edge on which $K=0$. Suppose that these two unique edges coincide, and denote this common edge by $(x,y)\in\Lambda^2$. Then necessarily $$p=(z,x,y,z)\text{ and }q=(w,x,y,w),$$ for some $z,w\in\Lambda\setminus\{x,y\}$ distinct. 
	
	It then follows from Lemma~\ref{lemma_consistency_of_expressions} that, in the `all $3$-cycles belong to Case 1' framework,
	\begin{equation*}
		\frac{Q(x,z)Q(z,y)}{K(x,z)K(z,y)}=\frac{Q(x,w)Q(w,y)}{K(x,w)K(w,y)};
	\end{equation*}
	and 
	\begin{equation*}
		\frac{Q(x,z)Q(z,y)}{K(z,x)K(y,z)}=\frac{Q(x,w)Q(w,y)}{K(w,x)K(y,w)},
	\end{equation*}
	in the `all $3$-cycles belong to Case 2' framework.
\end{remark}

With the above structural results at hand, we are now in the position to discuss the well-definedeness of the functions $S$ and $\tilde{S}$ from \eqref{eq:def_S} and \eqref{eq:def_S_tilde}, respectively.

\begin{lemma} 
	\label{lemma_welldefined}
	Let $\Lambda$ be a set, $\mathbb{F}$ a field, and let $K,Q:\Lambda^2\to\mathbb{F}$ be a pair of determinantally equivalent functions of class $\mathcal{D}$. If all $3$-cycles in $\Lambda$ belong to Case 1 (resp. Case 2), then the function $S$ from \eqref{eq:def_S} (resp. the function $\tilde{S}$ from \eqref{eq:def_S_tilde}) is well-defined.
\end{lemma}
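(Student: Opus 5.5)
We treat the Case 1 framework and the function $S$ from \eqref{eq:def_S}; the Case 2 statement for $\tilde{S}$ is the same argument with $K(x,y)$ replaced by $K(y,x)$ throughout, using part (ii) of Lemma~\ref{lemma_consistency_of_expressions}. Well-definedness has two parts. First, every ordered pair $(x,y)$ must fall under at least one clause of \eqref{eq:def_S}. Second, whenever a clause depends on a choice (the third vertex $z$), or two clauses overlap, the resulting values must agree. Nonvanishing of every denominator is part of the first point.

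\textbf{Step 1: every pair is covered and every denominator is nonzero.} Fix distinct $x,y$ (the case $x=y$ is trivial). If $K(x,y)\neq0$, the second clause applies and nothing more is needed. Suppose $K(x,y)=0$. If $|\Lambda|\ge3$, pick any $z\notin\{x,y\}$ and consider the $3$-cycle $p=(z,x,y,z)$. Then $K[p]=0$, and by hypothesis $p$ belongs to Case 1. Either $p$ belongs only to Case 1, or to both Cases.
\begin{itemize}
\item In the first situation, Lemma~\ref{lemma_3cycle_only_one_Case}(i) gives $K(y,x)\neq0$ and $Q(y,x)\neq 0$. So the third clause makes sense and $Q(y,x)/K(y,x)$ is invertible.
\item In the second situation, Lemma~\ref{lemma_3cycle_both_Cases} shows that $(x,y)$ is the unique edge with $K(x,y)=K(y,x)=0$, and that $K(x,z),K(z,y)\neq0$. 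So the fourth clause makes sense.
\end{itemize}
If $|\Lambda|=2$, there are no $3$-cycles. Then $K(x,y)=0$ forces the statement to be read via Lemma~\ref{lemma_det_equivalence_w_easy_consequence} together with a convention, say $S(x,y)=1$ when all four values vanish. I would flag this degenerate case and dispose of it separately.

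\textbf{Step 2: the clauses are mutually exclusive.} The second clause requires $K(x,y)\neq 0$, while the third and fourth require $K(x,y)=0$, so these cannot overlap. The third and fourth clauses could only overlap if $(x,y)$ were an edge both of some $3$-cycle $(z,x,y,z)$ lying only in Case 1 and of another $(w,x,y,w)$ lying in both Cases. The first forces $K(y,x)\ne0$ by Lemma~\ref{lemma_3cycle_only_one_Case}, and the second forces $K(y,x)=0$ by Lemma~\ref{lemma_3cycle_both_Cases}, a contradiction. Hence whether the third or fourth clause applies depends only on whether $K(y,x)$ vanishes, not on any choice of $z$.

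\textbf{Step 3: the fourth clause does not depend on $z$.} This is the only step with real content, and it is exactly the first assertion of Lemma~\ref{lemma_consistency_of_expressions}(i). When $K(x,y)=0=K(y,x)$ and all $3$-cycles are in Case 1, the quotient $\frac{Q(x,z)Q(z,y)}{K(x,z)K(z,y)}$ is the same for all $z\notin\{x,y\}$. For completeness I would also note the second assertion of that lemma: when $K(y,x)\neq0$, the two-step expression coincides with $(Q(y,x)/K(y,x))^{-1}$. So in the third clause one could equally use the fourth-clause formula, which is convenient later for the cocycle verification.

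I expect the main obstacle to lie not here but in the lemma itself, since its $K(3,4)=K(4,3)=0$ subcase needs a fifth point $\xi$ and Lemma~\ref{graph_lemma_2}. In the present proof, the delicate point is only Step 2's exclusivity argument together with the degenerate small-$\Lambda$ bookkeeping.
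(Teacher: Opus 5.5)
Your proposal is correct relative to the lemmas it cites, and it follows the paper's proof essentially step by step. Denominators are shown nonzero via Lemmas~\ref{prop_one_zero_implies_many_nonzeros}, \ref{lemma_3cycle_only_one_Case} and \ref{lemma_3cycle_both_Cases}, the fourth clause is shown independent of $z$ via Lemma~\ref{lemma_consistency_of_expressions}(i), and the third and fourth clauses are shown mutually exclusive. Two small differences: you phrase exclusivity as $K(y,x)\neq0$ versus $K(y,x)=0$ rather than the paper's ``$K[r]=K'[r]$ forces both Cases'', and you add an explicit coverage check that the paper leaves implicit.

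One caution on your small-$\Lambda$ remark, which applies equally to the paper. The degenerate range is $|\Lambda|\le 4$, not just $|\Lambda|=2$: Lemma~\ref{prop_one_zero_implies_many_nonzeros} needs a fourth point, and the proof of Lemma~\ref{lemma_consistency_of_expressions} needs the fifth point $\xi$ you mention. For instance, over $\mathbb{Q}$ take $\Lambda=\{1,2,3,4\}$, $Q=K^{T}$, $K(1,2)=K(2,1)=K(3,4)=K(4,3)=0$, $K(2,4)=2$, $K(4,2)=3$ and all other off-diagonal values $1$. Both functions are of class $\mathcal{D}$ and every $3$-cycle lies in both Cases, yet the fourth clause gives $S(1,2)=1$ via $z=3$ and $S(1,2)=2/3$ via $z=4$. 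So there the statement itself fails, and no convention can rescue it.
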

\begin{proof}
	We first show that for every edge $(x,y)\in\Lambda^2$ of a $3$-cycle in $\Lambda$ belonging to both Case 1 and Case 2 such that $K(x,y)=0$, the expression
	\begin{equation}
		\label{welldefined_z_in_def_of_S}
		\frac{Q(x,z)Q(z,y)}{K(x,z)K(z,y)},
	\end{equation}
	appearing in the definition of $S$, is well-defined for every $z\in\Lambda\setminus\{x,y\}$; and that for every edge $(x,y)\in\Lambda^2$ of a $3$-cycle in $\Lambda$ belonging to both Case 1 and Case 2 such that $K(y,x)=0$, the expression
	\begin{equation}
		\label{welldefined_z_in_def_of_tildeS}
		\frac{Q(x,z)Q(z,y)}{K(z,x)K(y,z)},
	\end{equation}
	appearing in the definition of $\tilde{S}$, is well-defined for every $z\in\Lambda\setminus\{x,y\}$.

	%Lemma~\ref{??} establishes that the choice of element $z\in\Lambda\setminus\{x,y\}$ appearing in the definition of both $S$ and $\tilde{S}$ is not important; in other words, it establishes that for every zero-edge $(x,y)$ of a $3$-cycle in $\Lambda$ (in the sense that $K(x,y)=0$) belonging to both Case 1 and Case 2, 
	%\begin{equation}
	%\frac{Q(x,z)Q(z,y)}{K(x,z)K(z,y)},
	%\end{equation}
	%for all $w\in\Lambda\setminus\{x,y\}$ not equal to $z$ (from the definition of $S$);
	%and that for every zero-edge $(x,y)$ of a $3$-cycle in $\Lambda$ (in the sense that $K(y,x)=0$) belonging to both Case 1 and Case 2,
	%\begin{equation}
	%\frac{Q(x,z)Q(z,y)}{K(z,x)K(y,z)}=\frac{Q(x,w)Q(w,y)}{K(w,x)K(y,w)},
	%\end{equation}
	%for all $w\in\Lambda\setminus\{x,y\}$ not equal to $z$ (from the definition of $\tilde{S}$).

	To that end, first notice how in \eqref{welldefined_z_in_def_of_S} the denominator $K(x,z)K(z,y)$ is non-zero: $K(x,y)=0$ implies, by Lemma~\ref{prop_one_zero_implies_many_nonzeros}, that $K(x,z)\neq 0$ and $K(z,y)\neq 0$. In the same way, $K(y,x)=0$, in conjunction with Lemma~\ref{prop_one_zero_implies_many_nonzeros}, ensures that the denominator appearing in \eqref{welldefined_z_in_def_of_tildeS} is also non-zero. 
	
	We also need to show that if $(x,y)\in \Lambda^2$ is an edge of a $3$-cycle belonging only to Case 1 such that $K(x,y)=0$, then the expression
	\begin{equation*}
		\left( \frac{Q(y,x)}{K(y,x)} \right)^{-1},
	\end{equation*}
	appearing in the definition of $S$, is well-defined. Indeed, it is: the first part of Lemma~\ref{lemma_3cycle_only_one_Case} ensures that $Q(y,x)\neq 0$ and $K(y,x)\neq 0$.
	
	In the same way, the second part of Lemma~\ref{lemma_3cycle_only_one_Case} ensures that for every edge $(x,y)\in \Lambda^2$ of a $3$-cycle belonging only to Case 2 such that $K(y,x)=0$, the expression
	\begin{equation*}
		\left( \frac{Q(y,x)}{K(x,y)} \right)^{-1},
	\end{equation*}
	appearing in the definition of $\tilde{S}$, is well-defined.
	
	The fourth clause in the definition of $S$ (resp. $\tilde{S}$), under the framework where all $3$-cycles belong to Case 1 (resp. Case 2), was actually already shown to be well-defined in the remark just after the proof of Lemma~\ref{lemma_consistency_of_expressions}.
	
	Finally, no $3$-cycle belonging to both Case 1 and Case 2 can share an edge on which $K=0$ with a $3$-cycle belonging to only one Case. Indeed, by Lemma~\ref{lemma_3cycle_both_Cases}, a $3$-cycle of the former type has a unique such edge, say $(\alpha,\beta)\in\Lambda^2$, satisfying $$K(\alpha,\beta)=K(\beta,\alpha)=0=Q(\alpha,\beta)=Q(\beta,\alpha).$$ Suppose that a $3$-cycle $r$ belonging to only one Case also contains this edge. Then $$K(\alpha,\beta)=K(\beta,\alpha)=0$$ implies that $K[r]=K'[r]$, and hence, by Lemma~\ref{lemma_3cycle_in_both_cases_condition}, the $3-$cycle $r$ must belong to both Case 1 and Case 2, a contradiction. Consequently, no such overlap can occur, and therefore the clauses in the definitions of $S$ and $\tilde{S}$ are mutually exclusive. \qed
\end{proof}

\section{Proof of Theorem~\ref{main_thm}}
\label{sec_proof_of_main_result}

In this final section, we bring together the tools and results developed in the preceding sections to establish the main theorem. The proof synthesizes the combinatorial framework, the new identities, and the structural properties derived from the `class $\mathcal{D}$' assumption, completing the argument.
\begin{proposition}
	\label{prop_cocycle_property_for_S_tildeS}
	Let $\Lambda$ be a set, $\mathbb{F}$ a field, and let $K,Q:\Lambda^2\to\mathbb{F}$ be a pair of determinantally equivalent functions of class $\mathcal{D}$.
	\begin{enumerate}[\itshape(i)]
		\item If all $3$-cycles in $\Lambda$ belong to Case 1, then the function $S$ from \eqref{eq:def_S} is a cocycle function such that for every $x,y\in\Lambda$,
		\begin{equation}
			\label{S_is_appropriate}
			Q(x,y)=S(x,y)K(x,y).
		\end{equation}	
		\item If all $3$-cycles in $\Lambda$ belong to Case 2, then the function $\tilde{S}$ from \eqref{eq:def_S_tilde} is a cocycle function such that for every $x,y\in\Lambda$,
		\begin{equation}
			\label{tildeS_is_appropriate}
			Q(x,y)=\tilde{S}(x,y)K(y,x).
		\end{equation}
	\end{enumerate}	
\end{proposition}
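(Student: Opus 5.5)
The plan is to prove part (i) directly and then deduce part (ii) by applying (i) to the pair $(K^{T},Q)$, where $K^{T}(x,y)\coloneqq K(y,x)$.

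\textbf{Reduction of (ii) to (i).} The function $K^{T}$ is determinantally equivalent to $K$, since transposition preserves determinants, and hence to $Q$. It is of class $\mathcal{D}$: the $2\times2$ determinant in \eqref{property_D} for $K^{T}$ at $(x,y,z,w)$ is the transpose of the one for $K$ at $(y,x,w,z)$. A $3$-cycle is in Case 2 for $(K,Q)$ exactly when it is in Case 1 for $(K^{T},Q)$, because $K[p]=K^{T}[p']$. Finally, substituting $K^{T}$ for $K$ clause by clause turns \eqref{eq:def_S} into \eqref{eq:def_S_tilde}; for instance, the last clause $Q(x,z)Q(z,y)/(K(z,x)K(y,z))$ is exactly $Q(x,z)Q(z,y)/(K^{T}(x,z)K^{T}(z,y))$. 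So only (i) needs proof, and from now on all $3$-cycles are assumed to be in Case 1. Throughout, Lemma~\ref{lemma_welldefined} guarantees that $S$ is well defined.

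\textbf{The relation \eqref{S_is_appropriate}.} On the diagonal it follows from Lemma~\ref{lemma_det_equivalence_w_easy_consequence}(i). If $K(x,y)\neq0$, it holds by the second clause of \eqref{eq:def_S}. If $K(x,y)=0$, then $Q(x,y)=0$ by Proposition~\ref{prop_guarantee_nonexistence_of_problematic_pairs} (see the remark after it), so both sides vanish whatever the value of $S(x,y)$.

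\textbf{Cocycle property for $1$- and $2$-cycles.} By the remark recalling Proposition~4.3 of \cite{mantelos2026determinantally}, it suffices to check cycles of length $1$, $2$ and $3$; length $1$ is $S(x,x)=1$ by definition. For $2$-cycles we split into three cases.
\begin{itemize}
\item If $K(x,y)$ and $K(y,x)$ are both non-zero, then $S(x,y)S(y,x)=1$ is exactly Lemma~\ref{lemma_det_equivalence_w_easy_consequence}(ii).
\item If $K(x,y)=0\neq K(y,x)$, then every $3$-cycle $p$ through the edge $(x,y)$ has $K[p]=0$. By Lemma~\ref{prop_one_zero_implies_many_nonzeros}, $K'[p]\neq0$, so $p$ lies only in Case 1 (Lemma~\ref{lemma_3cycle_in_both_cases_condition}). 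Thus the third clause applies, and $S(x,y)=(Q(y,x)/K(y,x))^{-1}=S(y,x)^{-1}$.
\item If $K(x,y)=K(y,x)=0$, the fourth clause applies to both orderings, and by Lemma~\ref{lemma_consistency_of_expressions} we may use the same auxiliary vertex $z$ for both. Then
\[
S(x,y)S(y,x)=\frac{Q(x,z)Q(z,x)}{K(x,z)K(z,x)}\cdot\frac{Q(z,y)Q(y,z)}{K(z,y)K(y,z)}=1,
\]
using Lemma~\ref{lemma_det_equivalence_w_easy_consequence}(ii) and the non-vanishing supplied by Lemma~\ref{prop_one_zero_implies_many_nonzeros}.
\end{itemize}

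\textbf{Cocycle property for $3$-cycles.} The key intermediate claim is the following splitting identity: whenever $K(x,y)=0$ and $z\notin\{x,y\}$,
\[
S(x,y)=S(x,z)S(z,y).
\]
To see it, note first that Lemma~\ref{prop_one_zero_implies_many_nonzeros} gives $K(x,z)\neq0$ and $K(z,y)\neq0$, so the right-hand side equals $Q(x,z)Q(z,y)/(K(x,z)K(z,y))$. In the third-clause situation this equals $(Q(y,x)/K(y,x))^{-1}$ by the second statement of Lemma~\ref{lemma_consistency_of_expressions}(i). In the fourth-clause situation it is the defining expression, and it is independent of $z$ by the first statement of that lemma.

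Now take a $3$-cycle $(x,y,z,x)$. If $K$ is non-zero on all three forward edges, then
\[
S(x,y)S(y,z)S(z,x)=\frac{Q[p]}{K[p]}=1
\]
since $p$ is in Case 1. Otherwise, after rotating the cycle we may assume $K(x,y)=0$. By the splitting identity,
\[
S(x,y)S(y,z)S(z,x)=\bigl(S(x,z)S(z,x)\bigr)\bigl(S(z,y)S(y,z)\bigr)=1,
\]
by the $2$-cycle step.

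\textbf{Main obstacle.} The delicate point is the splitting identity: one must check which clause of \eqref{eq:def_S} governs $S(x,y)$ and that its value does not depend on the choice of the auxiliary vertex $z$. This rests entirely on Lemma~\ref{lemma_consistency_of_expressions} together with the mutual exclusivity of clauses established in Lemma~\ref{lemma_welldefined}. Once that identity is in place, the $3$-cycle case reduces to the $2$-cycle case.
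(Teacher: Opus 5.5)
Your proof is correct. For the relation $Q=SK$ and the $1$- and $2$-cycle checks it is the paper's argument; the differences lie in the $3$-cycle step and in part (ii).

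\textbf{The $3$-cycle step.} The paper sorts $3$-cycles into five types: $K[p]\neq0$; both Cases with $K[p]=0$; and only Case~1 with one, two or three edges on which $K=0$. It verifies $S[p]=1$ for each type by a separate computation, using Lemma~\ref{lemma_consistency_of_expressions} in the form \eqref{eq_nice_help} and \eqref{eq_nice_help2} for the one- and two-zero-edge types. You instead isolate one identity, $S(x,y)=S(x,z)S(z,y)$ whenever $K(x,y)=0$. This packages exactly the fourth clause of \eqref{eq:def_S} and the second half of Lemma~\ref{lemma_consistency_of_expressions}(i). After that, any $3$-cycle with a zero forward edge factors into two $2$-cycles. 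So you never need to track the number of zero edges or the Case-membership of the cycle, and the three-zero-edge type needs no separate treatment.

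\textbf{Part (ii).} The paper dismisses (ii) as ``analogous steps''. Your reduction via $(K,Q)\mapsto(K^{T},Q)$ is a genuine improvement: you correctly check that determinantal equivalence, class $\mathcal{D}$, Case membership and each clause of \eqref{eq:def_S} transform into the corresponding data for \eqref{eq:def_S_tilde}.

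The paper's enumeration has the modest advantage of showing the explicit shape of each kind of $3$-cycle. The ingredients are the same, and your route is shorter and more uniform.

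\textbf{A caution that applies equally to your proof and the paper's.} In the fourth-clause situation your splitting identity rests on the $z$-independence in the first half of Lemma~\ref{lemma_consistency_of_expressions}(i). That lemma is proved, in the subcase $K(z,w)=K(w,z)=0$, using a fifth point $\xi$, and it genuinely fails when $|\Lambda|=4$. Take a generic $K$ on $\{1,2,3,4\}$ with $K(1,2)=K(2,1)=K(3,4)=K(4,3)=0$, and take $Q=K^{T}$. Both functions are of class $\mathcal{D}$, and every $3$-cycle lies in both Cases. Yet the two candidate values of $S(1,2)$, with $z=3$ and $z=4$, agree only if $K[q^{[3]}]=K'[q^{[3]}]$. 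Indeed $Q$ is then not a conjugation of $K$, so part (i) itself fails there; the theorem survives only because $Q=K^{T}$. Both arguments therefore tacitly need $|\Lambda|\ge 5$ in that configuration.
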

\begin{proof}
	We only prove part (i) of the proposition, since the proof of part (ii) follows analogous steps.
	
	The well-definedness of the function $S$ was already addressed in Lemma~\ref{lemma_welldefined}, and so we proceed directly with the proof of \eqref{S_is_appropriate}. Fix $x,y\in\Lambda$ distinct, then
	\begin{equation*}
		Q(x,x)=S(x,x)K(x,x)=K(x,x)
	\end{equation*}
	follows from the first part of Lemma~\ref{lemma_det_equivalence_w_easy_consequence}. If $K(x,y)\neq 0$, then
	\begin{equation*}
		S(x,y)K(x,y)=\frac{Q(x,y)}{K(x,y)}K(x,y)=Q(x,y);
	\end{equation*}
	and if $K(x,y)=0$, then $Q(x,y)=0$, in which case
	\begin{equation*}
		Q(x,y)=S(x,y)K(x,y)
	\end{equation*}
	trivially holds. Equation \eqref{S_is_appropriate} is now confirmed.

	We now proceed with the proof of the cocycle property. Let $x,y\in\Lambda$ distinct, then
	\begin{equation*}
		S(x,x)=1
	\end{equation*}
	comes directly from the definition of $S$. 
	
	If $(x,y)$ pertains to configuration (i) of Proposition~\ref{prop_guarantee_nonexistence_of_problematic_pairs}, then we know from the proof that it pertains to the situation where $K(x,y)=0$ and $(x,y)$ is an edge of a $3$-cycle $p$ in $\Lambda$ belonging to both Case 1 and Case 2. As such, $p=(z,x,y,z)$, for some $z\in\Lambda\setminus\{x,y\}$. Since $K(y,x)=0$ and $(y,x)$ is an edge of $p'$ (which also belongs to both Case 1 and Case 2), then by the definition of $S$,
	\begin{equation*}
		S(x,y)S(y,x)=\frac{Q(x,z)Q(z,y)}{K(x,z)K(z,y)}\cdot\frac{Q(y,z)Q(z,x)}{K(y,z)K(z,x)}=1,
	\end{equation*}
	where the last equality follows from the second part of Lemma~\ref{lemma_det_equivalence_w_easy_consequence}.
	
	If $(x,y)$ pertains to configuration (ii) of Proposition~\ref{prop_guarantee_nonexistence_of_problematic_pairs}, then Lemma~\ref{lemma_det_equivalence_w_easy_consequence} again ensures that 
	\begin{equation*}
		S(x,y)S(y,x)=\frac{Q(x,y)}{K(x,y)}\frac{Q(y,x)}{K(y,x)}=1.
	\end{equation*}
	
	If $(x,y)$ pertains to configuration (iii) of Proposition~\ref{prop_guarantee_nonexistence_of_problematic_pairs}, which we know from the proof pertains to the case where $K(x,y)=0$ and $(x,y)$ is an edge of a $3$-cycle in $\Lambda$ belonging only to Case 1, then by the definition of $S$, 
	\begin{equation*}
		S(x,y)=\left( \frac{Q(y,x)}{K(y,x)} \right)^{-1}\frac{Q(y,x)}{K(y,x)}=1.
	\end{equation*}
	
	If $(x,y)$ pertains to configuration (iv) of Proposition~\ref{prop_guarantee_nonexistence_of_problematic_pairs}, which we know from the proof pertains to the case where $K(y,x)=0$ and $(y,x)$ is an edge of a $3$-cycle in $\Lambda$ belonging only to Case 1, we have by the definition of $S$,
	\begin{equation*}
		S(x,y)S(y,x)=\frac{Q(x,y)}{K(x,y)}\left( \frac{Q(x,y)}{K(x,y)} \right)^{-1}=1.
	\end{equation*}
	
	Now that we have established the cocycle property of $S$ for $1$- and $2$-cycles, it remains to prove it for $3$-cycles; we proceed by cases:
	\begin{enumerate}
		\item \textit{$p=(p_i)_{i=0}^3$ is a $3$-cycle in $\Lambda$ such that $K[p]\neq 0$}.
		
		In this case, since $p$ belongs to Case 1,
		\begin{equation*}
			S[p]=\frac{Q[p]}{K[p]}=1.
		\end{equation*}
		
		\item \textit{$p=(p_i)_{i=0}^3$ is a $3$-cycle in $\Lambda$ belonging to both Case 1 and Case 2 such that $K[p]=0$}.
		
		Abiding to the notations of Lemma~\ref{lemma_3cycle_both_Cases}, we have, by the definition of $S$,
		\begin{align*}
			S[p]&=S(p_{j-1},p_j)S(p_j,p_{j+1})S(p_{j+1},p_{j-1}) \\
			&= \frac{Q(p_{j-1},p_{j+1})Q(p_{j+1},p_j)}{K(p_{j-1},p_{j+1})K(p_{j+1},p_j)}\cdot  \frac{Q(p_{j},p_{j+1})}{K(p_{j},p_{j+1})}\cdot\frac{Q(p_{j+1},p_{j-1})}{K(p_{j+1},p_{j-1})}\\
			=1,
		\end{align*}
		where the last equality follows from the second part of Lemma~\ref{lemma_det_equivalence_w_easy_consequence}.
		
		\item \textit{$p=(p_i)_{i=0}^3$ is a $3$-cycle in $\Lambda$ belonging only to Case 1 such that $K[p]=0$, with one edge on which $K=0$ and two edges on which $K\neq 0$}.
		
		This is precisely the setting of the first part of Lemma~\ref{lemma_3cycle_only_one_Case}. Consistent with the notation therein, let $(p_{i-1},p_i)$ be the edge of $p$ on which $K=0$, and $(p_i,p_{i+1})$, $(p_{i+1},p_{i-1})$ the edges on which $K\neq 0$. As such, the aforementioned lemma ensures additionally that $K(p_i,p_{i-1})\neq 0$; and so the first part of Lemma~\ref{lemma_consistency_of_expressions} yields 
		\begin{equation}
			\label{eq_nice_help}
			\left( \frac{Q(p_i,p_{i-1})}{K(p_i,p_{i-1})} \right)^{-1}=\frac{Q(p_{i-1},p_{i+1})Q(p_{i+1},p_i)}{K(p_{i-1},p_{i+1})K(p_{i+1},p_i)}.
		\end{equation}
		By the definition of $S$,
		\begin{equation*}
			S[p]=S(p_{i-1},p_i)S(p_i,p_{i+1})S(p_{i+1},p_{i-1})= \left( \frac{Q(p_i,p_{i-1})}{K(p_i,p_{i-1})} \right)^{-1}\cdot \frac{Q(p_i,p_{i+1})}{K(p_i,p_{i+1})}\cdot \frac{Q(p_{i+1},p_{i-1})}{K(p_{i+1},p_{i-1})}=1,
		\end{equation*}
		where the last equality follows from \eqref{eq_nice_help} in conjunction with Lemma~\ref{lemma_det_equivalence_w_easy_consequence}.
		
		\item \textit{$p=(p_i)_{i=0}^3$ is a $3$-cycle in $\Lambda$ belonging only to Case 1 such that $K[p]=0$, with two edges on which $K=0$ and one edge on which $K\neq0$}.
		
		Suppose that $(p_0,p_1)$ and $(p_1,p_2)$ are the two edges of $p$ on which $K=0$ and $(p_2,p_0)$ the edge on which $K\neq0$. As such, the first part of Lemma~\ref{lemma_3cycle_only_one_Case} ensures that $K(p_1,p_0)\neq 0$; and so we can then invoke the first part of Lemma~\ref{lemma_consistency_of_expressions} to get
		\begin{equation}
			\label{eq_nice_help2}
			\left( \frac{Q(p_1,p_0)}{K(p_1,p_0)} \right)^{-1}=\frac{Q(p_0,p_2)Q(p_2,p_1)}{K(p_0,p_2)K(p_2,p_1)}.
		\end{equation}
		By the definition of $S$,
		\begin{equation*}
			S[p]=S(p_{0},p_1)S(p_1,p_{2})S(p_{2},p_{0})=\left( \frac{Q(p_1,p_0)}{K(p_1,p_0)} \right)^{-1}\cdot \left( \frac{Q(p_2,p_1)}{K(p_2,p_1)} \right)^{-1}\cdot \frac{Q(p_2,p_0)}{K(p_2,p_0)}=1,
		\end{equation*}
		where the last equality follows from \eqref{eq_nice_help2} in conjunction with Lemma~\ref{lemma_det_equivalence_w_easy_consequence} (ii).
		
		\item \textit{$p=(p_i)_{i=0}^3$ is a $3$-cycle in $\Lambda$ belonging only to Case 1 such that $K[p]=0$, with $K=0$ on all three of its edges}.
		
		In this case, by the very definition of $S$,
		\begin{align*}
			S[p]=S(p_{0},p_1)S(p_1,p_{2})S(p_{2},p_{0})&=\left( \frac{Q(p_1,p_0)}{K(p_1,p_0)} \right)^{-1}\cdot \left( \frac{Q(p_2,p_1)}{K(p_2,p_1)} \right)^{-1}\cdot \left( \frac{Q(p_0,p_2)}{K(p_0,p_2)} \right)^{-1}\\
			&=\left( \frac{Q'[p]}{K'[p]} \right)^{-1}\\
			&=1,
		\end{align*}
		where the last equality follows from the fact that $p$ belongs to Case 1.
	\end{enumerate}
	Since we have proven that $S[p]=1$ for every possible type of $3$-cycle $p$ in $\Lambda$, it follows that $S$ is a (full) cocycle function. \qed
\end{proof}

With this proposition at hand, to prove Theorem~\ref{main_thm} it remains to show that all $3$-cycles in $\Lambda$ belong to Case 1, or all to Case 2. As was remarked at the end of Section~\ref{sec_strategy}, instead of working with the entire set $\Lambda$, it suffices to prove this for subsets $\mathcal{M}\subseteq \Lambda$ such that $|\mathcal{M}|=4$. More specifically, it suffices to prove the following proposition.

\begin{proposition}
	\label{prop_step_2_lambda_4_elements}
	Let $\Lambda$ be a set, $\mathbb{F}$ a field, and let $K,Q:\Lambda^2\to\mathbb{F}$ be a pair of determinantally equivalent functions of class $\mathcal{D}$. Let $\mathcal{M}\subseteq\Lambda$ be a set such that $|\mathcal{M}|=4$. Then, either
	\begin{equation*}
		\text{every $3$-cycle in $\mathcal{M}$ belongs to Case 1 or every $3$-cycle in $\mathcal{M}$ belongs to Case 2.}
	\end{equation*}
\end{proposition}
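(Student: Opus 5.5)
We argue by contradiction, working with $\mathcal{M}=\{1,2,3,4\}$ and the labellings \eqref{four_cycle_labellings}, \eqref{three_cycle_labellings}. Suppose the four $3$-cycles $p^{(1)},\dots,p^{(4)}$ do not all lie in a common Case. A $3$-cycle lying in both Cases is compatible with either, so there must then be one cycle lying \emph{only} in Case 1 and another lying \emph{only} in Case 2. Any two distinct $3$-cycles in $\mathcal{M}$ share exactly two vertices. After relabelling we may therefore take $p=p^{(1)}$ exclusively in Case 1 and $q=p^{(2)}$ exclusively in Case 2, with common vertex set $\mathcal{I}=\{1,2\}$.

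By Corollary~\ref{cor_impossible_zeroedge_shared_w_Case1_Case2}(i), $K(1,2)\neq0$ and $K(2,1)\neq0$, and then Lemma~\ref{lemma_det_equivalence_w_easy_consequence}(ii) gives $Q(1,2)Q(2,1)\neq0$. We next split according to whether $K[p^{(1)}]$, $K'[p^{(1)}]$, $K[p^{(2)}]$, $K'[p^{(2)}]$ vanish.

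\textbf{All four nonzero.} If moreover every $K$-value on $\mathcal{M}$ off the diagonal is nonzero, we are in the setting of \cite{mantelos2026determinantally}. In any case, if all six pairs $\{x,y\}\subset\mathcal{M}$ have $K(x,y)K(y,x)\neq0$, the $4\times4$ determinant identity, expanded via Leibniz into $4$-cycles and $3$-cycles, reproduces the contradiction of Lemma~6.6 and its sequel in that paper. That argument only needs nonvanishing on $\mathcal{M}$, so I would quote it.

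\textbf{Some off-diagonal $K$-value on $\mathcal{M}$ is zero.} By the step above this zero must lie on an edge not in $\{1,2\}$. Lemma~\ref{prop_one_zero_implies_many_nonzeros} then forces many neighbouring values to be nonzero: a zero at $(x,y)$ makes every $K(x,z)$ and $K(z,y)$ nonzero. Lemmas~\ref{lemma_3cycle_only_one_Case} and \ref{lemma_3cycle_both_Cases} further restrict each edge to one of the six types of Proposition~\ref{prop_guarantee_nonexistence_of_problematic_pairs}, where types (iii)/(iv) belong to Case 1 and (v)/(vi) to Case 2. An edge of type (iii)/(iv) shared by $p^{(1)}$ and $p^{(3)}$ or $p^{(4)}$ forces the other cycle not to be exclusively Case 2, by the argument of Corollary~\ref{cor_impossible_zeroedge_shared_w_Case1_Case2}(i). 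This propagates Case constraints to $p^{(3)}$ and $p^{(4)}$.

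The core step is to write out the $n=4$ determinant identity on $\mathcal{M}$, after cancelling the $1$-, $2$- and $3$-cycle contributions using Lemma~\ref{lemma_det_equivalence_w_easy_consequence} and the Case membership of each $3$-cycle. This leaves
\[K[q^{[1]}]+K'[q^{[1]}]+K[q^{[2]}]+K'[q^{[2]}]+K[q^{[3]}]+K'[q^{[3]}]=\text{same for }Q.\]
In each zero-pattern, most of these $4$-cycle terms vanish. The survivors can be expressed through $3$-cycles using Lemma~\ref{graph_lemma_1}, or Lemma~5.2 of \cite{mantelos2026determinantally} when the chords are nonzero. When both chords of a $4$-cycle vanish, no such decomposition is available inside $\mathcal{M}$. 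If $|\Lambda|\geq5$, I would pass through a fifth point using Lemma~\ref{graph_lemma_2}, since Lemma~\ref{prop_one_zero_implies_many_nonzeros} guarantees the needed nonzeros.

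Substituting $K[p^{(1)}]=Q[p^{(1)}]$ and $K[p^{(2)}]=Q'[p^{(2)}]$ with $K[p^{(i)}]\neq K'[p^{(i)}]$ should leave an identity that factors as a product of two differences. One factor should be of the form $K[p^{(1)}]-K'[p^{(1)}]$ or $K[p^{(2)}]-K'[p^{(2)}]$, which is nonzero by exclusivity. The other factor should be a $2\times2$ minor of $K$ or $Q$ on four distinct points, which is nonzero by property $\mathcal{D}$. This gives the contradiction, and it is where the class $\mathcal{D}$ hypothesis is genuinely used. I would separate this into two lemmas, according to whether $K$ vanishes somewhere on $\mathcal{M}$.

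The main obstacle is this final factorisation in the zero-containing subcases. We must show that the surviving $4$-cycle terms assemble into exactly such a product, case by case over the admissible zero patterns, and that no pattern escapes both factors. I would try first the configuration where a single edge opposite to $\{1,2\}$, say $(3,4)$, has $K(3,4)=0\neq K(4,3)$, since the other patterns should reduce to it by symmetry.
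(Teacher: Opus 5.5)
\textbf{Verdict: there is a genuine gap.} Several parts of your setup are fine: the reduction to ``some $3$-cycle lies only in Case~1 and another only in Case~2'', the use of Corollary~\ref{cor_impossible_zeroedge_shared_w_Case1_Case2}(i) on $\{1,2\}$, and the appeal to \cite{mantelos2026determinantally} when $K$ has no off-diagonal zeros on $\mathcal{M}$. But the zero-containing case, which is the whole substance of the proposition, is not proved. You only describe what the $4$-cycle identity ``should'' factor into, and you flag this yourself as the main obstacle.

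Worse, the plan lacks a necessary ingredient. You fix $p^{(1)}$ (only Case~1) and $p^{(2)}$ (only Case~2), but you never determine the Case of $p^{(3)}$ and $p^{(4)}$.
\begin{itemize}
\item Transferring a surviving $4$-cycle term from $K$ to $Q$ by a chord decomposition (Lemma~5.2 of \cite{mantelos2026determinantally} or Lemma~\ref{graph_lemma_1}) requires knowing the Case of both $3$-cycles produced.
\item Every such decomposition involves $p^{(3)}$ or $p^{(4)}$, except $q^{[3]}$ split along $\{1,2\}$. That one gives $K[q^{[3]}]=Q'[p^{(1)}]Q'[p^{(2)}]/(Q(1,2)Q(2,1))$, which is not a $Q$-term of the identity.
\item Your ``propagation'' remark constrains $p^{(3)},p^{(4)}$ only when a zero happens to sit on an edge they share with $p^{(1)}$ or $p^{(2)}$.
\end{itemize}
So your predicted uniform outcome (an exclusivity factor for $p^{(1)}$ or $p^{(2)}$ times a class-$\mathcal{D}$ minor) is a guess, not a derivation.

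\textbf{What the paper does instead.} It supplies the missing organisation.
\begin{itemize}
\item By pigeonhole, two cycles share a Case; without loss of generality $p^{(2)},p^{(4)}$ are in Case~1.
\item If a third cycle joins them, Lemma~\ref{prop_step_2.1_lambda_4_elements} shows the fourth does too. It uses Lemma~6.6 of \cite{mantelos2026determinantally} and chord decompositions, not a contradiction via a minor. In the subcase $K=Q=0$ on $\{1,4\}$ in both directions, one branch even ends with all four cycles in Case~2.
\item Otherwise (Lemma~\ref{prop_step_2.1_lambda_4_elements2}) one has the exclusive split $\{p^{(2)},p^{(4)}\}$ versus $\{p^{(1)},p^{(3)}\}$. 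Corollary~\ref{cor_impossible_zeroedge_shared_w_Case1_Case2}(i), applied to all four opposite-Case pairs, forces $K\neq0$ on $\{1,2\},\{1,4\},\{2,3\},\{3,4\}$. Part (ii) allows at most one zero direction on each of $\{1,3\}$ and $\{2,4\}$, leaving seven patterns that reduce to two.
\item In the basic pattern ($K(2,4)=0$ only), the pairs $(K'[q^{[2]}],K'[q^{[3]}])$ and $(Q'[q^{[2]}],Q'[q^{[3]}])$ have equal sums and equal products. One resulting branch gives $Q[p^{(3)}]=Q'[p^{(3)}]$, contradicting exclusivity of a \emph{third} cycle. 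The other gives the vanishing of the minor of $Q$ with rows $\{2,4\}$ and columns $\{1,3\}$, contradicting class $\mathcal{D}$.
\item The second pattern is reduced to that second branch via Lemma~\ref{graph_lemma_1}.
\end{itemize}

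\textbf{The fifth-point fallback.} Your use of a fifth point via Lemma~\ref{graph_lemma_2} is unavailable when $|\Lambda|=4$. It is also unnecessary. If $K$ vanished in both directions on both chords of a $4$-cycle, every $3$-cycle in $\mathcal{M}$ would contain a doubly-zero pair, so all four would lie in both Cases. Chords vanishing in only one direction are handled inside $\mathcal{M}$ by Lemma~\ref{graph_lemma_1} in the configurations that actually arise.
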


Since the above proposition concerns a relationship between a pair of determinantally equivalent functions $K$ and $Q$ restricted on sets of cardinality four, in order to simplify notations we will assume throughout this section, without loss of generality, that $K$ and $Q$ have domain $\mathcal{M}^2$, where $\mathcal{M}\coloneqq\{1,2,3,4\}$; and we will be consistent with the labelling of the $4$-cycles and $3$-cycles in $\mathcal{M}$ given in \eqref{four_cycle_labellings} and \eqref{three_cycle_labellings}, respectively.

The following two lemmas are enough for establishing Proposition~\ref{prop_step_2_lambda_4_elements}.
\begin{lemma}
	\label{prop_step_2.1_lambda_4_elements}
	Let $\mathbb{F}$ be a field, and suppose that the pair of functions $K:\mathcal{M}^2\to\mathbb{F}$ and $Q:\mathcal{M}^2\to\mathbb{F}$ are determinantally equivalent and of class $\mathcal{D}$. If any three of the four $3$-cycles $p^{(1)}$, $p^{(2)}$, $p^{(3)}$, $p^{(4)}$ belong to the same Case, then all four belong to that Case.
\end{lemma}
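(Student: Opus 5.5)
We reduce to the Case 1 statement and then argue by contradiction through the order-four determinant identity.

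\emph{Reduction to Case 1.} Define $Q^{T}(x,y)\coloneqq Q(y,x)$. Then $Q^{T}$ is determinantally equivalent to $Q$, hence to $K$. It is also of class $\mathcal{D}$, because the $2\times 2$ determinant in \eqref{property_D} for $Q^{T}$ is the determinant of a transposed $2\times2$ minor of $Q$ taken at the same four distinct points. A $3$-cycle lies in Case 2 for the pair $(K,Q)$ exactly when it lies in Case 1 for the pair $(K,Q^{T})$. So the Case 2 statement follows from the Case 1 statement. After relabelling $\mathcal{M}$, it suffices to show: if $p^{(1)},p^{(2)},p^{(3)}$ belong to Case 1, then so does $p^{(4)}=(2,3,4,2)$.

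\emph{The order-four identity.} Suppose, for contradiction, that $p^{(4)}$ belongs only to Case 2, so $K[p^{(4)}]\neq K'[p^{(4)}]$ by Lemma~\ref{lemma_3cycle_in_both_cases_condition}. Expand \eqref{equal_determinants_relation} with $n=4$ on $\mathcal{M}$ by the Leibniz formula and group permutations by cycle type.
\begin{itemize}
\item The identity permutation, transpositions and double transpositions give equal contributions for $K$ and $Q$, by Lemma~\ref{lemma_det_equivalence_w_easy_consequence}.
\item Each $3$-cycle term, paired with its reverse, contributes $(K[p]+K'[p])$ times a diagonal entry. These agree for $K$ and $Q$ by \eqref{key_trick_mythm_eq1}.
\end{itemize}
What remains is
\[
\sum_{k=1}^{3}\bigl(K[q^{[k]}]+K'[q^{[k]}]\bigr)=\sum_{k=1}^{3}\bigl(Q[q^{[k]}]+Q'[q^{[k]}]\bigr).
\]

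\emph{Rewriting the $4$-cycles.} Each $q^{[k]}$ uses two edges at the vertex $1$ and two edges of $p^{(4)}$. Wherever the relevant $2$-cycle products are nonzero, I rewrite $K[q^{[k]}]$ using Lemma~5.2 of \cite{mantelos2026determinantally}. It becomes a product of a $3$-cycle through $1$ (one of $p^{(1)},p^{(2)},p^{(3)}$ or their reverses) and a $3$-cycle value of $p^{(4)}$ or its reverse, divided by $2$-cycle products. The same rewriting applies to $Q$.

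The Case 1 hypothesis lets me replace $K$-values of $p^{(1)},p^{(2)},p^{(3)}$ by $Q$-values, and Lemma~\ref{lemma_det_equivalence_w_easy_consequence}(ii) lets me replace the $2$-cycle products. The Case 2 behaviour of $p^{(4)}$ swaps $K[p^{(4)}]$ with $Q'[p^{(4)}]$. After these substitutions the identity should take the form
\[
c\,\bigl(K[p^{(4)}]-K'[p^{(4)}]\bigr)=0,
\]
with $c$ a combination of $3$-cycle and $2$-cycle values. The final step is to show $c\neq0$ using property $\mathcal{D}$; this is where the $2\times2$ determinants from $\mathcal{D}$ should enter. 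That yields $K[p^{(4)}]=K'[p^{(4)}]$, a contradiction.

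\emph{Main obstacle: zeros.} The divisions above are only legitimate when certain values are nonzero, so I split by zero pattern.
\begin{itemize}
\item Corollary~\ref{cor_impossible_zeroedge_shared_w_Case1_Case2}(i) shows that if $p^{(j)}$, $j\le 3$, belongs exclusively to Case 1, then $K$ does not vanish on the edges it shares with $p^{(4)}$.
\item If some $p^{(j)}$ belongs to both Cases, Lemma~\ref{lemma_3cycle_both_Cases} says it has a unique doubly-zero edge. Combined with Lemma~\ref{prop_one_zero_implies_many_nonzeros}, this forces most other entries to be nonzero.
\item When a diagonal pair such as $(1,3)$ or $(2,4)$ carries a one-directional zero, I replace Lemma~5.2 by Lemma~\ref{graph_lemma_1} to decompose the affected $q^{[k]}$.
\item Doubly-zero pairs kill both $q^{[k]}$ and $q'^{[k]}$, which removes those terms from both sides of the identity.
\end{itemize}

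In each configuration the goal is the same reduced identity with a coefficient that $\mathcal{D}$ forces to be nonzero. I expect this exhaustive zero-pattern bookkeeping, rather than the algebra, to be the hardest part.
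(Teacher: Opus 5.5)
Your reduction to Case 1 via $Q^{T}$ and your derivation of the reduced identity $\sum_{k}(K[q^{[k]}]+K'[q^{[k]}])=\sum_{k}(Q[q^{[k]}]+Q'[q^{[k]}])$ are fine. The decisive step fails, however: $c\neq 0$ cannot be forced from property $\mathcal{D}$. In fact the statement you reduce to is false.

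To see where it breaks, take $K(1,4)=K(4,1)=0$. Then:
\begin{itemize}
\item $p^{(2)}$ and $p^{(3)}$ vanish in both directions, so they lie in both Cases automatically.
\item $q^{[1]}$, $q^{[3]}$ and their reverses vanish for $K$ and for $Q$.
\item The surviving term can be split along the chord $\{2,3\}$ (here $K(2,3)K(3,2)\neq 0$) as $K[q^{[2]}]=K[p^{(1)}]K'[p^{(4)}]/(K(2,3)K(3,2))$, and similarly for $K'[q^{[2]}]$, $Q[q^{[2]}]$, $Q'[q^{[2]}]$.
\end{itemize}
Under ``$p^{(1)}$ in Case 1, $p^{(4)}$ in Case 2'', your identity then becomes exactly
\[
\bigl(K[p^{(1)}]-K'[p^{(1)}]\bigr)\bigl(K[p^{(4)}]-K'[p^{(4)}]\bigr)=0 .
\]
So $c=K[p^{(1)}]-K'[p^{(1)}]$. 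It vanishes precisely when $p^{(1)}$ lies in both Cases, and nothing in $\mathcal{D}$ excludes that.

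Here is a concrete example over $\mathbb{Q}$. Let $K(x,y)=1$ for $x\neq y$, except $K(1,4)=K(4,1)=0$, $K(2,4)=2$, $K(4,3)=3$; the diagonal is arbitrary. Let $Q(x,y)=K(y,x)$.
\begin{itemize}
\item The six $2\times 2$ determinants in \eqref{property_D} (one for each split of $\mathcal{M}$ into two rows and the complementary two columns) are $2,-1,2,1,1,3$ up to sign. So $K$ and $Q$ are of class $\mathcal{D}$, and they are determinantally equivalent.
\item $p^{(1)},p^{(2)},p^{(3)}$ all belong to Case 1 (and also to Case 2).
\item Yet $K[p^{(4)}]=1\neq 6=Q[p^{(4)}]$, so $p^{(4)}$ is not in Case 1.
\end{itemize}

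The missing idea is that in this configuration you cannot conclude Case 1 for the odd cycle. You must conclude instead that all four cycles lie in Case 2: when $c=0$, $p^{(1)}$ is in both Cases, as are $p^{(2)}$ and $p^{(3)}$. This is exactly what the paper does in its Instance (I). In its labelling the common vertex is $4$, the doubly-zero pair is $\{1,4\}$, and the odd cycle is $p^{(1)}$. From the surviving $q^{[2]}$-identity it obtains $K[q^{[2]}]=Q[q^{[2]}]$ or $K[q^{[2]}]=Q'[q^{[2]}]$, and in the second alternative it shows that all four cycles belong to Case 2.

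So the paper's proof really establishes only that the four cycles lie in a \emph{common} Case. That is all Proposition~\ref{prop_step_2_lambda_4_elements} uses. The wording ``that Case'' in the lemma is stronger than what is true, as the example above shows.

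Outside this degenerate subcase, the paper argues directly rather than by contradiction:
\begin{itemize}
\item A zero on an edge of the odd cycle transfers to $Q$ through the Case-1 cycle sharing that edge.
\item The all-nonzero case is Lemma 6.4 of \cite{mantelos2026determinantally}.
\item When the zero chord lies in a cycle that is only in Case 1, Lemmas 5.2 and 6.6 of \cite{mantelos2026determinantally} give Case 1 for the odd cycle directly.
\end{itemize}
Your plan would need the same subdivision, with the doubly-zero subcase ending in ``all four in Case 2'' rather than in a contradiction.
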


\begin{lemma}
	\label{prop_step_2.1_lambda_4_elements2}
	Let $\mathbb{F}$ be a field, and suppose that the pair of functions $K:\mathcal{M}^2\to\mathbb{F}$ and $Q:\mathcal{M}^2\to\mathbb{F}$ are determinantally equivalent and of class $\mathcal{D}$. If any two of the four $3$-cycles $p^{(1)}$, $p^{(2)}$, $p^{(3)}$, $p^{(4)}$ belong to the same Case, then all four belong to that Case.
\end{lemma}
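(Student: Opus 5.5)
We plan to deduce Lemma~\ref{prop_step_2.1_lambda_4_elements2} from Lemma~\ref{prop_step_2.1_lambda_4_elements} by showing that if two of the four $3$-cycles in $\mathcal{M}$ belong to the same Case, then a third one must as well. By symmetry of the labelling in \eqref{three_cycle_labellings} (any two $3$-cycles in $\mathcal{M}$ share exactly two vertices), we may relabel so that the two given cycles are $p^{(1)}=(1,2,3,1)$ and $p^{(2)}=(1,2,4,1)$, sharing the edge $\{1,2\}$, and assume without loss of generality that both belong to Case 1. The case where both belong to Case 2 is handled the same way, with $K'$ in place of $K$, or equivalently by replacing $Q$ with its transpose. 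Suppose for contradiction that neither $p^{(3)}$ nor $p^{(4)}$ belongs to Case 1. Then both belong exclusively to Case 2, and the configuration is two cycles exclusively in Case 2 against two cycles in Case 1.

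\textbf{Step 1: the nonvanishing regime.} First we would treat the case where $K\neq0$ on all off-diagonal pairs of $\mathcal{M}$. Here the $4\times4$ determinant identity, expanded via Leibniz over $S_4$, gives
\[
\sum_{k=1}^3\bigl(K[q^{[k]}]+K'[q^{[k]}]\bigr)=\sum_{k=1}^3\bigl(Q[q^{[k]}]+Q'[q^{[k]}]\bigr)
\]
after the transposition and $3$-cycle terms cancel using Lemma~\ref{lemma_det_equivalence_w_easy_consequence} and \eqref{key_trick_mythm_eq1}. We would then express each $K[q^{[k]}]$ through $3$-cycles via Lemma~5.2 of \cite{mantelos2026determinantally}, mixing Case 1 and Case 2 substitutions. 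This should reduce to a product relation of the form
\[
\bigl(K[p^{(a)}]-K'[p^{(a)}]\bigr)\bigl(K[p^{(b)}]-K'[p^{(b)}]\bigr)\cdot(\text{nonzero})=0,
\]
which is exactly the mechanism of \cite{mantelos2026determinantally}. It forces one of the cycles into both Cases and hence yields the contradiction. Property $\mathcal{D}$ should enter at the end to guarantee that the leftover factor is a nonzero $2\times2$ determinant.

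\textbf{Step 2: zeros present.} Suppose some off-diagonal value of $K$ vanishes. By Corollary~\ref{cor_impossible_zeroedge_shared_w_Case1_Case2}(i), $K\neq0$ on every edge shared between an exclusively Case 1 cycle and an exclusively Case 2 cycle. If $p^{(1)}$ or $p^{(2)}$ is exclusively Case 1, this forces nonvanishing on $\{1,3\},\{2,3\},\{1,4\},\{2,4\}$ and $\{3,4\}$, depending on which pairs are shared. Lemma~\ref{prop_one_zero_implies_many_nonzeros} limits any zero to a single pair, most likely $\{1,2\}$ or $\{3,4\}$. If instead $p^{(1)}$ or $p^{(2)}$ lies in both Cases, Lemma~\ref{lemma_3cycle_both_Cases} pins down its unique zero edge, on which $K$, $K'$, $Q$ and $Q'$ all vanish. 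In each subcase we would redo the $4\times4$ expansion, replacing Lemma~5.2 of \cite{mantelos2026determinantally} with Lemma~\ref{graph_lemma_1}, which requires only one orientation nonzero on $\{1,3\}$ or $\{2,4\}$ after relabelling. Several $4$-cycle terms then vanish outright, and the identity collapses to an equation involving a $2\times2$ minor of $K$ or $Q$ on four distinct points, multiplied by a difference $K[p]-K'[p]$. Property $\mathcal{D}$ forbids that minor from vanishing, so the difference must vanish, contradicting exclusivity via Lemma~\ref{lemma_3cycle_in_both_cases_condition}.

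\textbf{Conclusion and main obstacle.} Once a third cycle is shown to belong to the same Case as the first two, Lemma~\ref{prop_step_2.1_lambda_4_elements} places all four in that Case. We expect the main obstacle to be Step 2: the bookkeeping over zero patterns, and in particular identifying, in each subcase, which $2\times2$ minor emerges from the $4\times4$ expansion. A fallback for stubborn configurations is to use a fifth point together with Lemma~\ref{graph_lemma_2}, but this is unavailable when $|\Lambda|=4$. So we would aim to make the direct expansion work in every subcase.
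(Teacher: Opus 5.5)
There is a genuine gap at the start, although your overall architecture matches the paper's: the $4\times4$ Leibniz identity, substituting $3$-cycles into $4$-cycles via Lemma~5.2 of \cite{mantelos2026determinantally} or Lemma~\ref{graph_lemma_1}, and closing with a class-$\mathcal{D}$ minor. You fix the common Case of $p^{(1)},p^{(2)}$ as Case~1 and then seek a contradiction in every subcase. That includes the Step~2 subcase where $p^{(1)}$ or $p^{(2)}$ lies in both Cases, and there no contradiction exists. Take $K(1,2)=K(2,1)=0$, all other off-diagonal values generic, and $Q(x,y)=K(y,x)$. Both functions are of class $\mathcal{D}$ (no admissible $2\times2$ minor contains both zeros) and they are determinantally equivalent. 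Since $K[p]=K'[p]=Q[p]=Q'[p]=0$ for $p=p^{(1)},p^{(2)}$, these two cycles lie in Case~1, yet $p^{(3)},p^{(4)}$ lie only in Case~2. A nowhere-zero $K$ that is symmetric except on $\{3,4\}$, again with $Q=K^{T}$, does the same to your Step~1: forcing a cycle into both Cases is no contradiction unless you already know $p^{(1)},p^{(2)}$ are exclusive. The missing step is the paper's opening move: if any $p^{(i)}$ lies in both Cases, three cycles share a Case and Lemma~\ref{prop_step_2.1_lambda_4_elements} finishes. In the example this yields all four in Case~2, not Case~1, so the literal ``that Case'' cannot be reached there. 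What is actually proved, and all that Proposition~\ref{prop_step_2_lambda_4_elements} needs, is that all four share some Case. Only after this reduction may you assume every $p^{(i)}$ is exclusive.

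Even after that reduction, your zero bookkeeping needs correcting. In your labelling, Corollary~\ref{cor_impossible_zeroedge_shared_w_Case1_Case2}(i) forces $K\neq0$ on $\{1,3\},\{1,4\},\{2,3\},\{2,4\}$, but not on $\{3,4\}$, which is shared by the two Case~2 cycles. Lemma~\ref{prop_one_zero_implies_many_nonzeros} also does not confine the zeros to one pair: by part (ii) of the corollary, $K$ may vanish in one orientation on $\{1,2\}$ and in one orientation on $\{3,4\}$ at the same time, and this double-zero pattern is the paper's hardest case. The ``minor times difference'' step is where the real work lies, and your plan names neither of the identities that make it go through. In the paper's labelling ($p^{(2)},p^{(4)}$ in Case~1), a single zero $K(2,4)=0$ kills one orientation of $q^{[2]}$ and $q^{[3]}$ for both $K$ and $Q$, and makes $q^{[1]}$ swap. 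Lemma~6.6 of \cite{mantelos2026determinantally} then leaves $K'[q^{[2]}]+K'[q^{[3]}]=Q'[q^{[2]}]+Q'[q^{[3]}]$, and the proof of Lemma~6.7 there supplies the matching product identity. Hence $K'[q^{[2]}]$ equals either $Q'[q^{[2]}]$, which gives $Q[p^{(3)}]=Q'[p^{(3)}]$, or $Q'[q^{[3]}]$, which gives $Q(2,3)Q(4,1)-Q(2,1)Q(4,3)=0$ and violates $\mathcal{D}$. With zeros at both $(3,1)$ and $(2,4)$, Lemma~\ref{graph_lemma_1} gives $K[q^{[1]}]=Q[q^{[1]}]=K'[q^{[1]}]$, hence $Q[q^{[1]}]=Q'[q^{[1]}]$ by the product identity. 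The $4\times4$ identity then collapses directly to the second branch.
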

Proposition~\ref{prop_step_2_lambda_4_elements} is obtained by combining the two lemmas.
\begin{proof}[Proof of Proposition~\ref{prop_step_2_lambda_4_elements}]
	From the pigeonhole principle, we know that there will always be two $3$-cycles $p^{(i)}$ (out of the four from (\ref{three_cycle_labellings})) belonging to the same Case. The claim of the proposition then follows immediately from Lemma~\ref{prop_step_2.1_lambda_4_elements} and Lemma~\ref{prop_step_2.1_lambda_4_elements2}. \qed
\end{proof}	
It remains to prove Lemmas~\ref{prop_step_2.1_lambda_4_elements} and \ref{prop_step_2.1_lambda_4_elements2}. Since the action of the symmetric group $S_4$ is transitive on pairs of distinct undirected $3$-cycles on four vertices, we can actually assume, without loss of generality, which specific two $3$-cycles from \eqref{three_cycle_labellings} belong to Case 1. To be consistent with \cite{mantelos2026determinantally}, let us assume henceforth that it is $p^{(2)}$ and $p^{(4)}$ that belong to Case 1.

\textbf{\underline{Running assumptions for the remainder of Section~\ref{sec_proof_of_main_result}:}} $p^{(2)}$ and $p^{(4)}$ belong to Case 1.

\begin{proof}[Proof of Lemma~\ref{prop_step_2.1_lambda_4_elements}]
	Suppose, without loss of generality, that $p^{(3)}$ also belongs to Case 1. We therefore need to show that $p^{(1)}$ belongs to the same Case as $p^{(2)}$, $p^{(3)}$ and $p^{(4)}$.
	
	Suppose first that $K=0$ on at least one of the edges of $p^{(1)}$. We note that every edge of $p^{(1)}$ is an edge of exactly one of $p^{(2)}$, $p^{(3)}$ and $p^{(4)}$. Since each of the latter three $3$-cycles belongs to Case 1, it suffices to consider the case where $K=0$ on the edge $(1,2)$, that is, $K(1,2)=0$; in which case $K[p^{(1)}]=0$. Since $(1,2)$ is also an edge of $p^{(2)}$, our assumption that it belongs to Case 1 yields $Q(1,2)=0$; thus, $$Q[p^{(1)}]=0=K[p^{(1)}].$$ Therefore, $p^{(1)}$ belongs to Case 1 just like the other three $3$-cycles. The analogous argument shows that if $K=0$ on at least one of the edges of the cycle $p^{(1)}$ in reverse, then $$K'[p^{(1)}]=0=Q'[p^{(1)}],$$ which is to say, $p^{(1)}$ belongs to Case 1, once again.
	
	Let us now suppose that $K\neq 0$ on every edge (and reverse edge) of $p^{(1)}$; the setup is thus:
	\begin{figure}[H]
		\centering
		\begin{tikzpicture}
			\Vertex[label=$2$]{A} \Vertex[x=6,label=$3$]{B}, \Vertex[x=3,y=3,label=$1$]{C} \Vertex[x=3,y=1,label=$4$]{D}
			\Edge[Direct,color=black,bend=30,label=$\neg 0$](A)(C)
			\Edge[Direct,color=blue,label=$\neg 0$](C)(A)
			\Edge[Direct,color=black,bend=30,label=$\neg 0$](C)(B)
			\Edge[Direct,color=green,label=$\neg 0$](B)(C)
			\Edge[Direct,color=black,bend=30,label=$\neg 0$](B)(A)
			\Edge[Direct,color=red,label=$\neg 0$](A)(B)
			\Edge[Direct,color=blue,bend=-20](D)(C)
			\Edge[Direct,color=green,bend=-20](C)(D)
			\Edge[Direct,color=blue,bend=-10](A)(D)
			\Edge[Direct,color=red,bend=-10,](D)(A)
			\Edge[Direct,color=red,bend=-10](B)(D)
			\Edge[Direct,color=green,bend=-10](D)(B)
		\end{tikzpicture}
		\caption{In black is the $3$-cycle $p^{(1)}$ in reverse, in red is the $3$-cycle $\color{red} p^{(4)}$, in green is the $3$-cycle $\color{green} p^{(3)}$, and in blue is the $3$-cycle $\color{blue} p^{(2)}$.}
	\end{figure}
	If none of
	\begin{equation}
		\label{one_of_quantities}
		K(1,4),\enspace K(4,1),\enspace K(2,4),\enspace K(4,2),\enspace K(3,4),\enspace K(4,3) 
	\end{equation}
	are zero, then it was proven in Lemma 6.4 of \cite{mantelos2026determinantally} that $K[p^{(1)}]=Q[p^{(1)}]$, and so we are done. So let us suppose that at least one of the quantities from \eqref{one_of_quantities} is zero. Since $p^{(2)}$, $p^{(3)}$ and $p^{(4)}$ belong to the same Case, we can suppose without loss of generality that $K(4,1)=0$ (all the other cases can be treated using the same method). Using Lemma~\ref{prop_one_zero_implies_many_nonzeros}, this yields the following setup:
	\begin{figure}[H]
		\centering
		\begin{tikzpicture}
			\Vertex[label=$2$]{A} \Vertex[x=6,label=$3$]{B}, \Vertex[x=3,y=3,label=$1$]{C} \Vertex[x=3,y=1,label=$4$]{D}
			\Edge[Direct,color=black,bend=30,label=$\neg 0$](A)(C)
			\Edge[Direct,color=blue,label=$\neg 0$](C)(A)
			\Edge[Direct,color=black,bend=30,label=$\neg 0$](C)(B)
			\Edge[Direct,color=green,label=$\neg 0$](B)(C)
			\Edge[Direct,color=black,bend=30,label=$\neg 0$](B)(A)
			\Edge[Direct,color=red,label=$\neg 0$](A)(B)
			\Edge[Direct,color=blue,bend=-20,label=0](D)(C)
			\Edge[Direct,color=green,bend=-20](C)(D)
			\Edge[Direct,color=blue,bend=-10](A)(D)
			\Edge[Direct,color=red,bend=-10,label=$\neg 0$](D)(A)
			\Edge[Direct,color=red,bend=-10](B)(D)
			\Edge[Direct,color=green,bend=-10,label=$\neg 0$](D)(B)
		\end{tikzpicture}
		\caption{In black is the $3$-cycle $p^{(1)}$ in reverse, in red is the $3$-cycle $\color{red} p^{(4)}$, in green is the $3$-cycle $\color{green} p^{(3)}$, and in blue is the $3$-cycle $\color{blue} p^{(2)}$.}
	\end{figure}
	We first note that $K(4,1)=0$ implies $K[p^{(2)}]=0$. We proceed by cases: 
	\begin{enumerate}[(I)]
		\item 	$p^{(2)}$ belongs to \textit{both} Case 1 and Case 2;
		\item $p^{(2)}$ belongs \textit{only} to Case 1.
	\end{enumerate}	
	We start by considering instance (I). Lemma~\ref{lemma_3cycle_both_Cases} then dictates that 
	\begin{equation}
		\label{eqn_all_zero_apply}
		K(1,4)=K(4,1)=Q(1,4)=Q(4,1)=0\text{ and } K(2,4)\neq 0,\enspace Q(2,4)\neq 0.
	\end{equation}
	%Using Proposition~\ref{prop_one_zero_implies_many_nonzeros} we thus have the following setup:
	\begin{comment}
		\begin{figure}[H]
			\centering
			\begin{tikzpicture}
				\Vertex[label=$2$]{A} \Vertex[x=6,label=$3$]{B}, \Vertex[x=3,y=3,label=$1$]{C} \Vertex[x=3,y=1,label=$4$]{D}
				\Edge[Direct,color=black,bend=30,label=$\neg 0$](A)(C)
				\Edge[Direct,color=blue,label=$\neg 0$](C)(A)
				\Edge[Direct,color=black,bend=30,label=$\neg 0$](C)(B)
				\Edge[Direct,color=green,label=$\neg 0$](B)(C)
				\Edge[Direct,color=black,bend=30,label=$\neg 0$](B)(A)
				\Edge[Direct,color=red,label=$\neg 0$](A)(B)
				\Edge[Direct,color=blue,bend=-20,label=0](D)(C)
				\Edge[Direct,color=green,bend=-20,label=0](C)(D)
				\Edge[Direct,color=blue,bend=-10,label=$\neg 0$](A)(D)
				\Edge[Direct,color=red,bend=-10,label=$\neg 0$](D)(A)
				\Edge[Direct,color=red,bend=-10,label=$\neg 0$](B)(D)
				\Edge[Direct,color=green,bend=-10,label=$\neg 0$](D)(B)
			\end{tikzpicture}
			\caption{In black is the $3$-cycle $p^{(1)}$ in reverse, in red is the $3$-cycle $\color{red} p^{(4)}$, in green is the $3$-cycle $\color{green} p^{(3)}$, and in blue is the $3$-cycle $\color{blue} p^{(2)}$.}
		\end{figure}
	\end{comment}
	This, in turn, yields
	\begin{equation*}
		K[q^{[1]}]=K'[q^{[1]}]=0=Q[q^{[1]}]=Q'[q^{[1]}]\text{ and }K[q^{[3]}]=K'[q^{[3]}]=0=Q[q^{[3]}]=Q'[q^{[3]}].
	\end{equation*}
	By Lemma 6.6 from \cite{mantelos2026determinantally}, this implies
	\begin{equation*}
		K[q^{[2]}]+K'[q^{[2]}]=Q[q^{[2]}]+Q'[q^{[2]}].
	\end{equation*}
	By the second part of Lemma~\ref{lemma_det_equivalence_w_easy_consequence}, we also have
	\begin{equation*}
		K[q^{[2]}]\cdot K'[q^{[2]}]=Q[q^{[2]}]\cdot Q'[q^{[2]}].
	\end{equation*}
	Consequently, either
	\begin{equation}
		\label{eq_choice_1}
		K[q^{[2]}]=Q[q^{[2]}]
	\end{equation}
	or
	\begin{equation}
		\label{eq_choice_2}
		K[q^{[2]}]=Q'[q^{[2]}].
	\end{equation}
	By Lemma~\ref{prop_one_zero_implies_many_nonzeros}, \eqref{eqn_all_zero_apply} also implies $K(3,4)\neq 0$. Lemma 5.2 from \cite{mantelos2026determinantally} (with a simple relabelling) is therefore applicable in this particular setup and yields
	\begin{equation*}
		K[q^{[2]}]=\frac{K[p^{(1)}]K'[p^{(4)}]}{K(2,3)K(3,2)},\enspace Q[q^{[2]}]=\frac{Q[p^{(1)}]Q'[p^{(4)}]}{Q(2,3)Q(3,2)}\text{ and }Q'[q^{[2]}]=\frac{Q'[p^{(1)}]Q[p^{(4)}]}{Q(2,3)Q(3,2)}.
	\end{equation*}
	So, if \eqref{eq_choice_1} is true, then
	\begin{equation*}
		\frac{K[p^{(1)}]K'[p^{(4)}]}{K(2,3)K(3,2)}=\frac{Q[p^{(1)}]Q'[p^{(4)}]}{Q(2,3)Q(3,2)}.
	\end{equation*}
	By applying the second part of Lemma~\ref{lemma_det_equivalence_w_easy_consequence} and using the fact that $p^{(4)}$ belongs to Case 1, this yields $$K[p^{(1)}]=Q[p^{(1)}];$$ which is to say, $p^{(1)}$ belongs to Case 1, as required.
	
	If, on the other hand, \eqref{eq_choice_2} is true, then we have
	\begin{equation*}
		\frac{K[p^{(1)}]K'[p^{(4)}]}{K(2,3)K(3,2)}=\frac{Q'[p^{(1)}]Q[p^{(4)}]}{Q(2,3)Q(3,2)}.
	\end{equation*}
	Thus, by the second part of Lemma~\ref{lemma_det_equivalence_w_easy_consequence},
	\begin{equation*}
		K[p^{(1)}]K'[p^{(4)}]=Q'[p^{(1)}]Q[p^{(4)}].
	\end{equation*}
	And so if $p^{(1)}$ did not belong to Case 1 (which necessarily means that $p^{(1)}$ belongs to Case 2), then the first term from both the left-hand-side and right-hand-side cancels, and we get $$K'[p^{(4)}]=Q[p^{(4)}];$$ which is to say, $p^{(4)}$ belongs to Case 2.
	
	Notice how $p^{(3)}$ also belongs to Case 2. Indeed, from Figure 4 and \eqref{eqn_all_zero_apply} we see that $K[p^{(3)}]=0$ and $K\neq 0$ on two of its edges, namely $(3,1)$ and $(4,3)$, and its other edge $(1,4)$ satisfies \eqref{eqn_all_zero_apply}. This setup, by Lemma~\ref{lemma_3cycle_both_Cases}, indicates that $p^{(3)}$ belongs to \textit{both} Case 1 and Case 2.
	
	Since all four $3$-cycles, $p^{(1)}$, $p^{(2)}$, $p^{(3)}$, $p^{(4)}$,  belong to the same Case, namely Case 2, we are done.
	
	Finally, we consider instance (II). Lemma~\ref{lemma_3cycle_only_one_Case} (i) then implies $K(1,4)\neq 0$. We proceed by cases: $$K(2,4)=0\text{ or } K(2,4)\neq 0.$$
	Let us first assume $K(2,4)=0$. Lemma~\ref{prop_one_zero_implies_many_nonzeros} then brings about the following setup:
	\begin{figure}[H]
		\centering
		\begin{tikzpicture}
			\Vertex[label=$2$]{A} \Vertex[x=6,label=$3$]{B}, \Vertex[x=3,y=3,label=$1$]{C} \Vertex[x=3,y=1,label=$4$]{D}
			\Edge[Direct,color=black,bend=30,label=$\neg 0$](A)(C)
			\Edge[Direct,color=blue,label=$\neg 0$](C)(A)
			\Edge[Direct,color=black,bend=30,label=$\neg 0$](C)(B)
			\Edge[Direct,color=green,label=$\neg 0$](B)(C)
			\Edge[Direct,color=black,bend=30,label=$\neg 0$](B)(A)
			\Edge[Direct,color=red,label=$\neg 0$](A)(B)
			\Edge[Direct,color=blue,bend=-20,label=0](D)(C)
			\Edge[Direct,color=green,bend=-20,label=$\neg 0$](C)(D)
			\Edge[Direct,color=blue,bend=-10,label=0](A)(D)
			\Edge[Direct,color=red,bend=-10,label=$\neg 0$](D)(A)
			\Edge[Direct,color=red,bend=-10,label=$\neg 0$](B)(D)
			\Edge[Direct,color=green,bend=-10,label=$\neg 0$](D)(B)
		\end{tikzpicture}
		\caption{In black is the $3$-cycle $p^{(1)}$ in reverse, in red is the $3$-cycle $\color{red} p^{(4)}$, in green is the $3$-cycle $\color{green} p^{(3)}$, and in blue is the $3$-cycle $\color{blue} p^{(2)}$.}
	\end{figure}
	Since $(4,1)$ and $(2,4)$ are edges of $p^{(2)}$ -- a $3$-cycle assumed to belong to Case 1 -- such that $K(4,1)=0=K(2,4)$, it follows that $Q(4,1)=0=Q(2,4)$. Therefore,
	\begin{equation*}
		\label{eqn_want_use_1}
		K[q^{[1]}]=K[q^{[2]}]=K[q^{[3]}]=0=Q[q^{[3]}]=Q[q^{[2]}]=Q[q^{[1]}].
	\end{equation*}
	Moreover, by Lemma 5.2 from \cite{mantelos2026determinantally} (with a simple relabelling of vertices),
	\begin{equation*}
		K'[q^{[3]}]=\frac{K[p^{(4)}]K[p^{(3)}]}{K(3,4)K(4,3)}=\frac{Q[p^{(4)}]Q[p^{(3)}]}{Q(3,4)Q(4,3)}=Q'[q^{[3]}],
	\end{equation*}
	where we have made use of the fact that $p^{(3)}$ and $p^{(4)}$ belong to Case 1 in conjunction with the second part of Lemma~\ref{lemma_det_equivalence_w_easy_consequence}. 		
	
	Thus, by Lemma~6.6 from \cite{mantelos2026determinantally},
	\begin{equation}
		\label{eqn_here_use}
		K'[q^{[1]}]+K'[q^{[2]}]=Q'[q^{[1]}]+Q'[q^{[2]}].
	\end{equation}
	Again by Lemma~5.2 from \cite{mantelos2026determinantally} (with a simple relabelling of vertices),
	\begin{equation*}
		Q'[q^{[1]}]=\frac{Q'[p^{(1)}]Q[p^{(3)}]}{Q(1,3)Q(3,1)}\text{ and }K'[q^{[1]}]=\frac{K'[p^{(1)}]K[p^{(3)}]}{K(1,3)K(3,1)}=K'[p^{(1)}]\cdot \frac{Q[p^{(3)}]}{Q(1,3)Q(3,1)}
	\end{equation*}
	and
	\begin{equation*}
		Q'[q^{[2]}]=\frac{Q'[p^{(1)}]Q[p^{(4)}]}{Q(2,3)Q(3,2)}\text{ and }K'[q^{[2]}]=\frac{K'[p^{(1)}]K[p^{(4)}]}{K(2,3)K(3,2)}=K'[p^{(1)}]\cdot \frac{Q[p^{(4)}]}{Q(2,3)Q(3,2)},
	\end{equation*}
	where we have made use of the fact that the $3$-cycles $p^{(3)}$ and $p^{(4)}$ belong to Case 1. Thus, by \eqref{eqn_here_use},
	\begin{equation*}
		K'[p^{(1)}]\left(  \frac{Q[p^{(3)}]}{Q(1,3)Q(3,1)} +  \frac{Q[p^{(4)}]}{Q(2,3)Q(3,2)}\right) = Q'[p^{(1)}]\left(  \frac{Q[p^{(3)}]}{Q(1,3)Q(3,1)} +  \frac{Q[p^{(4)}]}{Q(2,3)Q(3,2)}\right).
	\end{equation*}
	It follows that $K'[p^{(1)}]=Q'[p^{(1)}]$, that is, $p^{(1)}$ belongs to Case 1, as required.
	
	To complete the proof, it remains is to explore the case where $K(2,4)\neq 0$. We thus have the following setup:
	\begin{figure}[H]
		\centering
		\begin{tikzpicture}
			\Vertex[label=$2$]{A} \Vertex[x=6,label=$3$]{B}, \Vertex[x=3,y=3,label=$1$]{C} \Vertex[x=3,y=1,label=$4$]{D}
			\Edge[Direct,color=black,bend=30,label=$\neg 0$](A)(C)
			\Edge[Direct,color=blue,label=$\neg 0$](C)(A)
			\Edge[Direct,color=black,bend=30,label=$\neg 0$](C)(B)
			\Edge[Direct,color=green,label=$\neg 0$](B)(C)
			\Edge[Direct,color=black,bend=30,label=$\neg 0$](B)(A)
			\Edge[Direct,color=red,label=$\neg 0$](A)(B)
			\Edge[Direct,color=blue,bend=-20,label=0](D)(C)
			\Edge[Direct,color=green,bend=-20,label=$\neg 0$](C)(D)
			\Edge[Direct,color=blue,bend=-10,label=$\neg 0$](A)(D)
			\Edge[Direct,color=red,bend=-10,label=$\neg 0$](D)(A)
			\Edge[Direct,color=red,bend=-10](B)(D)
			\Edge[Direct,color=green,bend=-10,label=$\neg 0$](D)(B)
		\end{tikzpicture}
		\caption{In black is the $3$-cycle $p^{(1)}$ in reverse, in red is the $3$-cycle $\color{red} p^{(4)}$, in green is the $3$-cycle $\color{green} p^{(3)}$, and in blue is the $3$-cycle $\color{blue} p^{(2)}$.}
	\end{figure}
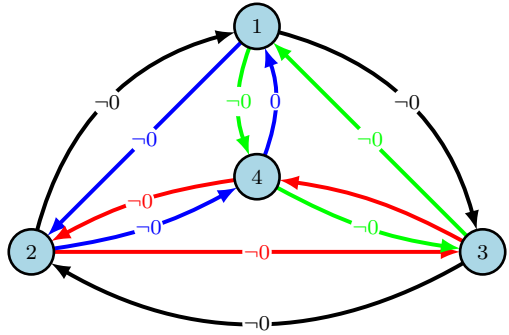
	Since $(2,4)$ is an edge of $p^{(2)}$ -- a $3$-cycle assumed to belong \textit{only} to Case 1 -- the fact that $K[p^{(2)}]=0$ and $K(2,4)\neq 0$ implies, by Lemma~\ref{lemma_3cycle_only_one_Case} (i), that $Q(2,4)\neq 0$ and $Q(4,2)\neq 0$ also. Utilizing Lemma~5.2 from \cite{mantelos2026determinantally} yet again (with a simple relabelling of the vertices) we obtain
	\begin{equation*}
		K'[q^{[1]}]=\frac{K'[p^{(2)}]K'[p^{(4)}]}{K(2,4)K(4,2)}=\frac{Q'[p^{(2)}]Q'[p^{(4)}]}{Q(2,4)Q(4,2)}=Q'[q^{[1]}]=Q'[p^{(1)}]\cdot\frac{Q[p^{(3)}]}{Q(1,3)Q(3,1)},
	\end{equation*}
	where the second equality follows from the fact that $p^{(2)}$ and $p^{(4)}$ belong to Case 1. But Lemma~5.2 from \cite{mantelos2026determinantally} also yields
	\begin{equation*}
		K'[q^{[1]}]=\frac{K'[p^{(1)}]K[p^{(3)}]}{K(1,3)K(3,1)}=K'[p^{(1)}]\cdot \frac{Q[p^{(3)}]}{Q(1,3)Q(3,1)},
	\end{equation*}
	where the second equality follows from the fact that $p^{(3)}$ belongs to Case 1. It follows that $K'[p^{(1)}]=Q'[p^{(1)}]$, that is, $p^{(1)}$ belongs to Case 1, as required. \qed

\end{proof}

\begin{proof}[Proof of Lemma~\ref{prop_step_2.1_lambda_4_elements2}]
	Thanks to Lemma~\ref{prop_step_2.1_lambda_4_elements}, if one of $p^{(1)}$,  $p^{(2)}$, $p^{(3)}$, $p^{(4)}$ belongs to both Case 1 and Case 2, we are done. So let us assume henceforth that this is not the case, which is to say, by Lemma~\ref{lemma_3cycle_in_both_cases_condition}, that for every $1\leq i \leq 4$,
	\begin{equation*}
		Q[p^{(i)}]\neq Q'[p^{(i)}].
	\end{equation*}
	Let us suppose, for a contradiction, that $p^{(1)}$ and $p^{(3)}$ belong to Case 2.
	
	The setting where $K$ and $Q$ are nowhere-zero except possibly on the set $\{(x,x):x\in\Lambda\}$ was already treated in \cite{mantelos2026determinantally} (and a contradiction was successfully obtained); so we do not assume this setting henceforth. 
	
	Because we are assuming that each $p^{(i)}$ belongs to only one Case, and more specifically, $p^{(2)}$ and $p^{(4)}$ to only Case 1, and $p^{(1)}$ and $p^{(3)}$ to only Case 2, Corollary~\ref{cor_impossible_zeroedge_shared_w_Case1_Case2} dictates that the only possible points in $\mathcal{M}^2$ on which $K$ can take on zero values are $(2,4)$, $(4,2)$, $(3,1)$ and $(1,3)$. Indeed, $(2,4)$ is the unique edge shared among $p^{(2)}$ and $(p^{(4)})'$, and $(3,1)$ that between $p^{(1)}$ and $p^{(3)}$. More specifically, by Corollary~\ref{cor_impossible_zeroedge_shared_w_Case1_Case2},
	\begin{enumerate}[(I)]
		\item $K(3,1)\neq0$, $K(1,3)\neq0$ and $K(2,4)= 0$, $K(4,2)\neq 0$, or 
		\item $K(3,1)=0$, $K(1,3)\neq0$ and $K(2,4)=0$, $K(4,2)\neq 0$, or
		\item $K(3,1)=0$, $K(1,3)\neq0$ and $K(2,4)\neq 0$, $K(4,2)\neq 0$, or
		\item $K(3,1)\neq 0$, $K(1,3)=0$ and $K(2,4)\neq0$, $K(4,2)\neq 0$, or
		\item $K(3,1)=0$, $K(1,3)\neq0$ and $K(2,4)\neq0$, $K(4,2)= 0$, or
		\item $K(3,1)\neq 0$, $K(1,3)=0$ and $K(2,4)\neq0$, $K(4,2)= 0$, or
		\item $K(3,1)\neq0$, $K(1,3)\neq0$ and $K(2,4)\neq 0$, $K(4,2)= 0$.
	\end{enumerate}
	By symmetry, it suffices to examine instances (I) and (II).
	
	Let us start by assuming (I). By Corollary~\ref{cor_impossible_zeroedge_shared_w_Case1_Case2} we then have the following setup.
	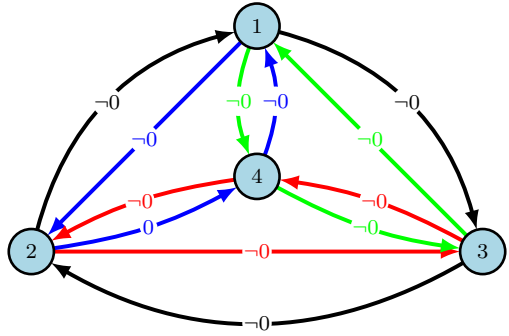
\begin{figure}[H]
		\centering
		\begin{tikzpicture}
			\Vertex[label=$2$]{A} \Vertex[x=6,label=$3$]{B}, \Vertex[x=3,y=3,label=$1$]{C} \Vertex[x=3,y=1,label=$4$]{D}
			\Edge[Direct,color=black,bend=30,label=$\neg 0$](A)(C)
			\Edge[Direct,color=blue,label=$\neg 0$](C)(A)
			\Edge[Direct,color=black,bend=30,label=$\neg 0$](C)(B)
			\Edge[Direct,color=green,label=$\neg 0$](B)(C)
			\Edge[Direct,color=black,bend=30,label=$\neg 0$](B)(A)
			\Edge[Direct,color=red,label=$\neg 0$](A)(B)
			\Edge[Direct,color=blue,bend=-20,label=$\neg 0$](D)(C)
			\Edge[Direct,color=green,bend=-20,label=$\neg 0$](C)(D)
			\Edge[Direct,color=blue,bend=-10,label=0](A)(D)
			\Edge[Direct,color=red,bend=-10,label=$\neg 0$](D)(A)
			\Edge[Direct,color=red,bend=-10,label=$\neg 0$](B)(D)
			\Edge[Direct,color=green,bend=-10,label=$\neg 0$](D)(B)
		\end{tikzpicture}
		\caption{In black is the $3$-cycle $p^{(1)}$ in reverse, in red is the $3$-cycle $\color{red} p^{(4)}$, in green is the $3$-cycle $\color{green} p^{(3)}$, and in blue is the $3$-cycle $\color{blue} p^{(2)}$.}
	\end{figure}
	Since $(2,4)$ is an edge of $p^{(2)}$ such that $K(2,4)=0$, and $p^{(2)}$ is a $3$-cycle belonging only to Case 1, Lemma~\ref{lemma_3cycle_only_one_Case} (i) dictates that $Q(2,4)=0$ also. It follows that
	\begin{equation*}
		K[q^{[2]}]=0=Q[q^{[2]}]\text{ and }K[q^{[3]}]=0=Q[q^{[3]}].
	\end{equation*}
	Moreover, by the usual application of Lemma~5.2 from \cite{mantelos2026determinantally} together with the fact that $p^{(1)}$ and $p^{(3)}$ both belong to Case 2,
	\begin{equation*}
		K[q^{[1]}]=\frac{K[p^{(1)}]K'[p^{(3)}]}{K(1,3)K(3,1)}=\frac{Q'[p^{(1)}]Q[p^{(3)}]}{Q(1,3)Q(3,1)}=Q'[q^{[1]}],
	\end{equation*}
	and similarly, $K'[q^{[1]}]=Q[q^{[1]}]$.
	
	Thus, by Lemma~6.6 from \cite{mantelos2026determinantally}, we have
	\begin{equation*}
		K'[q^{[2]}]+K'[q^{[3]}]=Q'[q^{[2]}]+Q'[q^{[3]}].
	\end{equation*}
	Furthermore, from the proof of Lemma~6.7 from \cite{mantelos2026determinantally}, 
	\begin{align*}
		K'[q^{[2]}]K'[q^{[3]}]&=K(1,3)K(3,1)\cdot K'[p^{(2)}]K[p^{(4)}]\\
		&=Q(1,3)Q(3,1)\cdot Q'[p^{(2)}]Q[p^{(4)}] &&\text{($p^{(2)}$ and $p^{(4)}$ belong to Case 1)}\\
		&=Q'[q^{[2]}]Q'[q^{[3]}].
	\end{align*}
	Therefore, either
	\begin{equation}
		\label{eqn_poss1}
		K'[q^{[2]}]=Q'[q^{[2]}]
	\end{equation}
	or
	\begin{equation}
		\label{eqn_poss2}
		K'[q^{[2]}]=Q'[q^{[3]}].
	\end{equation}
	Now, we also have, by the usual application of Lemma~5.2 from \cite{mantelos2026determinantally},
	\begin{align*}
		K'[q^{[2]}]&= \frac{K'[p^{(2)}]K'[p^{(3)}]}{K(1,4)K(4,1)}\\
		&= \frac{Q'[p^{(2)}]Q[p^{(3)}]}{K(1,4)K(4,1)} &&\text{($p^{(2)}$ belongs to Case 1, and $p^{(3)}$ to Case 2)}\\
		&= \frac{\cancel{Q(1,4)}Q(4,2)Q(2,1) Q(1,4)Q(4,3)Q(3,1)}{\cancel{Q(1,4)}Q(4,1)},
	\end{align*}
	Thus, if equation \eqref{eqn_poss1} is true, then
	\begin{equation*}
		\frac{\cancel{Q(4,2)}\cancel{Q(2,1)}Q(1,4)Q(4,3)Q(3,1)}{Q(4,1)}=Q(1,3)Q(3,4)\cancel{Q(4,2)}\cancel{Q(2,1)},
	\end{equation*}
	that is, $Q[p^{(3)}]=Q'[p^{(3)}]$, a contradiction.
	
	If, on the other hand, equation \eqref{eqn_poss2} is true, then
	\begin{equation*}
		\frac{\cancel{Q(4,2)}Q(2,1)\cancel{Q(1,4)}Q(4,3)\cancel{Q(3,1)}}{Q(4,1)}=\cancel{Q(1,4)}\cancel{Q(4,2)}Q(2,3)\cancel{Q(3,1)},
	\end{equation*}
	which is to say
	\begin{equation*}
		\begin{vmatrix}
			Q(2,3) & Q(2,1) \\
			Q(4,3) & Q(4,1)
		\end{vmatrix}=0,
	\end{equation*}
	a contradiction to our class $\mathcal{D}$ hypothesis.
	
	It remains to examine instance (II). By Corollary~\ref{cor_impossible_zeroedge_shared_w_Case1_Case2}, the setup is thus:
	\begin{figure}[H]
		\centering
		\begin{tikzpicture}
			\Vertex[label=$2$]{A} \Vertex[x=6,label=$3$]{B}, \Vertex[x=3,y=3,label=$1$]{C} \Vertex[x=3,y=1,label=$4$]{D}
			\Edge[Direct,color=black,bend=30,label=$\neg 0$](A)(C)
			\Edge[Direct,color=blue,label=$\neg 0$](C)(A)
			\Edge[Direct,color=black,bend=30,label=$\neg 0$](C)(B)
			\Edge[Direct,color=green,label=$0$](B)(C)
			\Edge[Direct,color=black,bend=30,label=$\neg 0$](B)(A)
			\Edge[Direct,color=red,label=$\neg 0$](A)(B)
			\Edge[Direct,color=blue,bend=-20,label=$\neg 0$](D)(C)
			\Edge[Direct,color=green,bend=-20,label=$\neg 0$](C)(D)
			\Edge[Direct,color=blue,bend=-10,label=0](A)(D)
			\Edge[Direct,color=red,bend=-10,label=$\neg 0$](D)(A)
			\Edge[Direct,color=red,bend=-10,label=$\neg 0$](B)(D)
			\Edge[Direct,color=green,bend=-10,label=$\neg 0$](D)(B)
		\end{tikzpicture}
		\caption{In black is the $3$-cycle $p^{(1)}$ in reverse, in red is the $3$-cycle $\color{red} p^{(4)}$, in green is the $3$-cycle $\color{green} p^{(3)}$, and in blue is the $3$-cycle $\color{blue} p^{(2)}$.}
	\end{figure}
	Since $(3,1)$ is an edge of $p^{(3)}$ -- a $3$-cycle assumed to belong only to  Case 2 -- the fact that $K(3,1)=0$ implies, by Lemma~\ref{lemma_3cycle_only_one_Case} (ii), that $Q(1,3)=0$ and $Q(3,1)\neq 0$.
	
	Since $(2,4)$ is an edge of $p^{(2)}$ -- a $3$-cycle assumed to belong only to Case 1 -- the fact that $K(2,4)=0$ implies, by Lemma~\ref{lemma_3cycle_only_one_Case} (i), that $Q(2,4)=0$.
	
	From $K(3,1)=0=Q(1,3)$ and $K(2,4)=0=Q(2,4)$, we get $$K[q^{[2]}]=K[q^{[3]}]=K'[q^{[3]}]=0=Q[q^{[3]}]=Q[q^{[2]}]=Q'[q^{[2]}].$$
	By Lemma~6.6 from \cite{mantelos2026determinantally}, this yields
	\begin{equation}
		\label{eqn_we_use_this_later}
		K[q^{[1]}]+K'[q^{[1]}]+K'[q^{[2]}]=Q[q^{[1]}]+Q'[q^{[1]}]+Q'[q^{[3]}].
	\end{equation}
	Since $K(1,3)\neq 0$ and $K(3,1)=0$, the second part of Lemma~\ref{graph_lemma_1} (i) yields
	\begin{align*}
		K'[q^{[1]}]&=K(3,4)K(4,3)\cdot K(4,1)K(1,4)\cdot\frac{K'[p^{(1)}]}{K'[p^{(3)}]} \\
		&= Q(3,4)Q(4,3)\cdot Q(4,1)Q(1,4)\cdot\frac{Q[p^{(1)}]}{Q[p^{(3)}]} &&\text{($p^{(1)}$ and $p^{(3)}$ belong to Case 2)}\\
		&=Q[q^{[1]}],
	\end{align*}
	where the last line follows from the first part of Lemma~\ref{graph_lemma_1} (i), since $Q(1,3)=0$ and $Q(3,1)\neq 0$.
	
	Moreover, since $K(2,4)=0$ and $K(4,2)\neq 0$, the first part of Lemma~\ref{graph_lemma_1} (ii) yields
	\begin{align*}
		K[q^{[1]}]&=K(4,1)K(1,4)\cdot K(1,2)K(2,1)\cdot \frac{K[p^{(4)}]}{K'[p^{(2)}]}\\
		&= Q(4,1)Q(1,4)\cdot Q(1,2)Q(2,1)\cdot \frac{Q[p^{(4)}]}{Q'[p^{(2)}]} &&\text{($p^{(2)}$ and $p^{(4)}$ belong to Case 2)}\\
		&=Q[q^{[1]}],
	\end{align*}
	where the last line follows from the first part of Lemma~\ref{graph_lemma_1} (ii), since $Q(2,4)=0$ and $Q(4,2)\neq 0$.
	
	By a simple application of the second part of Lemma~\ref{lemma_det_equivalence_w_easy_consequence}, we also have
	\begin{equation*}
		K[q^{[1]}]\cdot K'[q^{[1]}]=Q[q^{[1]}]\cdot Q'[q^{[1]}];
	\end{equation*} 
	from which $$Q[q^{[1]}]=Q'[q^{[1]}]$$
	follows. Consequently, equation \eqref{eqn_we_use_this_later} becomes
	\begin{equation*}
		K'[q^{[2]}]=Q'[q^{[3]}];
	\end{equation*}
	but this is precisely \eqref{eqn_poss2}, which had brought about a contradiction. \qed
\end{proof}

\bibliographystyle{plain}
\bibliography{DEF_structural_characterization_bib.bib}

@article{mantelos2026determinantally,
	title={Determinantally equivalent nonzero functions},
	author={Mantelos, Harry Sapranidis},
	journal={Discrete Mathematics},
	volume={349},
	number={6},
	pages={115021},
	year={2026},
	publisher={Elsevier}
}

@article{launay2021determinantal,
	title={Determinantal point processes for image processing},
	author={Launay, Claire and Desolneux, Agn{\`e}s and Galerne, Bruno},
	journal={SIAM Journal on Imaging Sciences},
	volume={14},
	number={1},
	pages={304--348},
	year={2021},
	publisher={SIAM}
}

@book{kulesza2012learning,
	title={Learning with determinantal point processes},
	author={Kulesza, John A},
	year={2012},
	publisher={University of Pennsylvania}
}

@article{loewy1986principal,
	title={Principal minors and diagonal similarity of matrices},
	author={Loewy, Raphael},
	journal={Linear algebra and its applications},
	volume={78},
	pages={23--64},
	year={1986},
	publisher={Elsevier}
}

@article{equiv_symm_kernels_for_dpps,
	title={Equivalent symmetric kernels of determinantal point processes},
	author={Stevens, Marco},
	journal={Random Matrices: Theory and Applications},
	volume={10},
	number={03},
	pages={2150027},
	year={2021},
	publisher={World Scientific}
}
\end{document}